\newtheorem{Theorem}{Theorem}[section]
\newtheorem{Lemma}[Theorem]{Lemma}
\newtheorem{Proposition}[Theorem]{Proposition}
\newtheorem{claim}[Theorem]{Claim}
\newcommand{\cA}{{\mathcal A}}
\newcommand{\cB}{{\mathcal B}}
\newcommand{\cC}{{\mathcal C}}
\newcommand{\cE}{{\mathcal E}}
\newcommand{\cF}{{\mathcal F}}
\newcommand{\cN}{{\mathcal N}}
\newcommand{\cQ}{{\mathcal Q}}
\newcommand{\cT}{{\mathcal T}}
\newcommand{\cU}{{\mathcal U}}
\newcommand{\cV}{{\mathcal V}}
\newcommand{\cW}{{\mathcal W}}
\newcommand{\bbC}{{\mathbb C}}
\newcommand{\bbD}{{\mathbb D}}
\newcommand{\bbE}{{\mathbb E}}
\newcommand{\bbN}{{\mathbb N}}
\newcommand{\bbP}{{\mathbb P}}
\newcommand{\bbR}{{\mathbb R}}
\newcommand{\bbZ}{{\mathbb Z}}
\newcommand{\e}{\varepsilon}
\newcommand{\s}{\sigma}
\newcommand{\z}{\zeta}
\newcommand{\D}{\Delta}
\newcommand{\G}{\Gamma}
\newcommand{\tc}{\mid}
\begin{document}
\begin{frontmatter}

\title{Universality in one-dimensional hierarchical coalescence processes\thanksref{T1}}
\runtitle{1D Hierarchical Coalescence Processes}

\thankstext{T1}{Supported by the European Research Council through the ``Advanced Grant'' PTRELSS 228032.}

\begin{aug}
\author[A]{\fnms{Alessandra} \snm{Faggionato}\ead[label=e1]{faggiona@mat.uniroma1.it}},
\author[B]{\fnms{Fabio} \snm{Martinelli}\corref{}\ead[label=e2]{martin@mat.uniroma3.it}},
\author[C]{\fnms{Cyril} \snm{Roberto}\ead[label=e3]{cyril.roberto@univ-mlv.fr}}\\
and
\author[D]{\fnms{Cristina} \snm{Toninelli}\thanksref{t2}\ead[label=e4]{cristina.toninelli@upmc.fr}}

\runauthor{Faggionato, Martinelli, Roberto and Toninelli}

\affiliation{Universita ``La Sapienza'', Universita Roma Tre,
Universit\'{e} de Marne-la-Vall\'{e}e, and Universit\'{e} de Paris
VI-VII and CNRS}

\address[A]{A. Faggionato\\
Dipartimento Matematica ``G. Castelnuovo''\\
Universita ``La Sapienza''\\
P. le Aldo Moro 2\\
00185 Roma\\
Italy\\
\printead{e1}}
\address[B]{F. Martinelli\\
Dipartimento Matematica\\
Universita Roma Tre\\
Largo S. L. Murialdo 00146\\
Roma\\
Italy\\
\printead{e2}\hspace*{6.32pt}}
\address[C]{C. Roberto\\
L.A.M.A.\\
Universit\'{e} de Marne-la-Vall\'{e}e\\
5 bd Descartes 77454 Marne-la-Vall\'{e}e\\
France\\
\printead{e3}}
\address[D]{C. Toninelli\\
L.P.M.A. and CNRS-UMR 7599\\
Universit\'{e} de Paris VI-VII\\
4 Pl. Jussieu 75252, Paris\\
France\\
\printead{e4}}
\end{aug}

\thankstext{t2}{Supported by the
French Ministry of Education through the Grant ANR BLAN07-2184264 and
ANR-2010-BLAN-0108.}

% HISTORY:
\received{\smonth{7} \syear{2010}}
\revised{\smonth{2} \syear{2011}}

% ABSTRACT
%
\begin{abstract}
%C vari cambi in tutto il paragrafo
Motivated by several models introduced in the physics literature to
study the nonequilibrium coarsening dynamics of one-dimensional
systems, we consider a large class of ``hierarchical coalescence
processes'' (HCP). An HCP consists of an infinite sequence of
coalescence processes $\{\xi^{(n)}(\cdot)\}_{n\ge1}$: each process
occurs in a different ``epoch'' (indexed by $n$) and evolves for an
infinite time, while the evolution in subsequent epochs are
linked in such a way that the initial distribution of $\xi^{(n+1)}$
coincides with
the final distribution of $\xi^{(n)}$. Inside each epoch the process,
described by a suitable simple point process representing the
boundaries between adjacent intervals (domains), evolves as follows. Only
intervals whose length belongs to a certain epoch-dependent finite
range are
\textit{active}, that is, they can
%C merge with their left or right neighboring interval
%with quite general rates. Inactive intervals are frozen.
incorporate their left or right neighboring interval with quite general
rates. Inactive intervals cannot incorporate their neighbors and can
increase their length only if they are incorporated by active
neighbors. The activity ranges are such that after a merging step the
newly produced interval always becomes inactive for that epoch but
active for some future epoch.

Without making any mean-field assumption we show that: (i) if the
initial distribution describes a renewal process, then such a property
is preserved at all later times and all future epochs; (ii) the
distribution of certain rescaled variables, for example, the domain
length, has a well-defined and universal limiting behavior as $n\to
\infty$ independent of the details of the process (merging rates,
activity ranges$,\ldots$). This last result explains the universality
in the
limiting behavior of several very different physical systems (e.g., the
East model of glassy dynamics or the Paste-all model) which was
observed in several simulations and analyzed in many physics papers.
%C
The main idea to obtain the asymptotic result is to first write down a
recursive set of nonlinear identities for the Laplace transforms of
the relevant quantities on different epochs and then to solve it by
means of a
%C clever
transformation which in some sense linearizes the system.
\end{abstract}

% KEYWORDS
%
\begin{keyword}[class=AMS]
\kwd{60G55}
\kwd{60B10}
\kwd{82C24}.
\end{keyword}
\begin{keyword}
\kwd{Coalescence process}
\kwd{simple point process}
\kwd{renewal process}
\kwd{universality}
\kwd{nonequilibrium dynamics}.
\end{keyword}

\end{frontmatter}

\tableofcontents[level=2]

%s1 ###
\section{Introduction}
\label{intro}
There are several situations arising in one-dimensional physics in
which the nonequilibrium evolution of the system is dominated by the
coalescence of certain domains or droplets characterizing the
experiment (e.g., large vapor droplets
in breath figures or ordered domains in Ising and
Potts models at zero temperature) which leads to interesting
coarsening phenomena. As pointed out in the physics literature a
common feature of these phenomena is the appearance of a
scale-invariant morphology for large times. Many models, even very
simple ones, have
been proposed in order to capture and explain such a behavior (see, e.g.,
\cite{DBG,DGY1,DGY2} and
\cite{Pr}). Supported by computer simulations and under the key assumption
of a well-defined limiting behavior under suitable rescaling,
physicists have
derived some nontrivial limiting distributions for the relevant quantities.

In many cases the coalescence process dominating
the time evolution has a hierarchical structure which can,
informally, be described as follows.

Assume for simplicity that the state of the
system is described by an infinite sequence of adjacent intervals
(``domains'' in the physics language) with varying length and that
its time evolution is governed by the merging of two
consecutive intervals. Then there exist infinitely many
\textit{epochs} and in the $n$th epoch only those domains whose length belongs
to a suitable epoch-dependent characteristic range are
%C
\textit{active} (or, better, $n$-active); that is, they can
%C merge with
incorporate their left or right neighbor interval with certain
(bounded) rates which could depend on the epoch and on the length of
the domain. Each epoch lasts a very long (mathematically infinite)
time so that at the end of the epoch there are no longer $n$-active
intervals, provided that the total merging rate is strictly positive
for any $n$-active domain. Then the next epoch takes over and the
process is repeated. Clearly, in order for the successive
coalescences to be able to eliminate domains created by previous
epochs and therefore to increase the domain length, some assumptions
about the active ranges should be made. If the $n$th active range
is the interval $[d_{\min}^{(n)},d_{\max}^{(n)})$, then we
require that $d_{\max}^{(n)}=d_{\min}^{(n+1)}$.

An interesting and highly nontrivial example of a hierarchical
coalescence process (HCP in the sequel) is represented by the high
density (or low temperature) nonequilibrium dynamics of the
\textit{East model} after a deep quench from a normal density state (see
\cite{EJ,SE} for physics motivations and discussions and
\cite{FMRT1} for a mathematical analysis).
%C
The East model is a well-known example of kinetically constrained
stochastic particle system with site exclusion which evolves
according to a Glauber dynamics submitted to the following
constraint: the $0/1$ occupancy variable at a given site $x\in
\bbZ$ can change only if the site $x+1$ is empty. In this case, if a
domain represents a~maximal sequence of consecutive occupied sites,
and if the particle density is very high, then the characteristic
range of the length of active domains for the $n$th epoch is
$[2^{n-1},2^n)$, and active domains can only merge with their left
neighbor. Notice that
%C
with this choice for the active range the merging of two $n$-\textit
{active} domains automatically
produces a $n$-\textit{inactive} domain. This is a
technical feature that will always be supposed true throughout the paper.

Another interesting HCP is given by the \textit{Paste-all model}
\cite{DGY2} which was devised to model breath figures formed by
coalescing droplets in one dimension. In this case all the domains are
sub-intervals of the integer
lattice, the $n$-\textit{active} length interval is $\{n\}$, and active domains
merge with their left/right neighbor with rate one.

In~\cite{SE} the authors, under the \textit{assumption} that the
scaled domain length has a well-defined limiting behavior as $n\to
\infty$, computed the exact form of the limiting distribution
%C
for the above defined HCP corresponding to East (see Section C
of~\cite{SE}).
%C
Under a
finite mean hypothesis they find that the limiting behavior
is exactly the same as the one computed in~\cite{DGY2} (always
assuming the limiting behavior and the mean filed hypothesis) for the
Paste-all model, a fact
that they describe as ``surprising.''

Our main result, stated in Theorem \ref{teo2},
solves completely this \textit{enigma}. In fact, %C
without making any mean field hypothesis, we:
\begin{longlist}[(a)]
\item[(a)]{prove the existence of a well-defined limiting behavior
which is independent of the various merging rates};
\item[(b)]{classify the
limiting distribution according to the initial one (i.e., the
distribution at the beginning
of the first epoch)}.
\end{longlist}
Slightly more precisely the main content
of our contribution can be formulated as follows. Let $\xi$ denote the
random set of the
separation points between the domains (domain walls in physics
jargon). Then, under very general assumptions on
the merging rates and on the active ranges but always assuming
$d_{\max}^{(n)}=d_{\min}^{(n+1)}$ for each $n$:
\begin{longlist}
\item if at the beginning of the first
epoch $\xi$ is described by a renewal point process
(as implicitly done in the physics papers), then the same property
holds for all times and all epochs;
\item if $Z^{(n)}$ denotes the
domain length at the beginning of the $n$th epoch rescaled by a
factor $1/d_{\min}^{(n)}$, and if $g^{(n)}(\cdot)$ denotes its
Laplace transform,
then $g^{(n)}\to g_{c_0}^{(\infty)}$ where
%
%e1.1 ###
%
\begin{equation}\label{tamponegola1}
g^{(\infty)}_{c_0}(s)= 1- \exp\biggl\{ - c_0 \int_1 ^\infty
\frac{e^{-sx}}{x} \,dx \biggr\},
\end{equation}
provided\vspace*{2pt} that $\lim_{s\downarrow0}
-s \,\frac{d}{ds}\,g^{(1)}(s)/(1-g^{(1)}(s))=c_0$ (necessarily $c_0\in[0,1]$).
Moreover, the above limit exists with
$c_0 =1$ when starting with a stationary renewal point
process (which has therefore a finite mean).
If instead the initial law is in the domain of attraction of
an ${\alpha}$-stable law with ${\alpha}\in(0,1)$, then
%C
the limit exists with $c_0={\alpha}$.
\end{longlist}
The above results, which can be generalized to exchangeable point
processes, explain clearly why apparently very different physical
systems (i.e., with different merging rates and/or active ranges) show
the same asymptotic behavior.

We
want to stress here the crucial ideas behind the proof of our limit
theorem. The first step goes as follows. Inspired by the form of the
limiting distribution found by
the physicists, one uses the theory of
complete monotone functions and Laplace transform,
to show that for each $n$ there exists a~nonnegative Radon measure
$t^{(n)}$ on $(0,\infty)$ such that the Laplace transform for the
$n$th epoch, $g^{(n)}$, can be written as
%
%e1.2 ###
%
\begin{equation}\label{tamponegola2}
g^{(n)} (s)=1- \exp\biggl\{ - \int_{[1,\infty)} \frac{e^{-sx}}{x}
t^{(n)} (dx) \biggr\} .
\end{equation}
Then one observes that the Laplace transforms $\{g^{(n)}\}_{n\ge
1}$ must satisfy a~nonlinear and highly nontrivial recursive system of
identities which, thanks to step one, translate into recursive
identities for the
measures $t^{(n)}$. In turn the latter can be solved to express the
measure $t^{(n)}$
in terms of $t^{(1)}$ in a~simple form. Finally, the explicit
form of $t^{(n)}$ allows us to pass to the limit $n\to\infty$
in the recursive identities and prove the main result.

Coalescence processes (also called coagulation or aggregation
processes) and their time-reversed analog given by fragmentation
processes have also been recently much studied in the mathematical
literature with different motivations and from different points of
view (see, e.g.,~\cite{A,Be} and references therein). Most of
the mathematical research focused on models with a certain
\textit{mean-field} character (i.e., the spatial position of the
coalescing objects does not play any role) with some exceptions (see,
e.g.,~\cite{Be2} and~\cite{LS}). Although our model shows indeed a mean-field
nature (see, e.g., Remark \ref{meanfield}) due to the fact that the
domain wall process $\xi$ is a renewal process or exchangeable at any future
time $t$ if it was so at time $t=0$, we have been able to explore
some dynamical aspects of the HCP for which the geometrical
alignment of the domains is relevant (see Section \ref{fondamenta}).

We conclude by mentioning that in~\cite{FMRT2} the methods developed
here have been successfully applied to other HCPs, where a domain
can also coalesce with both its neighboring domains as in
\cite{BDG}. In this class a particular interesting case is
represented by the model in which (roughly) the smallest interval
merges with its two neighbors. In the mean-field approximation and
by forgetting how much time elapses between and during the
merging events, one can derive a time evolution equation for the
domain size distribution in which the time variable $t$ is a
continuous approximation of the discrete label~$n$ of the epochs.
This equation has been rigorously analyzed in~\cite{GM}
(see also~\cite{Pe} for an interesting review) by means of nonlinear analysis
techniques.

%%%%%%%%%%%%%%%%%%%%%%%%%%%%%%%%%%%%%%%%%%%%%%%%%%%%%%%%%%%%%%%%%%%%%%%%%%%%%%%%%%%%%%%%%%%%%%%%%%%%%%%%%%
%%%%%%%%%%%%%%%%%%%%%%%%%%%%%%%%%%%%%%%%%%%%%%%%%%%%%%%%%%%%%%%%%%%%%%%%%%%%%%%%%%%%%%%%%%%%%%%%%%%%%%%%%%

%s2 ###
\section{Model and results}
\label{annarella}

In this section we introduce the main objects of our analysis,
namely the \textit{simple point processes}, the \textit{one-epoch
coalescence processes} and the \textit{hierarchical coalescence
processes}. Then we expose our main results. We start by recalling
some basic notions of
simple point processes, referring to~\cite{DV} and~\cite{FKAS} for a
detailed treatment.

%s2.1 ###
\subsection{Simple point processes}

We denote by $\cN$ the family of locally finite subsets $\xi
\subset\bbR$. $\cN$ is a measurable space endowed with the
$\s$-algebra of measurable subsets generated by
\[
\{ \xi\in\cN\dvtx|\xi\cap A_1|=n_1 ,\ldots, |\xi\cap A_k |=n_k
\} ,
\]
$A_1,\ldots, A_k$ being bounded Borel sets in $\bbR$ and
$n_1,\ldots, n_k \in\bbN$. On $\cN$ one can define a metric such
that the above measurable subsets correspond to the Borel sets
\cite{MKK}.
% A \textit{simple point process} (shortly,
%SPP) is a measurable map $\xi: {\Omega}\rightarrow\cN$ from a
%probability space ${\Omega}$ to the measure space $\cN$.
We call \textit{domains} the intervals $[x,x']$ between
nearest-neighbor points $x,x'$ in $\xi\cup\{-\infty, +\infty\} $.
Note that the existence of the domain $[-\infty,x']$ corresponds to
the fact that $\xi$ is bounded from the left and its leftmost point is
given by $x'$.
A similar consideration holds for $[x, \infty]$. Points of $\xi$ are
also called \textit{domain separation points}. Given a point $x\in
\bbR$, we define
\[
d_x^\ell:= \inf\{ t > 0 \dvtx x-t \in\xi\} ,\qquad
d_x ^r := \inf\{ t > 0 \dvtx x+t \in\xi\}
\]
with the convention that the infimum of the empty set is $\infty$.
Note that if $x \in\xi$, then $d_x^\ell$ ($d_x^r$) is simply the
length of the domain to the left (right) of~$x$.
% (see figure \ref{neigh}).

% \begin{figure}[ht]
% % \psfrag{x}{$x$}
% % \psfrag{y}{$x'$}
% % \psfrag{d}{$d_x^\ell$}
% % \psfrag{e}{$d_x^r$}
% \includegraphics[width=.80\columnwidth]{fig-domains}
% \caption{Example of a locally finite subset $\xi$ of $\mathbb{R}$.}
% \label{neigh}
% \end{figure}

% \begin{figure}[ht]
% \psfrag{x}{$x$}
% \psfrag{y}{$x'$}
% \psfrag{d}{$d_x^\ell$}
% \psfrag{e}{$d_x^r$}
% \includegraphics[width=.80\columnwidth]{fig-domains.eps}
% \caption{Example of a locally finite subset $\xi$ of $\mathbb{R}$.}
% \label{neigh}
% \end{figure}

%We convey to enumerate the points in $\xi$ in increasing order with
%the following rule: (i) if $\xi$ is bounded from the left then the
%first point on the left is labeled as $x_0$, (ii) if $\xi$ is
%unbounded from the left and bounded from the right, then $x_0$ is
%the first point on the right, (iii) if $\xi$ is unbounded from the
%left and from the right, then $x_0 \leq0 < x_1$.
%

% In what follows, some alternative enumeration will be useful.
%In particular, we introduce
%
% y_k : \cN\rightarrow\bbR\cup\{-\infty, \infty\} , k
% \in\bbZ,
% \end{equation}
% uniquely defined by requiring the following: (i) if $k<k'$ then $y_k
%<y_{k'}$ if at least one value is finite, (ii)
% $y_0 \leq0 < y_1 $.

We recall that a simple point process (shortly, SPP) is any
measurable map from a probability space to the measurable space
$\cN$. With a slight abuse of notation we will denote the realization
of a SPP by $\xi$ while we will usually denote by $\cQ$ its law on
the measurable space
$\cN$. In what
follows $\bbN$ ($\bbN_+$) will denote the set of nonnegative
(positive) integers.
\begin{definition}
(i) We say that a SPP $\xi$ is \textit{left-bounded} if it has a~leftmost point and
has infinite cardinality.
%In this case we enumerate the points of $\xi$ as $\{x_k; \ k \in\bbN
%

\mbox{\hphantom{i}}(ii) We say that a SPP $\xi$ is \textit{$\bbZ$-stationary} if $\xi
\subset\bbZ$ and its law $\cQ$ is
invariant by $\bbZ$-translations, that is, if for any $x \in\bbZ$ the
random set $\xi-x$ has law $\cQ$.

(iii) We say that a SPP $\xi$ is \textit{stationary} if its law $\cQ$ is
invariant under $\bbR$-translations,
that is, if for any $x \in\bbR$ the random set $\xi-x$ has law $\cQ$.
\end{definition}

Thanks to Theorem 1.2.2 in~\cite{FKAS} and its adaptation to the
lattice case, if $\xi$ is $\bbZ$-stationary or
stationary, then a.s. the following dichotomy holds: $\xi$ is
unbounded from the left and from the
right, or $\xi$ is empty. In the sequel we will always assume the
first alternative to hold a.s., and we will write $ \xi=\{x_k\dvtx k \in
\bbZ\}$ with the rules: $x_0 \leq0 < x_1$ and
$x_k < x_{k+1}$ for all $k \in\bbZ$. In the case of a left-bounded
SPP, we enumerate the points of $\xi$ as $\{x_k; k \in\bbN\}$ in
increasing order.
\begin{remark}
If $\xi$ is
$\bbZ$-stationary and a.s. nonempty, then \mbox{$\cQ( 0 \in\xi)>0$}, and
therefore the conditional probability $ \cQ( \cdot| 0 \in\xi)$ is
well defined. On the other hand, if $\xi$ is stationary, then $\cQ(0
\in\xi)=0$, the above conditional probability is therefore not
well defined and has to be replaced by the Palm distribution
associated to $\cQ$~\cite{DV,FKAS}.
We recall that, given the law $\cQ$ of a~stationary SPP with
finite intensity
\[
\lambda_{\cQ}:=\bbE_{\cQ} ( |\xi\cap[0,1]|)
\]
and such that
$\xi$ is nonempty $\cQ$-a.s., the Palm distribution $\cQ_0$
associated to~$\cQ$ is defined as the probability measure on the
measurable space $\cN$ such that
\[
\cQ_0( A)= (1/\lambda_{\cQ}) \bbE_{\cQ}
(
|\{ x \in\xi\cap[0,1] \dvtx{\tau}_x \xi\in A \}
| ) \qquad \forall A \subset\cN\mbox{ measurable}
\]
(see Section 1.2.1 in~\cite{FKAS}). Trivially, $\cQ_0$ has support in
%
%e2.1 ###
%
\begin{equation}\label{camilloha3figlie}
\cN^\infty_0:=\{\xi\in\cN\dvtx0\in\xi, | \xi\cap
(-\infty,0] |= | \xi\cap[0,\infty) |=\infty\} .
\end{equation}
Moreover, $\cQ_0$ uniquely determines the law $\cQ$ since it
holds that
%
%e2.2 ###
%
\begin{equation}\label{scimmiazimbo}
\bbE_\cQ[ f(\xi)
] = \lambda_{\cQ} \bbE_{\cQ_0}\biggl[ \int_0 ^{x_1(\xi) } f(
\xi-t )\,dt \biggr]
\end{equation}
for any nonnegative measurable function $f$ on $\cN$ (cf. Theorem
1.2.9 in~\cite{FKAS}, Theorem 12.3.II in~\cite{DV}). Notice that,
by taking $f=1$, one gets ${\lambda}_{\cQ}= 1/ \bbE_{\cQ_0} (x_1)$.
Consider now the space $(0,\infty)^\bbZ$ endowed with the product
topology with Borel measurable sets. Setting $d_k(\xi)= x_k
(\xi)-x_{k-1}(\xi)$ for $k\in\bbZ$ and $\xi
\in\cN_0^\infty$, the map $ \cN^\infty_0 \ni\xi\to
(0,\infty)^\bbZ$ is a measurable injection, with measurable image. In
particular, the Palm
distribution can be thought of as a probability measure on
$(0,\infty)^\bbZ$. As stated in Theorem 1.3.1 in~\cite{FKAS}, a
probability measure $Q$ on $(0,\infty)^\bbZ$ is the Palm distribution
associated to a stationary SPP with finite intensity and a.s. nonempty
configurations
if and only if $Q$ is shift invariant, and its marginal distributions
have finite
mean.
\end{remark}

%Cfor non le usaavmo mai
%Sometimes we will write
%$X_-(\xi)$ and $X_+(\xi)$ for the random variables representing the
%points $x_0,x_1$ of $\xi$ respectively.

We now describe the main classes of SPPs we are interested in.
\begin{definition}\label{zeroassoluto}
Given a probability measure $\mu$ on $(0,\infty)$, we say that~$\xi$
is a \textit{renewal SPP containing the origin and with interval
law $\mu$}, and write $\cQ= \operatorname{Ren}(\mu\tc0)$, if:
\begin{longlist}
\item$0 \in\xi$;
\item$\xi$ is unbounded from the left and from the right and,
labeling the points in increasing order with $x_0=0$, the random
variables $d_k= x_k-x_{k-1}$, $k \in\bbZ$, are i.i.d. with common
law $\mu$.
\end{longlist}
\end{definition}
%
% \begin{figure}[ht]
% \psfrag{x0}{$\boxed{\mathbf{x_0=0}}$}
% \psfrag{x1}{$x_1$}
% \psfrag{x2}{$x_2$}
% \psfrag{x3}{$x_3$}
% \psfrag{x-1}{$x_{-1}$}
% \psfrag{x-2}{$x_{-2}$}
% \psfrag{d-1}{$d_{-1}$}
% \psfrag{d0}{$d_{0}$}
% \psfrag{d1}{$d_{1}$}
% \psfrag{d2}{$d_{2}$}
% \includegraphics[width=.80\columnwidth]{renewal0.eps}
% \caption{Example of a realization $\xi$ of a renewal SPP containing
% the origin.}
% %\label{neigh}
% \end{figure}
%
% \begin{figure}[ht]
% % \psfrag{x0}{$\boxed{\mathbf{x_0=0}}$}
% % \psfrag{x1}{$x_1$}
% % \psfrag{x2}{$x_2$}
% % \psfrag{x3}{$x_3$}
% % \psfrag{x-1}{$x_{-1}$}
% % \psfrag{x-2}{$x_{-2}$}
% % \psfrag{d-1}{$d_{-1}$}
% % \psfrag{d0}{$d_{0}$}
% % \psfrag{d1}{$d_{1}$}
% % \psfrag{d2}{$d_{2}$}
% \includegraphics[width=.80\columnwidth]{renewal0}
% \caption{Example of a realization $\xi$ of a renewal SPP containing
% the origin.}
% %\label{neigh}
% \end{figure}
%
%Usually, $\otimes_{i\in\bbZ}\mu$ would denote the product
%probability measure on the double--sided sequences $(d_i)_{i \in
%is uniquely determined by the sequence of its domain lengths, we
%transpose the same notation to SPPs.
%
\begin{definition}\label{olivietta}
Given probability measures $\nu$ and $\mu$ on $\bbR$ and $(0, \infty
)$, respectively, we say that $\xi$ is
a \textit{right renewal} SPP with first point law $\nu$ and interval law
$\mu$, and write $\cQ= \operatorname{Ren}(\nu, \mu)$, if:
\begin{longlist}
\item $\xi= \{x_k , k \in\bbN\} $ is a left-bounded SPP;
\item the first point $x_0$ has law $\nu$;
\item $ d_k = x_k -x_{k-1}$ ($k \in\bbN_+$)
has law $\mu$;
\item
the random variables $x_0, \{d_k\}_{k \in\bbN_+}$ are independent.
\end{longlist}
\end{definition}
%
% \begin{figure}[ht]
% \psfrag{x0}{$x_0$}
% \psfrag{x1}{$x_1$}
% \psfrag{x2}{$x_2$}
% \psfrag{x3}{$x_3$}
% \psfrag{x4}{$x_{4}$}
% \psfrag{d1}{$d_{1}$}
% \psfrag{d2}{$d_{2}$}
% \psfrag{d3}{$d_{3}$}
% \psfrag{d4}{$d_{4}$}
% \psfrag{d}{$\boxed{\mbox{distributed according to } \nu}$}
% \includegraphics[width=.80\columnwidth]{rightrenewal.eps}
% \caption{Example of a realization $\xi$ of a right renewal SPP.}
% %\label{neigh}
% \end{figure}
%
% \begin{figure}[ht]
% % \psfrag{x0}{$x_0$}
% % \psfrag{x1}{$x_1$}
% % \psfrag{x2}{$x_2$}
% % \psfrag{x3}{$x_3$}
% % \psfrag{x4}{$x_{4}$}
% % \psfrag{d1}{$d_{1}$}
% % \psfrag{d2}{$d_{2}$}
% % \psfrag{d3}{$d_{3}$}
% % \psfrag{d4}{$d_{4}$}
% % \psfrag{d}{$\boxed{\mbox{distributed according to } \nu}$}
% \includegraphics[width=.80\columnwidth]{rightrenewal}
% \caption{Example of a realization $\xi$ of a right renewal SPP.}
% %\label{neigh}
% \end{figure}
%
\begin{definition}\label{levico}
Given a probability measure $\mu$ on $\bbN_+$ with finite mean, we
say that $\xi$ is a \textit{$\bbZ$-stationary renewal SPP with
interval law $\mu$}, and\vadjust{\goodbreak} write $\cQ=\operatorname{Ren} _{\bbZ} (\mu)$,
if:
\begin{longlist}
\item $\xi$ is $\bbZ$-stationary and a.s. nonempty;
\item w.r.t. the conditional probability $\cQ(\cdot|0
\in\xi)$ the random variables $d_k= x_k-x_{k-1}$, $k \in\bbZ$, are
i.i.d. with common law $\mu$.
\end{longlist}
\end{definition}

A basic example is the following. Consider a Bernoulli product measure
on $\{0,1\}^\mathbb{Z}$ with parameter $p$.
Any realization $(X_i)_{i\in\mathbb{Z}}$ can be identified with the
subset $\xi=\{i \in\mathbb{Z} \dvtx X_i=1\}$. The resulting SPP is a
$\bbZ$-stationary renewal SPP with geometric interval law.
\begin{remark}
As proven in Appendix \ref{puntini}, given a
probability measure~$\mu$ on~$\bbN_+$, the law
$\cQ=\operatorname{Ren}_{\bbZ}(\mu)$ is well defined iff $\mu$ has
finite mean.
Other properties of $\bbZ$-stationary renewal SPPs are also discussed
there.
\end{remark}
\begin{definition}\label{fataturchina}
Given a probability measure $\mu$ on $(0,\infty)$ with finite mean, we
say that $\xi$ is a \textit{stationary renewal SPP with interval law
$\mu$}, shortly $\xi=\operatorname{Ren} (\mu)$, if:
\begin{longlist}
\item $\xi$ is a stationary SPP with finite intensity and $\xi$
is nonempty a.s.;
\item the random
variables $d_k= x_k-x_{k-1}$, $k \in\bbZ$, are i.i.d. with common
law $\mu$ w.r.t. the Palm distribution associated to $\cQ$.
\end{longlist}
\end{definition}

A classical example of stationary renewal SPP is given by the
homogeneous Poisson point process, for which the interval law is an
exponential.

\begin{remark}
A stationary renewal SPP with interval law $\mu$ having infinite
mean cannot exist (see Proposition 4.2.I in~\cite{DV}). As
discussed after Theorem 1.3.4 in~\cite{FKAS}, $\cQ=\operatorname{Ren}
(\mu)$
if and only if the following holds: the random variables
$d_k=x_k-x_{k-1}$, $k\not=1$, are i.i.d. with law $\mu$ and are
independent from the random vector $(x_0,x_1)$, which satisfies
%
%e2.3 ###
%
\begin{eqnarray}
\cQ( -x_0> u, x_1> v) = {\lambda} _\cQ\int_{u+v}^\infty\bigl( 1-
F(t)\bigr) \,dt ,\nonumber\\[-8pt]\\[-8pt]
&&\eqntext{F(t):= \mu((0,t]) ,
u,v>0 .}
\end{eqnarray}
\end{remark}

We conclude with the definition of two large classes of
``exchangeable'' point processes.
\begin{definition}\label{baratto}
We say that $\xi$ is a \textit{left-bounded exchangeable SPP containing the
origin} if:
\begin{longlist}
\item$\xi=\{x_k, k\in\bbN\}$ is a left-bounded SPP containing the
origin;
\item$\cQ$, thought of as probability measure on $(0,\infty)^{\bbN
_+}$ by the map $\xi\to
( x_k-x_{k-1}\dvtx k \in\bbN_+)$, is exchangeable (i.e.,
invariante under permutations~\cite{D,K}).
\end{longlist}
\end{definition}
\begin{definition}\label{scambioequo}
We say that $\xi$ is a \textit{stationary exchangeable SPP} if:
\begin{longlist}
\item $\xi$ is a stationary SPP with finite intensity and $\xi$
is nonempty
a.s.;
\item the Palm distribution $\cQ_0$, thought of as probability
measure on $(0,\infty)^\bbZ$
by the map $\xi\to( x_{k}-x_{k-1}\dvtx k \in\bbZ)$, is
exchangeable.
\end{longlist}
\end{definition}
\begin{remark}
%Cfor Any left-bounded, stationary or renewal SPP is also
Any left-bounded or stationary renewal SPP is also
exchangeable.
\end{remark}
%
%As discussed in Appendix \ref{puntini}, an
%exchangeable SPP is a convex combination of stationary renewal SPPs.

%s2.2 ###
\subsection{One-epoch coalescence process} \label{secone-epoch}
We describe here the class of coalescence processes
which will represent the modular unity of the, yet to be defined,
hierarchical coalescence process (HCP).
For a reason that will become clear in the next section, we call it
\textit{one-epoch coalescence process} (OCP).

This process depends on two constants $0<d_{\min}<d_{\max}$ and
on nonnegative bounded functions ${\lambda}_\ell, {\lambda}
_r$ defined on $[d_{\min} , \infty]$ which, with $ {\lambda} (d):=
{\lambda}_\ell
(d)+ {\lambda}_r (d)$, satisfy the following assumptions:
\begin{longlist}[(A2)]
\item[(A1)] ${\lambda}(d) >0$ if and only if $d \in[d_{\min},
d_{\max})$;
\item[(A2)] if $d,d' \geq d_{\min}$, then $d+d'\geq d_{\max}$.
\end{longlist}
Trivially, (A2) is equivalent to the bound $2d_{\min}\geq d_{\max}$.

The one-epoch
coalescence process is a Markov process with state space $ \cN
(d_{\min})$ given by the configurations $\xi\in\cN$ having only domains
of length not smaller than $d_{\min}$, that is,
%
%e2.4 ###
%
\begin{equation}\label{scudetto} \cN(d_{\min})= \{\xi\in\cN\dvtx
d_x^{ \ell} \geq d_{\min} , d_x^{ r} \geq d_{\min}\mbox{ }
\forall x \in
\xi\} .
\end{equation}
The stochastic evolution is given by a jump dynamics
with
c\`{a}dl\`{a}g paths $( \xi(t) \dvtx t \geq0 )$ in the
Skohorod space $D([0,\infty),
\cN(d_{\min}) )$ (cf.~\cite{B}), and at each jump a point is
removed. Formally, the Markov generator of
the coalescence process is given by
%
%e2.5 ###
%
\begin{equation}\label{pescedaprile} Lf (\xi) = \sum_{x \in\xi}
\bigl( {\lambda}_\ell(d_x^{ \ell} )+
{\lambda}_r (d_x ^{ r} )\bigr) [ f( \xi\setminus\{x\})-
f(\xi)
] .
\end{equation}
We will write $\bbP_{\cQ}$ for the law on
$D([0,\infty), \cN(d_{\min}) )$ of the one-epoch coalescence
process with initial law $\cQ$ on $\cN(d_{\min})$ and $\cQ_t$ for its
marginal at time~$t$.

We keep the discussion of the
Markov generator at a formal level, since we prefer to give a
constructive definition of the coalescence process. Here we give two
rough alternative descriptions of the dynamics as a random process of
points or as a random process of domains (intervals).

\subsubsection*{Point dynamics} To each point $x\in\xi(0) $ we
associate two exponential random variables $T_{x,\ell}$ and
$T_{x,r}$ of parameter ${\lambda}_\ell(d_x^{ \ell} )$ and $
{\lambda}_r (d_x
^{ r} )$, respectively. We stress that $d_x^{ \ell}$ and $ d_x
^{ r} $ refer to the configuration $\xi(0) $: at time $0$ the
domains on the left and on the right of the point $x $ are,
respectively, $[x-d_x^{ \ell},x]$ and $[x, x+ d_x^{ r}]$. All
random variables must be independent. If $t= T_{x, \ell}\leq
T_{x,r}$ and at time $t-$ the point $x - d^{ \ell}_x$ still
exists, then we set $\xi(t)= \xi(t-) \setminus\{x\} $. Moreover, we
say that the two domains having $x$ as separation point merge or
coalesce at time $t$, and that the domain on the left of $x$
incorporates the domain on the right of $x$ at time $t$. If $t=
T_{x, r}< T_{x,\ell}$,
and at time $t-$ the point $x +
d^{ r}_x$ still exists, then we set $\xi(t)= \xi(t-) \setminus
\{x\} $. Moreover, we say that the two domains having $x$ as
separation point merge or coalesce at time~$t$, and that the domain
on the right of $x$ incorporates the domain on the left of $x$ at
time $t$. Finally, if $t= T_{x, r}\wedge T_{x,\ell}$, but the above
two cases do not take place, then we set $\xi(t)= \xi(t-)$. See
Figure \ref{figblocking} for an illustration.

In order to formalize the above construction, we proceed as
follows. Given \mbox{$t>0$}, we define $ \Upsilon_t$ as the set of points
$x\in\xi(0)$ such that $T_{x,\ell} \wedge T_{x,r}\leq t $. On the
set $ \Upsilon_t$ we define a graph structure putting an edge
between points \mbox{$x,y\in\Upsilon_t$} if and only if $x$ and $y$ are
consecutive points in $\xi(0)$. Since the functions~${\lambda}_\ell,
{\lambda}_r$
are bounded from above, a.s. for any fixed time $t$, the above graph
$\Upsilon_t$ has only connected components (clusters) of finite
cardinality. Then, $ \xi(0)\setminus\Upsilon_t $ is included in
$\xi(s)$ for all $s \in[0,t]$, while the evolution of $
(\xi(s)\dvtx s \in[0,t]) $ restricted to each cluster of
$\Upsilon_t $ follows the rules stated at the beginning, which are
now meaningful a.s. since clusters have finite cardinality a.s.

\subsubsection*{Domain dynamics} We give here only a rough description
of the
dynamics. In Section \ref{secdomaindyn} we will discuss in detail a
basic coupling leading to the definition on the same
probability space of the domain dynamics for all initial
configurations $\xi(0)\in\cN(d_{\min})$.

One assigns to each
domain $\D=[x,x']$ with length $d$ present in $\xi(0)$ an
exponential random variable $T_\D$ of parameter ${\lambda} (d)$ and a coin
$C_\D$ with faces $-1,1$ appearing with probability $ {\lambda}_r
(d)/{\lambda}(d)$ and ${\lambda}_\ell(d)/{\lambda}(d)$,
respectively. All random
variables must be independent. If $t = T_\D$ and if at time $t-$ the
domain $\D$ still exists, then at time $t$ the domain $\D$
incorporates its left domain [i.e., $\xi(t)= \xi(t-) \setminus
\{x\}$] if $C_\D=-1$, while $\D$ incorporates its right domain
[i.e., $\xi(t)= \xi(t-) \setminus\{x'\}$] if $C_\D=1$.

%
%f1 ###
%
\begin{figure}

\includegraphics{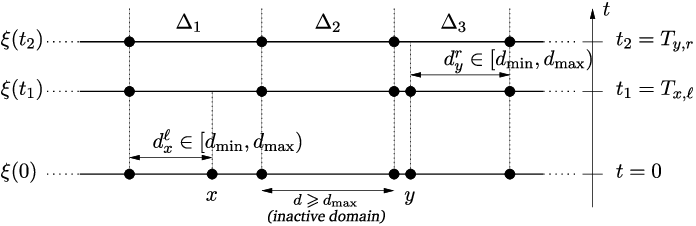}

\caption{An example of the one-epoch coalescence process starting
from $\xi(0)$. At time $t=0$, the domain of length $d$ is inactive
since $d \geq d_{\max}$. At time $t_1$, site $x$ disappears and since
$t_1=T_{x,\ell}$, the domain on the left of $x$ incorporates the
domain on the right of $x$. Analogously at $t_2$ point $y$ disappears
since the domain on the right of $y$ incorporates the domain on the
left of $y$. The domain $\Delta_1$ and $\Delta_3$ are
inactive since they are resulting from a coalescence. The domain
$\Delta_2$ is frozen for $t>t_2$, due to the presence of $\Delta_1$ and
$\Delta_3$. This illustrates the blocking effect.}
\label{figblocking}
\end{figure}

We can now explain the dynamical meaning of assumptions (A1) and~(A2):
\begin{itemize}
\item
(A1) means that a domain is \textit{active}, that is, it can incorporate
another domain, \textit{iff} its length $d$ lies
in $[d_{\min},d_{\max})$.
\item
(A2) means that a domain resulting from a
coalescence is not active.
\end{itemize}
As consequence, the following \textit{blocking effect} appears: given
three nearest-neighbor inactive
domains $\D_1, \D_2, \D_3$, the intermediate domain $\D_2$ is
frozen, in the sense that its extreme points cannot be erased; see Figure
\ref{figblocking}.

% \begin{figure}[ht]
% \psfrag{x0}{}%{$x_0(0)$}
% \psfrag{x1}{$x$}
% \psfrag{x2}{}%{$x_2(0)$}
% \psfrag{x3}{$y$}
% \psfrag{x4}{}%{$x_4(0)$}
% \psfrag{y0}{}%{$x_0(t_1)$}
% \psfrag{y1}{}%{$x_1(t_1)$}
% \psfrag{y2}{}%{$x_2(t_1)$}
% \psfrag{y3}{}%{$x_3(t_1)$}
% \psfrag{z0}{}%{$x_0(t_2)$}
% \psfrag{z1}{}%{$x_0(t_2)$}
% \psfrag{z2}{}%{$x_0(t_2)$}
% \psfrag{dx}{$ d_{x}^\ell$}
% \psfrag{dy}{$ d_{y}^r$}
% \psfrag{t}{$t$}
% \psfrag{t0}{$t=0$}
% \psfrag{t1}{$t_1=T_{x,\ell}$}
% \psfrag{t2}{$t_2=T_{y,r}$}
% \psfrag{xi0}{$\xi(0)$}
% \psfrag{xi1}{$\xi(t_1)$}
% \psfrag{xi2}{$\xi(t_2)$}
% \psfrag{D1}{$ \Delta_1$}
% \psfrag{D2}{$ \Delta_2$}
% \psfrag{D3}{$ \Delta_3$}
% \psfrag{i}{$ \genfrac{}{}{0pt}{}{d \geq d_{\max}}{
% \begin{center}
% \hspace{-1,5cm}\includegraphics[width=.80\columnwidth]{pointdynamics}
% \end{center}
% \caption{An example of the one-epoch coalescence process starting
% from $\xi(0)$. At time $t=0$, the domain of length $d$ is inactive
% since $d \geq d_{\max}$. At time $t_1$, site $x$ disappears and
%since
% $t_1=T_{x,\ell}$, the domain on the left of $x$ incorporates the
% domain on the right of $x$. Similarly at $t_2$ the point $y$
% disappears because the domain to its right incorporstes the domain to
% its left. The domain $\Delta_1$ and $\Delta_3$ are
% inactive since they are resulting from a coalescence. The domain
% $\Delta_2$ becomes frozen after $t_2$, due to the presence of $
% $\Delta_3$. This illustrate the blocking effect.}
% \label{figblocking}
% \end{figure}
By definition of the one-epoch coalescence process, points can only be
removed. Therefore, on any finite interval $I$, $\xi(t) \cap I$
converges as $t\to\infty$, and the following lemma follows at once.
\begin{Lemma}\label{volarelontano}
For any given initial condition $\xi\in\cN(d_{\min})$ the
following hold:
\begin{longlist}
\item $\xi(t)
\subset\xi(s)$ if $s \leq t$;
\item
the configuration $\xi(t) $ is constant on
bounded intervals eventually in~$t$;
\item there exists a unique element $\xi(\infty)$ in $\cN
(d_{\max})$
such that
$\xi(t)\cap I=\xi(\infty)\cap I$ for all large enough $t$ (depending
on $I$) and all bounded intervals~$I$.
\end{longlist}
\end{Lemma}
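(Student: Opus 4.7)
The plan is to handle the three assertions in turn, with the bulk of the work in (iii). Part (i) is immediate from the construction: each jump of $\xi(\cdot)$ removes a single point and no point is ever added, so $s \leq t$ gives $\xi(t) \subset \xi(s)$.

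For (ii), fix a bounded interval $I$. Since $\xi(0) \in \cN$ is locally finite, $|\xi(0) \cap I| < \infty$. By (i), $t \mapsto \xi(t) \cap I$ is a non-increasing map into the (finite) power set of $\xi(0) \cap I$; hence it is a c\`adl\`ag step function with at most $|\xi(0) \cap I|$ jumps, and after some random finite time $t_I$ it becomes constant.

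For (iii), I set $\xi(\infty) := \inter_{t \geq 0} \xi(t)$. By (i)--(ii), this set is locally finite, satisfies $\xi(\infty) \cap I = \xi(t) \cap I$ for every bounded $I$ and all $t$ large (depending on $I$), and is uniquely determined by this property. Moreover $\xi(\infty) \in \cN(d_{\rm min})$: for any $x \in \xi(\infty)$ and any $t \geq 0$, the next point of $\xi(t)$ to the right of $x$ lies at distance $\geq d_{\rm min}$ (since $\xi(t) \in \cN(d_{\rm min})$) and at most the distance to the next point of $\xi(\infty)$ to the right of $x$; an analogous argument applies to the left.

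The main obstacle is then to show $\xi(\infty) \in \cN(d_{\rm max})$ almost surely, \ie to rule out domains of length in $[d_{\rm min}, d_{\rm max})$. Suppose $[x, x']$ were such a domain of $\xi(\infty)$, with length $d \in [d_{\rm min}, d_{\rm max})$. Assumption (A2) forces every merge-created domain to have length $\geq d_{\rm max}$, so $[x, x']$ must already be a domain of $\xi(0)$. Then $\xi(0) \cap (x, x') = \emptyset$, and by (i) also $\xi(t) \cap (x, x') = \emptyset$ for every $t$; combined with $x, x' \in \xi(\infty) \subset \xi(t)$, this shows that $[x, x']$ is a domain of $\xi(t)$ for all $t \geq 0$. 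In the domain-dynamics construction this domain carries an independent exponential clock of rate $\l(d) > 0$; at its (a.s.\ finite) firing time the prescribed merge removes $x$ or $x'$, contradicting their permanent presence in $\xi(t)$. A union bound over the countably many initial active domains then yields a.s.\ $\xi(\infty) \in \cN(d_{\rm max})$.
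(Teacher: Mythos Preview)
Your proof is correct and follows the same approach the paper indicates (monotonicity from ``points can only be removed'', then local finiteness forces eventual constancy on bounded intervals); the paper in fact offers no argument beyond the one-line remark preceding the lemma. You supply the only nontrivial detail the paper leaves implicit, namely the argument that $\xi(\infty)\in\cN(d_{\rm max})$ via (A2) and the a.s.\ finite clock of an active initial domain, and this is handled correctly.
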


Due to the above lemma, $\xi(\infty)$, the SPP representing the
asymptotical state of the coalescence process, is well defined. Our
first main result is given by the following two theorems.
% is for this limiting SPP.
% The blocking effect will allow us to prove the following theorem,
%which is our
%first main result.
It states that,
starting from a left-bounded renewal (resp.,
a~$\mathbb{Z}$-stationary %C added
or stationary renewal) SPP $\xi$,
at a later time~$t$ the coalescence process $\xi(t)$ remains of the
same type. Moreover, there
exists a key identity between the Laplace transform of the interval
law at time $t=0$ and time $t=\infty$. This equation, that we call
\textit{one-epoch recursive equation}, will play a crucial role in a
recursive scheme for the hierarchical coalescence process.

\begin{Theorem}[(Renewal property)]
\label{teo1}
Let $\nu, \mu$ be probability measures on~$\bbR$ and $[d_{\min},
\infty)$, respectively. Then, for all $t\in[0,\infty]$ there exist
probability measures $\nu_t,\mu_t$ on $\bbR$ and $[d_{\min},
\infty)$,
respectively, such that $\nu_0=\nu$, $\mu_0=\mu$ and:
\begin{longlist}
\item if $\cQ=\operatorname{Ren}(\nu,\mu)$, then
$\cQ_t=\operatorname{Ren} (\nu_t, \mu_t)$;
\item if $\cQ= \operatorname{Ren} (\mu)$, then
$\cQ_t= \operatorname{Ren} (\mu_t)$;
\item if $\cQ= \operatorname{Ren}_\bbZ(\mu)$, then
$\cQ_t=\operatorname{Ren}_\bbZ(\mu_t)$;
\item If $\cQ= \operatorname{Ren} ({\delta}_0, \mu)$, then $\cQ_t(\cdot
\tc0\in
\xi)= \operatorname{Ren}({\delta}_0,\mu_t)$;
\item$\lim_{t\to\infty}\nu_t=\nu_\infty$ and $\lim_{t\to
\infty}\mu_t=\mu_\infty$ weakly.
\end{longlist}
\end{Theorem}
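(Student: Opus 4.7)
My plan is to take (iv) as the central case, deduce (i)--(iii) from it by translation/Palm arguments, and obtain (v) as an \emph{a posteriori} consequence of the almost sure monotonicity $t\mapsto \xi(t)$ stated in Lemma~\ref{volarelontano}. The working tool is the coupling from the \emph{domain dynamics} of Section~\ref{sec:one-epoch}: to each initial domain $\D$ I attach independent marks $(T_\D,C_\D)$, namely an exponential clock of rate $\l(|\D|)$ and a $\pm 1$ coin with distribution $\l_r/\l$, $\l_\ell/\l$, and realize $\xi(t)=\Psi_t(\xi(0),\{(T_\D,C_\D)\}_\D)$ measurably; the a.s.\ finiteness of the connected components of the activity graph $\Upsilon_t$ makes $\Psi_t$ computable locally within finite clusters. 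Under $\cQ=\text{Ren}(\d_0,\mu)$, the triples $Y_k=(d_k,T_k,C_k)$ attached to successive domains $\D_k=[x_{k-1},x_k]$ are i.i.d., and this is the structural input that drives everything.

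For (iv), I fix $t>0$ and, on the event $\{0\in\xi(t)\}$, index by $0=N_0<N_1<N_2<\cdots$ the surviving points, so that the time-$t$ gaps are $d^{(t)}_k=\sum_{j\in B_k}d_j$ with $B_k=\{N_{k-1}+1,\dots,N_k\}$. The core step is the following \emph{regenerative identity}: conditionally on $\{x_{N_k}\in\xi(t)\}$ and on $x_{N_k}$, the SPP $\xi(t)\cap[x_{N_k},\infty)$ is independent of $\{Y_j:\,j\le N_k\}$ and is distributed as the time-$t$ OCP evolution of a $\text{Ren}(\d_{x_{N_k}},\mu)$ initial configuration, conditioned on $x_{N_k}\in\xi(t)$. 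To prove it I would first truncate to finitely many initial domains (justified by the finite-cluster property) and write the event $\{N_1=n,\,0\in\xi(t)\}$ as a function of $(Y_1,\dots,Y_n)$ together with a boundary condition on $\D_{n+1}$: the clock/coin on $\D_{n+1}$ must not erase $x_n$ by time $t$. A memoryless re-sampling of $(T_{n+1},C_{n+1})$ conditioned on that boundary event restores an i.i.d.\ initial condition to the right of $x_{N_1}$, and iteration of the decomposition yields that $(d^{(t)}_k)_{k\ge 1}$ are i.i.d.\ with some common law $\mu_t$.

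Statements (i)--(iii) then follow by conditioning and translation. For (i), decompose $\text{Ren}(\nu,\mu)$ over $x_0\sim\nu$ as a mixture of translates $\t_{x_0}\text{Ren}(\d_0,\mu)$ and use that $\Psi_t$ commutes with spatial translation (the rates depend only on interval lengths); (iv) then produces the renewal structure at time $t$, with the law of the leftmost surviving point repackaged into $\nu_t$. For (iii), the conditional law $\cQ(\cdot\mid 0\in\xi)$ under $\cQ=\text{Ren}_\bbZ(\mu)$ splits into two independent one-sided renewal sequences emanating from $0$; applying (iv) to each side (modulo the reflection $\l_\ell\leftrightarrow\l_r$) and invoking shift invariance of the dynamics reconstructs the $\bbZ$-stationary law at time $t$. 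Statement (ii) is the continuous analogue, obtained from the Palm identity~\eqref{scimmia_zimbo} after reducing to the left-bounded renewal picture via the representation of stationary renewal SPPs in terms of their Palm distribution.

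Finally, (v) is a soft consequence of the a.s.\ monotonicity: $\xi(t)$ decreases to a limit $\xi(\infty)\in\cN(d_{\rm max})$, so $\cQ_t\to\cQ_\infty$ weakly on $\cN$, and since each $\cQ_t$ is of renewal type with parameters $(\nu_t,\mu_t)$, the parameters themselves must converge weakly. The main obstacle I expect is the regenerative step used to prove (iv): the event delimiting the block $B_k$ depends jointly on the clocks of $\D_{N_k}$ and $\D_{N_k+1}$ (and, via the finite-cluster argument, possibly on further neighbors), so cleanly decoupling the marks \emph{inside} a block from the dynamics to its right requires a careful memoryless re-sampling of the clock/coin on $\D_{N_k+1}$ conditional on $x_{N_k}\in\xi(t)$. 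This is where all the technicalities concentrate.
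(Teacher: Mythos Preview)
Your global plan---prove (iv) via a regeneration argument on the domain-dynamics coupling, then lift to (i)--(iii) by translation and Palm identities, then get (v) from monotonicity---is close in spirit to the paper's route, and your regenerative identity is the right target. The paper also builds everything on the same coupling, but its organizing lemma is different from the mechanism you propose, and that difference matters.

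The concrete gap is in your proof of the regenerative step. You claim that $\{N_1=n,\,0\in\xi(t)\}$ can be written as a function of $(Y_1,\dots,Y_n)$ together with a ``boundary condition on $\D_{n+1}$'', to be handled by memoryless re-sampling of $(T_{n+1},C_{n+1})$. This is not correct: whether $\D_{n+1}$ is still alive at its ring time $T_{n+1}$ depends on whether $x_{n+1}$ has already been erased, which in turn depends on $(Y_{n+2},Y_{n+3},\dots)$. So the survival of $x_n$ genuinely involves the whole right half-line, not just $\D_{n+1}$; you partly acknowledge this in your last paragraph, but then still propose re-sampling only $(T_{n+1},C_{n+1})$, which cannot absorb that dependence. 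There is a second problem even in the idealized case: conditioning on ``$\D_{n+1}$ does not erase $x_n$ by time $t$'' biases the law of $d_{n+1}$ (the rate $\l(d_{n+1})$ and the coin law $\l_r(d_{n+1})/\l(d_{n+1})$ depend on $d_{n+1}$), so re-sampling the clock/coin alone does not restore the i.i.d.\ $\mu$-law to the right. Memorylessness of the exponential is a red herring here.

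What actually makes the regeneration go through is a \emph{pathwise} separation, not a distributional re-sampling. The paper isolates this as a key lemma (``Separation effect''): for any deterministic $\z$ with $x\in\z$, and any left/right path events $\cA,\cB$,
\[
\bbP_\z\bigl(\cA\cap\cB\cap\{x\in\xi(t)\}\bigr)
=\bbP_{\z\cap(-\infty,x]}\bigl(\cA\cap\{x\in\xi(t)\}\bigr)\,
 \bbP_{\z\cap[x,\infty)}\bigl(\cB\cap\{x\in\xi(t)\}\bigr).
\]
The proof is a deterministic statement about the coupling: if $x$ is never erased up to time $t$, then the trajectory $\xi^\z(s,\o)\cap(-\infty,x]$ is literally the trajectory of the OCP started from $\z\cap(-\infty,x]$ with the same marks on the left domains, and similarly on the right; conversely, gluing a left trajectory with $x$ surviving and a right trajectory with $x$ surviving reproduces a full trajectory with $x$ surviving. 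No re-sampling is needed, because on $\{x\in\xi(t)\}$ the two halves simply do not interact. With this lemma in hand, your regenerative identity follows immediately (the right-hand factor is exactly the law of the OCP from $\text{Ren}(\d_{x_n},\mu)$ conditioned on $x_n\in\xi(t)$), and the rest of your outline for (i)--(iv) is fine. The paper in fact uses this factorization to compute directly the joint characteristic function of $(x_0(t),d_1(t),\dots,d_k(t))$ by summing over which initial points survive; either packaging works once you have the separation lemma.

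One smaller point on (v): ``$\cQ_t\to\cQ_\infty$ weakly hence the parameters converge'' needs an argument (continuity of the inverse of $(\nu,\mu)\mapsto\text{Ren}(\nu,\mu)$). The paper instead extracts explicit series formulas for the characteristic functions of $\nu_t,\mu_t$ from the separation lemma and reads off the $t\to\infty$ limit by monotone convergence; once you have those formulas this is the cheapest route.
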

\begin{Theorem}[(Recursive
identities)]
\label{teo1bis} Let $\nu, \mu$ be probability measures on $\bbR$
and $[d_{\min},
\infty)$, respectively, and let $\nu_t,\mu_t$ be the probability
measures introduced in Theorem \ref{teo1}.
\begin{enumerate}[(ii)]
\item[(i)] Consider the Laplace/characteristic functions
%
%e2.7 ###
%e2.6 ###
%
\begin{eqnarray}
\label{orchestra1}
G_t(s) &=& \int_{[d_{\min}, \infty) } e^{-s x} \mu_t (dx) ,\qquad
s \in\bbR_+\cup i\bbR, \\
\label{orchestra2}
% \mbox{ and }
H_t(s) &=& \int_{ [d_{\min}, d_{\max})} e^{-sx} \mu_t (dx) ,\qquad
s \in
\bbR_+\cup i \bbR.
\end{eqnarray}
Then, for any $s \in\bbR_+\cup i \bbR$, the following \textit
{one-epoch recursive equation} holds:
%
%e2.8 ###
%
\begin{equation}\label{pietra}
1- G_\infty(s)= [ 1-G_0(s)] e^{H_0 (s)}.
\end{equation}
\item[(ii)] Consider the
Laplace/characteristic function
% and suppose that $\int_{\bbR}
%e^{-sx}\nu_0(dx)<\infty$ for $s \in[0,s_0)$. Then for any $t \geq
%0$ and $s \in[0, s_0)$ the Laplace transform
%
\[
L_t(s) = \int e^{-s x} \nu_t(dx),\qquad s\in\bbR_+ \cup\in i
\bbR
.%\bbE_{\operatorname{Ren} (\nu_0, \mu_0)} ( e^{-s x_0(t)} ) .
\]

%is well defined.
%
\begin{enumerate}[(b)]
\item[(a)]
If ${\lambda}_r \equiv0$, then $\nu_t=\nu_0$ for all $t\geq0 $.
Hence $L_t(s)=L_0(s)$ for all $t \geq0$.
\item[(b)] If $ {\lambda}_\ell= \gamma{\lambda}_r$ for some
$\gamma\in
[0,\infty)$, then, for any $s \in\bbR_+ \cup i \bbR$,
%
%e2.9 ###
%
\begin{equation}\label{pietrabis}
L_\infty(s)= L_0 (s) \exp\biggl\{\frac{H_0 (s)-H_0(0)}{1+\gamma}
\biggr\}.
\end{equation}
Moreover, if $\cQ=\operatorname{Ren}(\nu,\mu)$
%
%e2.10 ###
%
\begin{equation}\label{troppacocacola}
\mathbb{P}_ \cQ\bigl( x_0(0) \in\xi(\infty) \bigr) =
e^{-{H_0(0)}/({1+\gamma})} ,
\end{equation}
where $x_0(0)$ denotes the first point of the initial configuration
$\xi(0)$.
\end{enumerate}
\end{enumerate}
\end{Theorem}

The proofs of Theorems \ref{teo1} and
\ref{teo1bis} are given in Sections \ref{fondamenta} and
\ref{brunatecchio}.
\begin{remark}
In (ii) we have analyzed two cases [(a) and (b)] motivated by the
East model and by the Paste-all model. The arguments used in the
proof of point (ii) could, however, be applied to other cases as
well. We stress that the Laplace transform $L_t (s)$, $s \in
\bbR_+$, could diverge since $\nu_t$ has support on $\bbR$.
Therefore, the above identities in point (ii) have to be thought
of as identities in the extended space $[0,\infty]$.
% Always in (ii) we have switched to characteristic functions because
%the
% first point \club$X_0(\xi)$ in $\xi(t)$ could be negative and its
% Laplace transform could diverge.
%Formula \eqref{pietrabis} is valid
%also for $s \geq0$, in the sense that $L_\infty(s)= \infty$ if and
%only if $L_0(s)=\infty$.
\end{remark}

We point out that the one-epoch recursive equation (\ref{pietra})
uniquely determines $\mu_\infty$ when knowing $\mu_0, d_{\min},d_{\max
}$. In
particular, these three elements are the unique traces of the
dynamics that asymptotically survive. In other words, the precise form
of the rates ${\lambda}_\ell$ and ${\lambda}_r$ is irrelevant. In
the case
of a left-bounded renewal point process the limiting first point law
$\nu_\infty$ does not share such a universality, although the trace
of ${\lambda}_\ell$ and ${\lambda}_r$ on $\nu_\infty$ is only partial.
\begin{remark}
\label{meanfield}
Assume for simplicity that $\mu$ is
concentrated on $\bbN_+$, so that the domains have
integer length at any time. After properly constructing the Markov generator
(\ref{pescedaprile}) one could prove that
%
%e2.11 ###
%
\begin{equation}\label{sparaalcuore}\qquad
\partial_t \mu_t(d)=-{\lambda}(d) \mu_t (d)+ \sum_{x=1}
^{d-1}[ {\lambda}_\ell( x)+{\lambda}_r (d-x) ]\mu_t
(x)\mu_t (d-x) .
\end{equation}
Note that if $d$ is active, then only the first addendum in the
right-hand side is present, while if $d$ is inactive this first addendum is
absent. From this observation, one easily obtains that $\partial_t
G_t= (1-G_t)\partial_t H_t$, and therefore
%
%e2.12 ###
%
\begin{equation}\label{tempigenerici}
1-G_t(s)= \bigl( 1-G_0(s)\bigr)\exp\{H_0(s)-H_t(s)\} \qquad
\forall t,s \geq0 .
\end{equation}
Taking the limit $t \to\infty
$ one gets (\ref{pietra}).
This
strategy has been applied in~\cite{SE}, where the treatment is not
rigorous, and will be formalized in~\cite{FMRT2} in order to treat
other coalescence processes as in~\cite{BDG}. It could be applied to
derive (\ref{pietrabis}). While the Smoluchoswski-type equation
(\ref{sparaalcuore}) has a mean-field structure (see, e.g.,
\cite{A}), in proving (\ref{pietra}) and (\ref{pietrabis}), we have
followed here a more constructive strategy, and we have investigated
how a domain of given length can emerge at the end of the epoch or
how a given point can become the first point for the configuration
$\xi(\infty)$ at the end of the epoch.
\end{remark}

%s2.3 ###
\subsection{The hierarchical coalescence process}

% \begin{figure}[ht]
% % \psfrag{d1}{$ \Delta_1
% % \textsl{ of length } 1$}
% % \psfrag{d2}{$ \Delta_2
% % \textsl{ of length } 3$}
% % \psfrag{d3}{$ \Delta_3 \textsl{ of
% % length } 2$} \psfrag{e1}{epoch 1} \psfrag{e2}{epoch 2}
% % \psfrag{xi1}{$\xi^{(1)}(0)$}
% % \psfrag{xi2}{$\xi^{(2)}(0)=\xi^{(1)}(\infty)$}
% % \psfrag{xi3}{$\xi^{(3)}(0)=\xi^{(2)}(\infty)$}
% \begin{center}
% \hspace{-2,5cm}\includegraphics[width=.80\columnwidth]{hcpdynamics}
% \end{center}
% \caption{An example of HCP dynamics, with $d^{(n)}=n$. The
% distances between the points are, from left to right, $1$, $1$
% (corresponding to $\Delta_1$), $2$, $1$, $3$ (corresponding to
% $\Delta_2$), $1$, $1$, $2$ (corresponding to
% $\Delta_3$), $1$... During epoch 1, only the
% domains of length $1$ are active. In particular, $\Delta_1$ is
% active while $\Delta_2$ and $\Delta_3$ are inactive. At the end of
% epoch 1 there are no more domains of length less than
% $2$ (see Lemma \ref{volarelontano}). During epoch 2
% domains of length $2$ are active and at the end there are no
% more domains of length less than $3$, and so on. Note that, thanks to
%the merging with active domains,
% the length of an inactive domain can increase. Here
% this is the case for $\Delta_2$ and $\Delta_3$ during epoch 1.}
% \label{fighcp}
% \end{figure}

We can finally introduce the \textit{hierarchical coalescence process}
(HCP). The dynamics depend on the
following parameters and functions: a strictly increasing sequence of
positive numbers
$\{d^{(n)}\}_{n\ge1}$ and a family of uniformly bounded functions
${\lambda}_\ell^{(n)}, {\lambda}_r^{(n)}\dvtx[d^{(n)},\allowbreak\infty
]\rightarrow[0,A] ,n
\geq1$. Without loss of generality we may assume that $d^{(1)}=1$.
We set as before ${\lambda}^{(n)}:= {\lambda}^{(n)}_\ell+ {\lambda
}^{(n)}_r$, and we
assume:
\begin{longlist}[(A3)]
\item[(A1)] for any $n \in\bbN_+$, ${\lambda}^{(n)} (d) >0$ if and
only if $d \in[d^{(n)}, d^{(n+1)})$;
\item[(A2)] for any $n \in\bbN_+$, if $d,d'\geq d^{(n)}$, then
$d+d'\geq d^{(n+1)}$ (i.e., $2d^{(n)} \geq
d^{(n+1)}$);
\item[(A3)] $\lim_{n\to\infty}d^{(n)}=\infty$.
\end{longlist}
For example, one could take $d^{(n)}=n$ or $d^{(n)} =
a^{n-1}$ with $a \in(1,2]$.

The HCP is then given by a sequence of one-epoch coalescence
processes, suitably linked. More precisely, the stochastic evolution
of the HCP is described by the sequence of paths $\{ \xi^{(n) }
(\cdot)\}_{n\ge1}$ where each $\xi^{(n)}$ is the random path
describing the evolution of the one-epoch coalescence process with
rates~${\lambda}^{(n)}_\ell,{\lambda}^{(n)}_r$, active domain lengths
$d^{(n)}_{\min}=d^{(n)}, d^{(n)}_{\max}=d^{(n+1)}$ and initial
condition $\xi^{(n)}(0)=\xi^{(n-1)}
(\infty)$, \mbox{$n\geq2$}.
Informally we refer to $\xi^{(n)}$ as describing the evolution in the
$n$th epoch. See Figure \ref{fighcp} for a graphical
illustration.\looseness=1

%
%f2 ###
%
\begin{figure}

\includegraphics{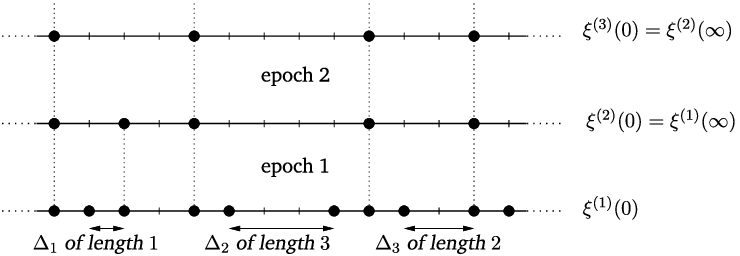}

\caption{An example of HCP dynamics, with $d^{(n)}=n$. The
distances between the points are, from left to right, $1$, $1$
(corresponding to $\Delta_1$), $2$, $1$, $3$ (corresponding to
$\Delta_2$), $1$, $1$, $2$ (corresponding to $\Delta_3$)$\ldots.$ At the
beginning of epoch 1, only the
domains of length in $1$ are active. In particular, $\Delta_1$ is
active while $\Delta_2$ and $\Delta_3$ are inactive. At the end of
epoch~1, there are no more domains of length less than
%C or equal to
$2$ (see Lemma \protect\ref{volarelontano}). At the beginning of
epoch 2,
domains of length $2$ are active and at the end, there are no more
domains of length less than
% or equal to
$3$, and so on. Note
that an inactive domain as $\Delta_2$ can increase its length.}
\label{fighcp}
\end{figure}

Theorem \ref{teo1} gives us information on the evolution and its
asymptotic inside each epoch when the initial condition is a SPP of
the renewal type. If, for example, the initial distribution $\cQ$ for the
first epoch is $\operatorname{Ren}(\nu,\mu)$, we can use Theorem \ref{teo1}
together with the link $\xi^{(n+1)}(0)=\xi^{(n)}
(\infty)$ between two consecutive epochs to recursively define the measures
$\mu^{(n)},\nu^{(n)}$ by
%
%e2.13 ###
%
\begin{eqnarray}
\label{eq1}
\mu^{(n+1)}&=&\mu_\infty^{(n)},\qquad
\mu^{(1)}=\mu,\nonumber\\[-8pt]\\[-8pt]
\nu^{(n+1)}&=&\nu_\infty^{(n)},\qquad \nu^{(1)}=\nu.\nonumber
\end{eqnarray}
With this position it is then natural to ask if, in some suitable
sense, the measures
$\mu^{(n)},\nu^{(n)}$ have a well-defined limiting behavior as $n\to
\infty$. The affirmative
answer is contained in the following theorem, which is the core of
the paper.
Before stating it we need a result on the Laplace transform of
probability measures on $[1,\infty]$.
\begin{Lemma}
\label{prelimmainth}
Let $\mu$ be a probability measure on $[1, \infty)$, and let $g(s)$ be
its Laplace transform, that is, $g(s)= \int e^{-s x}
\mu(dx) , s >0$.
% \begin{equation}\label{laplacino}
% g(s)= \int_{[d^{(1)}, \infty) } e^{-s x}
% \mu(dx) , s >0
% . \end{equation}
%
\begin{enumerate}[(ii)]
\item[(i)] If
%
%e2.14 ###
%
\begin{equation}
\label{limarrosto}
\lim_{s \downarrow0} -\frac{s
g'(s)}{1-g(s)}=c_0,
\end{equation}
then necessarily $0\le c_0\le1$.
\item[(ii)] The existence of limit (\ref{limarrosto}) holds if:

\begin{enumerate}[(a)]
\item[(a)]
$\mu
$ has finite mean and then $c_0=1$, or

\item[(b)] for some ${\alpha}\in(0,1)$ $\mu$ belongs to the domain of
attraction of an
${\alpha}$-stable law or, more generally, $\mu((x,\infty)
)=x^{-{\alpha}} L(x)$
% \begin{equation}\label{pizzalfa}
% \mu((x,\infty))=x^{-{\alpha}} L(x) , x \geq0
% \end{equation}
where $L(x)$ is a~slowly varying\setcounter{footnote}{2}\footnote{A function $L$ is said to
be slowly varying at infinity, if for all $c>0$, $\lim_{x \to
\infty} L(cx)/L(x)=1$.} function at $+\infty$, ${\alpha}\in[0,1]$,
and in
this case $c_0=\alpha$.
\end{enumerate}
\end{enumerate}
\end{Lemma}
\begin{remark}
One could wonder if limit
(\ref{limarrosto}) always exists. The answer is negative and an
example is given in Appendix \ref{controcampo}.
\end{remark}

The proof of Lemma \ref{prelimmainth} is discussed in Appendix \ref{appA}.
\begin{Theorem}\label{teo2}
Let $\nu, \mu$ be probability measures on
$\bbR$ and $[1, \infty)$, respectively, and let $g(s)$ be the Laplace
transform of $\mu$. Let $\cQ$ be the initial law of $\xi^{(1)}$, and
suppose that $\cQ$ is either $\cQ=\operatorname{Ren} (\nu, \mu)$ or $\cQ=
\operatorname{Ren} (\mu)$ or $\cQ=\operatorname{Ren}_\bbZ(\mu)$. For
any $n\in
\bbN_+$ let $X^{(n)}$ be a random variable with law $\mu^{(n) }$
%C
defined in (\ref{eq1})
so that $g(s):=\bbE[ e^{-s X^{(1)}}]$.

If (\ref{limarrosto}) holds for $g$,
then the rescaled variable $Z^{(n)}:=X^{(n)}/d^{(n)}$ weakly
converges to the random variable $Z^{(\infty)}\equiv Z^{(\infty)}_{c_0}$
whose Laplace transform is given by
%
%e2.15 ###
%
\begin{equation}\label{macedonia}
g^{(\infty)}_{c_0}(s)= 1- \exp\biggl\{ - c_0 \int_1^\infty
\frac{e^{-sx}}{x} \,dx \biggr\}.
\end{equation}
The corresponding probability density is of the
form $z_{c_0} (x)\mathbh{1}_{x \geq1}$, where $z_{c_0}$ is
the continuous function on $[1, \infty)$ given by
%
%e2.16 ###
%
\begin{equation}\label{vonnegut}
z_{c_0}(x)= \sum_{k=1}^\infty\frac{(-1)^{k+1}c_0^k}{k!} \rho_k(x)
\mathbh{1}_{x\geq k } ,
\end{equation}
where $ \rho_1(x)= 1/x$ and
\[
\rho_{k+1} (x)= \int_1^\infty d x_1
\cdots\int_1 ^\infty dx _k \frac{1}{ x-\sum_{i=1}^{k} x_i } \prod
_{j=1}^{k} \frac{1}{x_j}
\mathbh{1}_{\sum_{i=1}^k x_i\leq x-1}
,\qquad k\geq1 .
\]
\end{Theorem}

The proof
of Theorem \ref{teo2} is discussed in Section \ref{secproofteo2}.
\begin{remark}
The remarkable fact of the above result is that the only reminiscence
of the initial distribution in the limiting law is through the
constant $c_0$ which, as
proved in Lemma \ref{limarrosto}, is ``universal'' for a large class
of initial laws~$\mu$. Hence the term \textit{universality} in the title.
%C spostato qui
We also stress that, starting with a stationary or $\bbZ$-stationary
renewal SPP, the weak limit of~$Z^{(n)}$ always exists
and it is universal ($c_0=1$), not depending on the rates~${\lambda}^{(n)}_\ell$, ${\lambda}^{(n)}_r$.
\end{remark}
\begin{remark}
We point out that the asymptotic Laplace distribution~$g^{(\infty)}_{c_0}$ can be written also as
\[
g^{(\infty)}_{c_0}(s)= 1-
\exp\biggl\{ - c_0 \int_s^\infty\frac{e^{-x}}{x} \,dx \biggr\} = 1-
\exp\{ - c_0 \operatorname{Ei}(s) \} ,
\]
where $\operatorname{Ei}(\cdot)$ denotes the exponential integral function. This
is indeed the form appearing in~\cite{DGY2} and~\cite{SE} with $c_0
=1$ (see previous remark).
\end{remark}

If the law $\mu$ has finite mean then by the above Theorem combined
with~(ii) of Lemma \ref{prelimmainth} we know that
$Z^{(n)}$ weakly converges to the random variable~$Z_{1}^{(\infty)}$.
Actually we can
improve our result to higher moments.
\begin{Proposition} \label{propkmoment}
In the same setting of Theorem \ref{teo2} assume that
$d^{(n)}=a^{n-1}$ for some $a \in(1,2]$, and that $\mu$ has finite
$k$th moment, $k\in\bbN_+$. Then, for any function $f\dvtx[0,\infty)
\to\bbR$ such that $|f(x)| \leq C + C x^k$ for some constant $C$,
it holds
%
%e2.17 ###
%
\begin{equation}\label{unmomento}
\lim_{n \to\infty} \bbE\bigl[ f\bigl(Z^{(n)}\bigr)
\bigr] = \bbE\bigl[ f\bigl(Z_{1}^{(\infty)}\bigr)\bigr].
\end{equation}
\end{Proposition}
\begin{remark}
The choice $d^{(n)}=a^{n-1}$ in Proposition \ref{propkmoment} is
technical and could be relaxed, but at the price of extra hypotheses
(that would not include the case $d^{(n)}=n$, e.g.). In order
to keep the computations as simple as possible we decided to focus
on this particular example which is of interest for applications to
the East model.
\end{remark}

The proof of
Proposition \ref{propkmoment} can be found in Section
\ref{seckmoment}.
Next we concentrate on the asymptotic behavior of the first point law
when starting with a left-bounded renewal SPP.
\begin{Theorem}\label{teo3}
Let $\nu, \mu$ be probability measures on
$\bbR$ and $[1, \infty)$, respectively, and consider the hierarchical
coalescence
process such that the
initial law $\cQ$ of $\xi^{(1)}$ is $\operatorname{Ren} (\nu, \mu)$. Assume
%
%e2.18 ###
%
\begin{equation}\label{turisti}
{\lambda}^{(n)}_\ell= \gamma{\lambda}^{(n)}_r\qquad \forall n
\geq
1 ,
\end{equation}
for some $\gamma\in[0,\infty)$, and let, for any
$n\in\bbN_+$, $X_0^{(n)}$ be
the position of the first
point of the HCP at the beginning of the $n$th epoch.\vadjust{\goodbreak}
%In
%addition, assume that $\int_{\bbR} e^{-sx } \nu(dx) < \infty$ for
%all $s \in[0, s_0)$

If limit (\ref{limarrosto}) exists for the Laplace transform
$g$ of $\mu$ then, as $n\to\infty$, the rescaled random variable
$Y^{(n)}:=X_0^{(n)}
/ d^{(n)}$ weakly converges to the positive random variable
$Y^{(\infty)}_{c_0}$ with Laplace transform given by
%
%e2.19 ###
%
\begin{equation}\label{thenero}
\bbE\bigl( e^{-s Y^{(\infty)}_{c_0}} \bigr)=\exp\biggl\{-
\frac{c_0}{1+\gamma} \int_{(0,1)} \frac{1-e^{-sy} }{y} \,dy
%[1- e^{-s} + \int_{(0,1)} (
\biggr\},\qquad
s\in\bbR_+.
\end{equation}
\end{Theorem}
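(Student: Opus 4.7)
The proof starts by iterating the one-epoch recursive identity \eqref{pietrabis} of Theorem \ref{teo1bis}(ii)(b). Since $\lambda^{(n)}_\ell = \gamma \lambda^{(n)}_r$ for every epoch, iteration yields
\begin{equation*}
L^{(n)}(s) = L^{(1)}(s) \, \exp\!\left\{\frac{1}{1+\gamma} \sum_{k=1}^{n-1}\bigl(H^{(k)}(s) - H^{(k)}(0)\bigr)\right\},
\end{equation*}
where $L^{(k)}$ and $H^{(k)}$ are the Laplace/characteristic functions of $\nu^{(k)}$ and of the restriction of $\mu^{(k)}$ to $[d^{(k)}, d^{(k+1)})$, as in Theorem \ref{teo1bis}. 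Since $\bbE[e^{-s Y^{(n)}}] = L^{(n)}(s/d^{(n)})$ and $L^{(1)}(s/d^{(n)}) \to 1$ by continuity at $0$, it suffices to prove that
\begin{equation*}
S_n(s) := \sum_{k=1}^{n-1}\bigl(H^{(k)}(s/d^{(n)}) - H^{(k)}(0)\bigr) \;\longrightarrow\; -c_0 \int_0^1 \frac{1-e^{-sy}}{y}\,dy .
\end{equation*}

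The rescaling $x = d^{(k)} u$ turns the $k$-th summand into $\int_1^{d^{(k+1)}/d^{(k)}} (e^{-s u d^{(k)}/d^{(n)}}-1)\, d\tilde\mu^{(k)}(u)$, where $\tilde\mu^{(k)}$ denotes the law of $Z^{(k)} = X^{(k)}/d^{(k)}$. By Theorem \ref{teo2}, $\tilde\mu^{(k)}$ converges weakly to the probability measure with density $z_{c_0}$. The decisive observation is that, by \eqref{vonnegut}, only the $k=1$ term of that series contributes on $[1,2)$, so $z_{c_0}(u) = c_0/u$ \emph{exactly} there; and assumption (A2) ensures that the entire integration range $[1, d^{(k+1)}/d^{(k)})$ is contained in $[1, 2)$. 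Substituting $(c_0/u)\,du$ for $d\tilde\mu^{(k)}(u)$ and changing variables via $v = u d^{(k)}/d^{(n)}$ converts the $k$-th term into $\int_{d^{(k)}/d^{(n)}}^{d^{(k+1)}/d^{(n)}} (e^{-sv} - 1)\, c_0/v\,dv$. Since $\{[d^{(k)}/d^{(n)}, d^{(k+1)}/d^{(n)})\}_{k=1}^{n-1}$ partitions $[1/d^{(n)}, 1)$, summing produces $\int_{1/d^{(n)}}^1 (e^{-sv}-1)c_0/v\,dv$, which tends to $-c_0 \int_0^1 (1-e^{-sy})/y\,dy$ as $n \to \infty$.

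The main obstacle is to make this replacement rigorous, since $\tilde\mu^{(k)}$ converges only weakly while the number of summands grows with $n$. I would handle it by splitting $S_n(s)$ at cutoffs $1 \leq K_1 \ll K_2 \leq n-1$. The block $k \leq K_1$ (fixed, finitely many terms) contributes $O\bigl(\sum_{k \leq K_1} d^{(k)}/d^{(n)}\bigr) \to 0$, using the pointwise bound $|e^{-x}-1|\leq x$ and the uniform boundedness of $u$ on $[1,2)$. The intermediate block $K_1 < k \leq K_2$ consists of finitely many terms each of which, by weak convergence and continuity of the (uniformly bounded) integrand, converges to its integral against $(c_0/u)\,du$. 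The tail $k > K_2$ is compared directly to the corresponding portion of the limit integral over the same partition, and the discrepancy is controlled uniformly in $n$ by the uniformity in Theorem \ref{teo2} and tends to $0$ as $K_2 \to \infty$. Sending first $n \to \infty$ and then $K_2 \to \infty$ produces the desired limit for $S_n(s)$, and Lévy's continuity theorem applied to the characteristic functions concludes the weak convergence of $Y^{(n)}$ to the positive random variable with Laplace transform \eqref{thenero}.
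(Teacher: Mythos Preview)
Your strategy---iterate \eqref{pietrabis}, rescale, and reduce to proving
\[
S_n(s)\;\longrightarrow\; -c_0\int_0^1\frac{1-e^{-sy}}{y}\,dy
\]
---is exactly the paper's. The gap is in the justification of this limit. You replace $d\tilde\mu^{(k)}(u)$ on $[1,a_k)$ by $(c_0/u)\,du$ using only the weak convergence of Theorem~\ref{teo2}, and then appeal to an unspecified ``uniformity in Theorem~\ref{teo2}'' for the tail block. But Theorem~\ref{teo2} gives no rate, and the tail $k>K_2$ contains $n-1-K_2$ terms: even if the $k$-th error is at most $\varepsilon_k\cdot\min(1,2s\,d^{(k)}/d^{(n)})$ with $\varepsilon_k\to 0$, the sum $\sum_{k>K_2}d^{(k)}/d^{(n)}$ is in general unbounded in $n$ (it is $\sim n/2$ when $d^{(n)}=n$), so the tail does not become small uniformly in $n$ as $K_2\to\infty$. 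Your description of the middle block is also off: for fixed $K_1<k\le K_2$ and $n\to\infty$ the $k$-th term of $S_n$ tends to $0$, not to the integral against $c_0/u$. A secondary point: for general $\nu$ on $\bbR$ the Laplace transform $L^{(1)}(s)$ may be infinite for $s>0$, so $L^{(1)}(s/d^{(n)})\to 1$ requires either a reduction to $\nu=\d_0$ or a switch to characteristic functions throughout.

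The paper closes the gap by using not the weak limit but the \emph{exact} form of $\tilde\mu^{(k)}$ on $[1,a_k)$: Lemma~\ref{limonata} together with \eqref{brumsporco} gives the identity \eqref{nuvoletta},
\[
h^{(k)}\!\Bigl(\tfrac{s\,d^{(k)}}{d^{(n)}}\Bigr)=\int_{[d^{(k)}-1,\,d^{(k+1)}-1)}\frac{e^{-s(1+x)/d^{(n)}}}{1+x}\,t^{(1)}(dx),
\]
so that $S_n(s)$ telescopes \emph{exactly} into a single integral over $[0,d^{(n)}-1)$ against the fixed measure $t^{(1)}$. One then integrates by parts, substitutes $y=(1+x)/d^{(n)}$, and applies dominated convergence using the Tauberian consequence $U^{(1)}(y)/y\to c_0$ of hypothesis~\eqref{lim_arrosto} (cf.\ \eqref{marina}). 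In other words, the missing ingredient is precisely the linearizing transformation $g^{(k)}\mapsto t^{(k)}$ of Section~\ref{gattomagico}; once you feed it in, your outline becomes the paper's proof. The paper also first treats $\nu=\d_0$ and reduces the general case by a one-line translation argument.
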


We point out that if ${\lambda}_r ^{(n)} \equiv0$ for all $n \geq1$,
the first point does not
move.\footnote{This is the case for the HCP associated to the West
version of the East model, that is, to the kinetically constrained
model with Glauber dynamics for which the occupation variable at $x$
can be updated iff $x-1$ is empty.} In particular, its asymptotic is
trivial. Theorem \ref{teo3}
is proven in Section~\ref{secteo3}.

Finally, we evaluate the surviving probability of a given point:
\begin{Theorem}\label{teo4}
Let $\nu, \mu$ be probability measures on
$\bbR$ and $[1, \infty)$, respectively, and consider the hierarchical
coalescence
process with initial law~$\cQ$. Assume
%
%e2.20 ###
%
\begin{equation}%\label{turisti}
{\lambda}^{(n)}_\ell= \gamma{\lambda}^{(n)}_r\qquad \forall n
\geq
1 ,
\end{equation}
for some $\gamma\in[0,\infty)$, and let, for any
$n\in\bbN_+$, $X_0^{(n)}$ be
the position of the first
point of the HCP at the beginning of the $n$th epoch.

If the limit (\ref{limarrosto}) exists for the Laplace transform
$g$ of $\mu$ then, as $n\to\infty$:
\begin{longlist}
\item if $\cQ= \operatorname{Ren} (\nu, \mu)$, then:
\[
\mathbb{P}_{\cQ} \bigl( X_0^{(n)} = X_0^{(1)}\bigr) =
\bigl(1/d^{(n)} \bigr)^{({c_0}/({1+\gamma}))(1+o(1))};
\]
\item if $\cQ=\operatorname{Ren}(\mu\tc0)$, then
\[
\mathbb{P}_{\cQ} \bigl(0 \in
\xi^{(n)}(0) \bigr) = \bigl(1/ d^{(n)} \bigr)
^{c_0(1+o(1))}.
\]
\end{longlist}
\end{Theorem}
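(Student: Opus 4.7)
\textbf{Plan for the proof of Theorem \ref{teo4}.}

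The plan is to split the argument in two. First I would derive a \emph{product-over-epochs} formula reducing both probabilities in (i) and (ii) to the sum $\Sigma_n := \sum_{k=1}^{n-1} H^{(k)}(0)$, where $H^{(k)}(0) := \mu^{(k)}\bigl([d^{(k)}, d^{(k+1)})\bigr)$ is the mass of active intervals at the start of epoch $k$. Second, I would evaluate $\Sigma_n$ asymptotically using Theorem \ref{teo2}.

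\textbf{Step 1 (product formula).} For case (i) with $\cQ = \text{Ren}(\nu, \mu)$ I would apply the one-epoch identity \eqref{troppa_cocacola} recursively. By translation invariance of the dynamics together with Theorem \ref{teo1}(iv) (applied after shifting the origin to the current first point), the configuration at the start of epoch $k+1$, conditioned on the first point having survived all previous epochs, is still a left-bounded renewal $\text{Ren}(\delta_{X_0^{(1)}}, \mu^{(k+1)})$. The per-epoch conditional survival probability equals $\exp(-H^{(k)}(0)/(1+\gamma))$ and is deterministic, so iteration gives
\[
\bbP_{\cQ}\bigl(X_0^{(n)} = X_0^{(1)}\bigr) = \exp\Bigl(-\frac{\Sigma_n}{1+\gamma}\Bigr).
\]
For case (ii), with $\cQ = \text{Ren}(\mu \mid 0)$ the process splits, given $0 \in \xi$, into two independent one-sided renewals: a right-going one starting at $0$ (which contributes the same factor as above) and a left-going one ending at $0$. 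For the latter I would use a mirror symmetry $x \mapsto -x$ that swaps $\lambda_\ell^{(k)}$ and $\lambda_r^{(k)}$, so that the ratio hypothesis of Theorem \ref{teo1bis}(ii)(b) still holds but with $\gamma$ replaced by $1/\gamma$, yielding a contribution $\exp(-H^{(k)}(0)\gamma/(1+\gamma))$. Multiplying, the per-epoch factor becomes $e^{-H^{(k)}(0)}$ and hence
\[
\bbP_{\cQ}\bigl(0 \in \xi^{(n)}(0)\bigr) = \exp(-\Sigma_n).
\]

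\textbf{Step 2 (asymptotics of $\Sigma_n$).} Setting $a_k := d^{(k+1)}/d^{(k)} \in (1, 2]$ (by (A2)) and $Z^{(k)} := X^{(k)}/d^{(k)}$, one has $H^{(k)}(0) = \bbP(Z^{(k)} \in [1, a_k))$. By Theorem \ref{teo2}, $Z^{(k)}$ converges weakly to $Z^{(\infty)}_{c_0}$ whose density is given by \eqref{vonnegut}; on $[1, 2)$ only the $k=1$ term contributes and the density equals $c_0/x$. Since $\sum_{k=1}^{n-1}\log a_k = \log d^{(n)}$, the goal reduces to the identity
\[
\Sigma_n = c_0 \log d^{(n)}\,(1+o(1)) \qquad (n \to \infty),
\]
which, plugged into the two displays of Step 1, yields the stated exponents.

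\textbf{The main obstacle} is that bare weak convergence of $Z^{(k)}$ only gives the \emph{additive} estimate $|H^{(k)}(0) - c_0 \log a_k| \to 0$, which is insufficient when $d^{(n)}$ grows slowly (e.g.\ $d^{(n)} = n$, for which $\log a_k \sim 1/k$). The required multiplicative estimate $H^{(k)}(0) = c_0 \log a_k\,(1+\epsilon_k)$ with $\epsilon_k \to 0$ would follow from uniform convergence of the density of $Z^{(k)}$ on $[1,2]$ to $c_0/x$. I would obtain it from the L\'evy-type representation \eqref{tamponegola2}: inverting the Laplace transform $g^{(n)}=1-\exp(-\phi^{(n)})$ term by term shows that, on $[1,2)$, the density of $Z^{(n)}$ equals $t_n(x)/x$, where $t_n$ denotes the density of the measure $t^{(n)}$. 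The explicit solution of the recursion for $t^{(n)}$ produced in the proof of Theorem \ref{teo2} delivers the uniform convergence $t_n \to c_0$ on $[1,2]$; integrating gives the desired multiplicative bound, and a weighted Ces\`aro argument on $\sum_k \log a_k = \log d^{(n)} \to \infty$ (by (A3)) then yields $\Sigma_n/\log d^{(n)} \to c_0$, from which the theorem (including the trivial case $c_0 = 0$) follows.
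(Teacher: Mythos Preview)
Your Step~1 (the product-over-epochs formula via \eqref{troppa_cocacola} and the left/right split for part~(ii)) is essentially the paper's argument and is correct, modulo the special case $\gamma=0$ in~(ii), which you should treat separately since the mirror ratio $1/\gamma$ is undefined there.

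The real problem is Step~2. You correctly identify that an additive estimate on $H^{(k)}(0)$ is not enough when $a_k\to 1$, but your proposed repair does not work. First, $t^{(n)}$ is a Radon measure and need not have a density, so ``$t_n\to c_0$ uniformly'' is not even well posed. Second, even if one reinterprets your claim as the (true) uniform convergence $U^{(n)}(x)\to c_0 x$ on compacts obtained from \eqref{pescefritto} and \eqref{marina}, this is still only an \emph{additive} bound: from \eqref{pescefritto} one gets $U^{(k)}(a_k-1)=c_0(a_k-1)+O(\varepsilon(d^{(k+1)}))+O(\varepsilon(d^{(k)}))$ with $\varepsilon(y):=U^{(1)}(y)/y-c_0\to 0$, and dividing by $a_k-1$ produces an error of order $\varepsilon(d^{(k)})/(a_k-1)$, which in the case $d^{(k)}=k$ equals $k\,\varepsilon(k)$ and need not vanish. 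The Tauberian input $U^{(1)}(y)/y\to c_0$ simply does not control increments of $U^{(1)}$ over intervals of relative length $a_k-1\to 0$, so the multiplicative estimate $H^{(k)}(0)=c_0\log a_k\,(1+o(1))$ may fail, and with it your Ces\`aro step.

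The paper avoids this term-by-term analysis entirely. Using the explicit form \eqref{nuvoletta} one has
\[
h^{(j)}(0)=\int_{[d^{(j)}-1,\,d^{(j+1)}-1)}\frac{t^{(1)}(dx)}{1+x},
\]
and since these intervals tile $[0,d^{(n)}-1)$ the sum telescopes to the single integral
\[
\Sigma_n=\int_{[0,\,d^{(n)}-1)}\frac{t^{(1)}(dx)}{1+x}.
\]
Integration by parts then expresses $\Sigma_n$ in terms of $U^{(1)}$ alone, and the cumulative asymptotic $U^{(1)}(y)/y\to c_0$ yields $\Sigma_n=c_0\log d^{(n)}(1+o(1))$ directly, with no need to control individual epochs. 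This telescoping, not any per-epoch density control, is the missing idea.
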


Note that (ii) does not depend on the value of
$\gamma$. Theorem \ref{teo4} is proven in Section \ref{secteo4}.

Extension of
the above results to one-epoch coalescence process or hierarchical
coalescence process with initial law $\cQ$ describing an exchangeable
SPP will be discussed in Appendix \ref{puntiniscambio}.

%%%%%%%%%%%%%%%%%%%%%%%%%%%%%%%%%%%%%%%%%%%%%%%%%%%%%%%%%%%%%%%%%%%%%%%%%%%%%%%%%%%%%%%%%%%%%%%%%%%%%%%%%%
%%%%%%%%%%%%%%%%%%%%%%%%%%%%%%%%%%%%%%%%%%%%%%%%%%%%%%%%%%%%%%%%%%%%%%%%%%%%%%%%%%%%%%%%%%%%%%%%%%%%%%%%%%

%s3 ###
\section{\texorpdfstring{Renewal property in the OCP: Proof of Theorem \protect\ref{teo1}}{Renewal property in the OCP: Proof of Theorem 2.13}}\label{fondamenta}
In this section and in the next one we will prove our
results concerning the one-epoch
coalescence process (Theorems \ref{teo1} and \ref{teo1bis}) in
a more general setting, namely when the interval $[d_{\min},d_{\max})$
is replaced by a more general set
$\cA\subset(0,\infty)$.
More precisely, let ${\lambda}_\ell, {\lambda}_r$ be
bounded nonnegative functions on $(0,\infty]$, and set ${\lambda
}={\lambda}_\ell+{\lambda}_r$. We assume that: %Writing $\inf\cA$
%for
%the infimum of the set $\cA$ we assume that the initial distribution
%has support in $(\inf\cA, \infty)$ and moreover we require
%
\begin{longlist}[(A2$'$)]
\item[(A1$'$)] ${\lambda}(d)>0$ if and only if $d \in\cA$;

\item[(A2$'$)] if $d,d' \geq\inf(\cA)$, then $d+d'\notin\cA$.
\end{longlist}
Above, $d_{\min}:=\inf(\cA)$ denotes the infimum of the set $\cA
$. When $\cA=
[d_{\min},\allowbreak d_{\max})$, (A1$'$) and (A2$'$) coincide with assumptions
(A1) and
(A2), respectively. A~domain is called active if its length
belongs to $\cA$. The initial distribution~$\cQ$ of the one-epoch
coalescence must be
supported in $[\inf(\cA), \infty)$. In~(\ref{orchestra1})
and~(\ref{orchestra2}) the integration domains become
$[\inf(\cA),\infty)$ and $\cA$, respectively.

The proof of Theorem \ref{teo1} requires the definition of a
universal coupling, that is, the construction on the same
probability space of all one-epoch coalescence processes obtained
by varying the initial configuration. This coupling will be relevant
also in the proof of Theorem \ref{teo4}(ii).

%s3.1 ###
\subsection{Universal coupling for the domain dynamics} \label{secdomaindyn}
In Section \ref{annarella} we have introduced some enumerations of
the points in $ \xi\in\cN$, depending on the property of $\xi$ to
be unbounded both from the left and from the right, or only from the
left. It is convenient here to have a universal enumeration. To
this aim, given $ \xi\in\cN$, we enumerate its points in increasing
order, with the rule that the smallest positive one (if it exists)
gets the label $1$, while the largest nonpositive one (if it exists)
gets the label $0$. We write $N(x, \xi)$ for
the integer number labeling the point $x \in\xi$. This allows to
enumerate the domains of $\xi$ as follows: a domain $[x,x']$ is said
to be the $k$th domain if (i)~$x$ is finite and $N(x, \xi)=k$,
or (ii) $x=-\infty$ and $N(x', \xi)=k+1$. Recall that if
$x=-\infty$, then $\xi$ is unbounded from the left and $x'$ is the
smallest number in~$\xi$.\looseness=-1

We set $\|{\lambda}\|_\infty= \sup_{d \in\mathcal{A}} {\lambda}
(d)$, and we
define ${\lambda}^*_\ell= {\lambda}_r, {\lambda}^*_r ={\lambda
}_\ell$. Obviously ${\lambda} =
{\lambda}_\ell+ {\lambda}_r={\lambda}_\ell^*+{\lambda}_r^*$. This
change of notation should
help the reader. Indeed, in the point dynamics a point $x$ is
erased by the action of its left (right) domain of length $d$ with
rate ${\lambda}_\ell(d)$ (${\lambda}_r(d)$). %C
On the other hand, as explained
again below, if we formulate the model in terms of a domain dynamics
then a domain of length $d$
disappears because of the annihilation of its left (right) extreme
with probability rate ${\lambda}_\ell^* (d)$ ($ {\lambda}_r^*(d) $).

We consider now a probability space $({\Omega}, \cF,P)$ on
which the following random objects are defined and are all
independent: the Poisson processes $ \cT^{(k)} = \{ T_m
^{(k)} \dvtx m\in\bbN\}$ and $ \bar\cT^{(k)} = \{\bar T_m
^{(k)} \dvtx m\in\bbN\}$ of parameter $\| {\lambda}\|_\infty$,
indexed by
$k \in\bbZ$, and the random variables $U^{(k)}_m$, $\bar
U^{(k)}_m$, uniformly distributed in $[0,1]$, indexed by $k \in
\bbZ$ and $m \in\bbN$.

Next, given $\z\in\cN(d_{\min})$ and ${\omega}\in{\Omega}$, to
each domain $\Delta$ that belongs to $\z$ we associate the Poisson
process $\cT^{(k)}$
if $\Delta$ is the $k$th domain in $\z$. In this case, we write $\cT
^{(\D)}$
instead of $\cT^{(k)}$. Similarly we define $\bar\cT^{(\D)} $,
$U^{(\D)}_m$, $\bar
U^{(\D)}_m$.
%$k=N(x,\z)$ if $x$ and $x'$ are both finite numbers, otherwise we
%associate the degenerate Poisson process of parameter $0$. In this
%case we write $\cT^{(x)}$, for the Poisson process $\cT^{(k)}$, and
%similarly we define $T^{(x)}_m$, $U^{(x)}_m$ and $V_m^{(x)}$.
We define $\cW_t[{\omega},\z]$ as
the set of domains $\D$ in $\z$ such that
\begin{eqnarray*}
&&\bigl\{ s\in[0,t] \dvtx
s\in\cT^{(\D)}\cup\bar\cT^{(\D)}, \mbox{ or } s \in
\cT^{(\D')}\cup\bar\cT^{(\D')} \\
&&\hspace*{61.7pt}\mbox{for some domain } \D'
\mbox{ neighboring } \D\bigr\}\not=\varnothing.
\end{eqnarray*}
On $\cW_t[{\omega},\z]$
we define a graph structure putting an edge between domains~$\D$ and
$\D'$ if and only if they are neighboring in $\z$. Since the
function ${\lambda}$ is bounded from above, we deduce that the set
\[
\cB(\z)\!:=\! \{ {\omega} \dvtx\cW_t[{\omega},\z] \mbox{ has
all connected
components of finite cardinality } \forall t \!\geq\!0 \}
\]
has
$P$-probability equal to $1$. Note that the event $ \cB(\z)$
depends on $\z$ only through the infimum and the supremum of the set
$ \{ N(x,\z)\in\bbZ\dvtx x \in\z\}$. By a~simple argument based
on countability, we conclude that
$ P(\cB)=1$, where~$\cB$ is defined as the family of elements
${\omega} \in{\Omega}$ belonging to
$
\bigcap_{\z\in\cN(d_{\min})} \cB(\z)$ and such that all the sets
$\cT^{(k)}[{\omega}]$ and $\bar\cT^{(k)}[{\omega}]$, $k \in\bbZ
$, are
disjoint.

In order to define the path $\{\xi(s)\}_{s\ge0}\equiv\{\xi^\z
(s,{\omega})\}_{s\ge0}$
we first fix a time $t>0$ and define
the path up to time $t$. If ${\omega} \notin\cB$, then we set
\[
\xi(s)=\z\qquad \forall s \in[0,t] .
\]
If ${\omega} \in\cB$, recall
the definition of the graph $\cW_t[{\omega},\z] $. Given a set of domains
$ V$ we write $\bar V$ for the set of the associated extremes, that is,
$x \in\bar V$ if and only if there exists a domain in $V$ having
$x$ as left or right extreme. Moreover, we write $\cV_t[{\omega},\z]
$ for
the set of all domains in $\z$ that do not belong to
$\cW_t[{\omega},\z]$. We define
%
%e3.1 ###
%
\begin{equation}\label{heidi}
\xi(s)\cap\overline{ \cV_t[{\omega},\z]}:= \overline{
\cV_t[{\omega},\z]} \qquad \forall s \in[0,t] ,
\end{equation}
that is, up to time $t$ all points in $\overline{ \cV_t[{\omega},\z
] }$
survive.
% Since ${\omega} \not\in\cB$, for all time $t \geq0$
% this graphs has all clusters of finite cardinality. If the domain
% $[x,x']$ does not belong to $\cW_t[{\omega},\z] $, then we impose that
% $x,x' \in\xi(t )[\z, {\omega}]$.
Let us now fix a cluster~$\cC$ in
the graph $\cW_t[{\omega},\z]$.
% and write $C$ for the set of points that
% are extremes of some domain in $\cC$.
The path $( \xi(s )\cap\bar\cC\dvtx s \in[0,t])$ is
implicitly defined by
the following rules (the definition is well posed since
${\omega} \in\cB$).
If $s \in[0,t]$ equals~$T ^{(\D)}_m$ with $\Delta=[x,x'] \in\cC$
and $x ,x'\in\xi(s-)$, then the ring at time~$T^{(\D)}_m$ is
called \textit{legal} if
%
%e3.2 ###
%
\begin{equation} \label{equniformbis}
U_m^{(\D)} \leq\frac{ {\lambda}_\ell^* ( x'-x)}{\|{\lambda}\|
_\infty},
\end{equation}
and in this case we set $\xi(s) \cap\bar\cC:=(\xi(s-) \cap\bar
\cC) \setminus\{x\}$, otherwise we set $\xi(s) \cap\bar\cC=
\xi(s-) \cap\bar\cC$. In the first case, we say that $x$ is erased
and that the domain $[x,x']$ has incorporated the domain on its
left. Similarly, if $s \in[0,t]$ equals~$\bar T ^{(\D)}_m$ with
$\Delta=[x,x'] \in\cC$ and $x,x' \in\xi(s-)$, then the ring at
time $\bar T^{(\D)}_m$ is called \textit{legal} if
%
%e3.3 ###
%
\begin{equation} \label{equniform2bis}
\bar U_m^{(\D)} \leq\frac{ {\lambda}_r^* ( x'-x)}{\|{\lambda}\|
_\infty},
\end{equation}
and in this case we set $\xi(s) \cap\bar\cC:=(\xi(s-) \cap\bar
\cC) \setminus\{x'\}$, otherwise we set $\xi(s) \cap\bar\cC=
\xi(s-) \cap\bar\cC$. Again, in the first case we say that $x'$
is erased and that the domain $[x,x']$ has incorporated the domain
on its right.

We point out that $\bar\cC\cap\bar\cC'= \varnothing$ if $ \cC$ and
$ \cC'$ are distinct clusters in $\cW_t[{\omega},\z]$. On the other hand,
it could be $\bar\cC\cap\overline{ \cV_t[{\omega},\z]}\not=
\varnothing$. Let $x $ be a point in the intersection, and suppose for
example that $[a,x] \in\cC$ while $[x,b]\in\cV_t[{\omega},\z] $. Then,
by definition of $\cW_t [{\omega}, \z]$, one easily derives that the
Poisson processes associated to the domains $[a,x] $ and $[x,b]$ do
not intersect $[0,t]$, while at least one of the Poisson processes
associated to the domain on the left of $[a,x] $ intersects $[0,t]$.
In particular, $x \in\xi(s) \cap\bar\cC$ for all $s \in[0,t]$,
in agreement with (\ref{heidi}). The same conclusion is reached if
$[a,x]\in\cV_t[{\omega},\z] $ and $[x,b] \in\cC$. This allows to
conclude that the definition of
the path $\{\xi(s)\}_{s\ge0}$ up to time $t$ is well posed. We point
out that this definition
is $t$-dependent. The reader can easily check
that, increasing $t$, the resulting paths coincide on the
intersection of their time domains. Joining these paths together we get
$\{\xi(s)\}_{s\ge0}$.

At this point, it is simple to check that, given a configuration $\z
\in\cN(d_{\min})$, the law of the corresponding
path $\{\xi(s)\}_{s\ge0}$ is that of the one-epoch
coalescence process defined in Section \ref{annarella}
%C and Section \ref{fondamenta}
with initial condition $\z$. The advantage of the above construction
is that all one-epoch coalescence processes, obtained by varying
the initial configuration, can be realized on the the same
probability space. Given a probability measure $\cQ$ on $\cN(d_{\min
})$, the one-epoch coalescence process with initial distribution
$\cQ$ can be realized by the random path $\{\xi^{\cdot} (s,\cdot
)\}_{s\ge0}$, defined on the product space ${\Omega} \times\cN
(d_{\min})$, endowed with the probability measure $P\times\cQ$.

\subsection{\texorpdfstring{Proof of Theorem \protect\ref{teo1}\textup{(i)--(iii)}}{Proof of Theorem 2.13(i)--(iii)}}\label{dimteo1i}

Before presenting the proof of Theorem \ref{teo1}(i)--(iii) we
state and prove a key lemma.
\begin{Lemma}[(Separation effect)]\label{sepeff}
For any $x \in\bbR$, any configuration $\z\in\cN( d_{\min} )$
with $x \in\z$, any event $\mathcal{A}$ in the $\s$-algebra
generated by
$\{\xi(s)\cap(-\infty,x) \}_{s\leq t}$, any event $\mathcal{B}$ in
the $\s$-algebra generated by
$\{\xi(s)\cap(x,\infty) \}_{s\leq t} $, it holds
%
%e3.4 ###
%
\begin{eqnarray}\label{dd}
&&\bbP_{\z}\bigl(\mathcal{A}\cap\cB\cap
\{ x\in\xi(t)\}\bigr)\nonumber\\[-8pt]\\[-8pt]
&&\qquad=
\bbP_{\z\cap(-\infty, x]}\bigl(\cA\cap\{ x\in\xi(t)\}\bigr)
\bbP_{\z\cap[x, \infty)}\bigl(\cB\cap\{x\in
\xi(t)\}\bigr) .\nonumber
\end{eqnarray}
\end{Lemma}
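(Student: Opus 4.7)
\smallskip
\textbf{Proof plan.} I would exploit the universal coupling of Section \ref{sec:domaindyn}: on a common probability space $(\Omega,\cF,P)$ every initial domain $\Delta$ of $\z$ carries its own independent pair of Poisson processes $\cT^{(\Delta)},\bar\cT^{(\Delta)}$ together with associated uniforms. First I would fix $\z$ with $x\in\z$, denote by $y,z$ the left and right neighbors of $x$ in $\z$, and set $\Delta_L=[y,x]$, $\Delta_R=[x,z]$. Let $\cF_L$ be the $\sigma$-algebra generated by the Poisson processes and uniforms attached to initial domains of $\z$ with right endpoint $\leq x$ (in particular those of $\Delta_L$), and $\cF_R$ the analogous object for initial domains with left endpoint $\geq x$ (in particular those of $\Delta_R$). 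Since $\Delta_L\neq\Delta_R$, the two collections of random variables are disjoint, so $\cF_L$ and $\cF_R$ are independent.

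The core step is to show that on the event $\{x\in\xi(t)\}$ the following holds:
\begin{enumerate}[(a)]
\item the restricted path $\bigl(\xi(s)\cap(-\infty,x]\bigr)_{s\leq t}$ is $\cF_L$-measurable, and $\bigl(\xi(s)\cap[x,\infty)\bigr)_{s\leq t}$ is $\cF_R$-measurable;
\item one can write $\{x\in\xi(t)\}=E_L\cap E_R$ with $E_L\in\cF_L$, $E_R\in\cF_R$, where $E_L$ is the event ``no legal ring on $\bar\cT^{(\Delta_L)}$ succeeds in erasing $x$ during $[0,t]$'' (phrased using only left-side data) and $E_R$ is the symmetric event on $\cT^{(\Delta_R)}$.
\end{enumerate}
The intuition is that a legal ring on an initial domain contained strictly in $(-\infty,x]$ (resp.\ $[x,\infty)$) only erases points strictly on the left (resp.\ right) of $x$; the only rings that a priori could couple the two sides are those on $\Delta_L$ and $\Delta_R$, which may erase $x$. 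Their legality rule involves the aliveness of $x$, but on $\{x\in\xi(t)\}$ this is automatic, so the firings on $\Delta_L$ depend only on the aliveness of $y$ (an $\cF_L$-measurable quantity), and symmetrically for $\Delta_R$.

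Granting (a)--(b), the factorization follows at once from the independence of $\cF_L$ and $\cF_R$:
\begin{equation*}
\bbP_\z\bigl(\cA\cap\cB\cap\{x\in\xi(t)\}\bigr)=P(\cA\cap E_L)\,P(\cB\cap E_R).
\end{equation*}
Finally I would identify $P(\cA\cap E_L)$ with $\bbP_{\z_L}\bigl(\cA\cap\{x\in\xi(t)\}\bigr)$, where $\z_L:=\z\cap(-\infty,x]$: in the coupling driven by $\z_L$ the point $x$ is the rightmost element and its right domain is $[x,\infty]$, inactive by (A1'), so $x$ can only be erased from the left and $\{x\in\xi^{\z_L}(t)\}$ coincides with the natural analogue of $E_L$. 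The i.i.d.\ structure of the Poisson family absorbs the relabeling induced by passing from $\z$ to $\z_L$. The symmetric identification on the right completes the proof.

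The main obstacle will be the rigorous verification of item (a), in particular ensuring that the rings on $\Delta_L$ and $\Delta_R$ do not secretly transmit information between $\cF_L$ and $\cF_R$ through the aliveness of $x$. I would handle this by induction on the successive jump times of $\xi^\z$ in $[0,t]$: assuming the restricted trajectories are measurable w.r.t.\ the respective $\sigma$-algebras up to a given jump, the survival of $x$ forces the next transition on the left (resp.\ right) of $x$ to be triggered by an $\cF_L$- (resp.\ $\cF_R$-) measurable random variable; this propagates the inductive hypothesis, at the cost of a somewhat bookkeeping-heavy case analysis.
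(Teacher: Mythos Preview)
Your approach is correct and is essentially the same as the paper's, though organized differently. The paper works directly at the pathwise level: it states two deterministic facts about the universal coupling, namely (i) that whenever $x\in\xi^\z(t,\o)$ one has $\xi^{\z_\ell}(s,\hat\o)=\xi^\z(s,\o)\cap(-\infty,x]$ for all $s\le t$, provided $\hat\o$ is obtained from $\o$ by the explicit index shift $\cT^{(i)}(\o)=\cT^{(i+j-k)}(\hat\o)$, etc., and symmetrically on the right; and (ii) the converse reconstruction of $\o$ from such $\hat\o,\tilde\o$. The factorization then follows because the two index-shifted families are disjoint and i.i.d. Your $\sigma$-algebra formulation $\cF_L,\cF_R$ and the decomposition $\{x\in\xi(t)\}=E_L\cap E_R$ encode exactly the same content: your $E_L$ is the event $\{x\in\xi^{\z_L}(t)\}$ read through the relabeling, and your claim (a) is precisely the paper's fact (i).

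Two remarks. First, your phrasing ``on the event $\{x\in\xi(t)\}$ the restricted path is $\cF_L$-measurable'' needs the interpretation you implicitly use later: there is an $\cF_L$-measurable path (the $\z_L$-dynamics) that coincides with $\xi^\z(\cdot)\cap(-\infty,x]$ on $\{x\in\xi(t)\}$; this is what makes $\cA\cap E_L\cap E_R=\cA'\cap E_L\cap E_R$ with $\cA'\in\cF_L$. Second, the induction on jump times you propose is not strictly necessary: since clusters in $\cW_t[\o,\z]$ are almost surely finite and the rules are purely local, the pathwise identities (your (a)--(b), the paper's (i)--(ii)) can be checked cluster by cluster, which avoids any issue with accumulation of jumps. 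The paper does not spell this out either; both proofs leave the verification at the same level of detail.
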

\begin{pf} We set
$\z_\ell:= \z\cap(-\infty,x]$, $\z_r := \z\cap[x, \infty)$,
$k:=\cN(x,\z)$, $j:=\mathcal{N}(x,\z_\ell)$ and
$u:=\mathcal{N}(x,\z_r )$.
The desired result (\ref{dd}) is implied by the following facts (i)
and (ii):

\begin{longlist}
\item
For any $\omega\in{\Omega}$ such $x\in\xi^\z(t,\omega)$ the
following holds. At each time $s \in[0,t]$ one has
\[
\xi^{\z_\ell}
(s,\hat\omega)= \xi^{\z}(s, \omega)\cap(-\infty,x],
\]
if $\hat
{\omega}$ satisfies for any $i <k$ and $m \in\bbN$
%
%e3.5 ###
%
\begin{equation}\label{chiaulis1}
\cT^{(i)}({\omega}) =\cT^{(i+j-k)}(\hat{\omega}) ,\qquad
% \bar{\cT}^{(i)}({\omega}) =\bar{\cT}^{(i+j-k)}(\hat{\omega}) ,
%
U_{m}^{(i+j-k)}({\omega})=U_m^{(i+j-k)}(\hat{\omega}) ,
% \bar U_{m}^{(i)}({\omega}) =\bar U_m^{(i+j-k)}(\hat{\omega} ) .
\end{equation}
and the same identities with $\cT$ and $U_m^{(\cdot)}$ replaced by
$\bar\cT$
and $\bar U_m^{(\cdot)}$. Similarly, at each time $s\in[0,t]$ one has
\[
\xi^{\z_r } (s,\tilde\omega)= \xi^{\z}(s, \omega)\cap[x, \infty),
\]
if
$\tilde{\omega}$ satisfies for any $i \geq k $ and $m \in\bbN$
%
%e3.6 ###
%
\begin{equation}\label{chiaulis2}
\cT^{(i)}({\omega})=\cT^{(i+u-k )}(\tilde{\omega}) ,\qquad
U_{m}^{(i)}({\omega})=U_m^{(i+u-k )} (\tilde{\omega}) ,
% U_{m}^{(i)}({\omega}) =\bar U_m^{(i+\ell-k)}(\tilde{\omega}) .
\end{equation}
and the same identities with $\cT$ and $U^{(\cdot)}_m$ replaced by
$\bar\cT$
and $\bar U^{(\cdot)}_m$.

\item Take $\hat\omega,\tilde\omega\in{\Omega}$ such that $x\in
\xi^{\z_\ell}(t,\hat
\omega)$ and
$x\in\xi^{\z_r}(t,\tilde\omega)$.
At each time $s \in[0,t]$ it holds
\[
\xi^\z(s,\omega)= \xi^{\z_\ell} (s,\hat\omega)\cup\xi^{\z_r}
(s,\tilde\omega),
\]
if ${\omega}\in{\Omega} $ satisfies (\ref{chiaulis1}) and the
same identities with $\cT$ and $U^{(\cdot)}_m$ replaced by~$\bar\cT
$ and $\bar U^{(\cdot)}$ for any $i< k$ and $m \in\bbN$,
and ${\omega}$ satisfies
(\ref{chiaulis2}) and the same identities with $\cT$ and $U^{(\cdot)}_m$
replaced by $\bar\cT$ and $\bar U^{(\cdot)}_m$ for any $i \geq k$
and $m \in
\bbN$.~\qed
\end{longlist}
\noqed\end{pf}

We first prove the renewal property for the OCP with initial
distribution $\cQ= \operatorname{Ren} (\nu, \mu)$. We take the special
realization of the process defined by means of the universal
coupling at the end of
the previous section. We concentrate on the joint
distribution of the random variables $x_0(t), d_1(t), d_2 (t)$,
proving that they are independent and giving an expression of their
marginal distributions. We recall that $x_0(t)$ is the leftmost point
of $\xi(t)$, while $ d _k (t)$ is the length of the $k$th domain
to the right of $x_0(t)$ in $ \xi(t)$.

While $d_1(t), d_2 (t)$ are nonnegative random variables and their
Laplace transforms are always finite, $x_0(t)$ is a real random
variable and its Laplace transform could diverge. Hence, it is
convenient to work with characteristic functions instead of Laplace
transforms. Given imaginary numbers $s_0,s_1, s_2 \in i \bbR$,
we have
%
%e3.7 ###
%
\begin{eqnarray}\label{schianto}
&&\bbE_{\cQ} \bigl( e^{-s_0 x_0 (t)- s_1 d_1(t)-s_2 d_2 (t)}
\bigr) \nonumber\\
&&\qquad=\sum_{i_0 <i_1<i_2 \in\bbN} \bbE_{\cQ} \bigl(e^{-s_0 x_0
(t)-s_1 d_1(t)-s_2 d_2 (t)}; x_0(t)= x_{i_0}(0);\nonumber\\[-8pt]\\[-8pt]
&&\hspace*{137pt} x_1(t)=
x_{i_1}(0); x_2(t)= x_{i_2} (0)\bigr)
\nonumber\\
&&\qquad= \sum_{i_0 <i_1<i_2 \in\bbN} \int\cQ(d\z) e^{-s_0 x_{i_0}
-s_1 (x_{i_1}-x_{i_0} )-s_2(x_{i_2}-x_{i_1} ) }
f_{i_0,i_1,i_2}(\z) ,\nonumber
\end{eqnarray}
where $\z=\{x_k \dvtx k \in\bbN\}$ and the function
$f_{i_0,i_1,i_2} (\z)$ is defined as the $P$-probability of the
event $\cU$ in ${\Omega}$ given by the elements ${\omega}$
satisfying the
following properties:
\begin{eqnarray*}
\mbox{(P1)}\quad \xi^\z(t,{\omega}) \cap(-\infty, x_{i_0}] &=& \{ x_{i_0}
\} ,\\
\mbox{(P2)}\quad \xi^\z(t,{\omega}) \cap[x_{i_0}, x_{i_1}] &=& \{
x_{i_0},x_{i_1} \} ,\\
\mbox{(P3)}\quad \xi^\z(t,{\omega}) \cap[x_{i_1}, x_{i_2}] &=& \{
x_{i_1},x_{i_2} \} .
%&(P4) x_{i_2}\in\xi(t)[{\omega}, \z] .
\end{eqnarray*}
Let us now set
\begin{eqnarray*}
\z_0&=&\z\cap(-\infty, x_{i_0}] ,\qquad \z_{0,1}=\z\cap[x_{i_0}, x_{i_1}]
, \\
\z_{1,2}  &=&\z\cap[x_{i_1}, x_{i_2}] , \qquad \z_2=\z\cap
[x_{i_2}, \infty) .
\end{eqnarray*}
Then, by the separation effect described in Lemma \ref{sepeff},
one has
%$$ \cU(\z)= \{ {\omega} \in{\Omega} : {\omega} \mbox{
%fulfills } (P1'), (P2'),
%(P3'),(P4')\} ,$$ where
%&(P1') \xi^{\z_0 }(t,{\omega}) = \{ x_{i_0} \} ,\\
%&(P2') \xi^{\z_{0,1} }(t,{\omega}) = \{ x_{i_0},x_{i_1} \} ,\\
%&(P3') \xi^{ \z_{1,2}} (t,{\omega}) = \{ x_{i_1},x_{i_2} \} ,\\
%&(P4') x_{i_2} \in\xi^{ \z_2} (t,{\omega}) .
%Now we observe that the properties $(P1'),(P2'),(P3')$ and $ (P4')
%$ depend only on the random objects $\cT^{(\D)}$, $\bar
%belonging to $\z$ and staying inside the regions $(-\infty,x_{i_0}
%]$, $[x_{i_0}, x_{i_1}]$, $[x_{i_1},x_{i_2}]$ and $[x_{i_2},
%
%e3.8 ###
%
\begin{equation}
\label{rev1}
f_{i_0,i_1,i_2}(\z)= P( \cU(\z) )= \prod_{i=1}^4 P\bigl(
{\omega}\in{\Omega}\dvtx{\omega}
\mbox{ fulfills } \mbox{(P$i'$)}\bigr) ,
\end{equation}
where
\begin{eqnarray*}
&&\mbox{(P1$'$)}\quad \hspace*{5.9pt}\xi^{\z_0 }(t,{\omega}) = \{ x_{i_0} \} ,\\
&&\mbox{(P2$'$)}\quad \xi^{\z_{0,1} }(t,{\omega}) = \{ x_{i_0},x_{i_1} \} ,\\
&&\mbox{(P3$'$)}\quad \xi^{ \z_{1,2}} (t,{\omega}) = \{ x_{i_1},x_{i_2} \}
,\\
&&\mbox{(P4$'$)}\quad \hspace*{32pt}x_{i_2} \in\xi^{ \z_2} (t,{\omega}) .
\end{eqnarray*}

We stress that the factors in (\ref{rev1}) are $\z$-dependent, although
we have omitted $\z$ from the notation. In particular, the
probability $P( {\omega}\in{\Omega}\dvtx{\omega} \mbox{
fulfills } \mbox{(P$i'$)})$ depends on $\z$ only through the first point
$x_0$ and the domain lengths $d_1,d_2,\ldots,\allowbreak d_{i_0}$ if $i=1$, the
domain lengths $d_{i_0+1},\ldots, d_{i_1}$ if $i=2$, the domain
lengths $d_{i_1+1},\ldots, d_{i_2}$ if $i=3$ and the domain lengths
$d_{i_2+1}, d_{i_2+2},\ldots$ if $i=4$. Thinking of $\z$ as a random
configuration sampled by $\cQ$, all the above domain lengths are
i.i.d. with law $\mu$ and are independent from $x_0$ which has law
$\nu$. In particular, the random variables $\z\to P( {\omega}\in
{\Omega}\dvtx{\omega}
\mbox{ fulfills (P$i'$)})$ are independent for $i=1,\ldots, 4$. Using
the consequent factorization and integrating over $\z$ in
(\ref{schianto}), we conclude that
%
%e3.9 ###
%
\begin{eqnarray}\label{evviva}
&&
\bbE_{\cQ} \bigl(e^{-s_0 x_0 (t)- s_1 d_1(t)-s_2 d_2 (t)}
\bigr)\nonumber\\
&&\qquad = \sum_{i_0 <i_1 <i_2 \in\bbN} \int\cQ(d\z)
P\bigl( {\omega}\in{\Omega}\dvtx{\omega} \mbox{ fulfills
(P4$'$)}\bigr)\nonumber\\
&&\hspace*{41.6pt}\qquad\quad{} \times
\int\cQ(d\z) e^{-s_0 x_{i_0}} P\bigl( {\omega}\in{\Omega}\dvtx
{\omega} \mbox{
fulfills
(P1$'$)}\bigr)\\
&&\hspace*{41.6pt}\qquad\quad{} \times\int\cQ(d\z)
e^{-s_1(x_{i_1}-x_{i_0}) } P\bigl( {\omega}\in{\Omega}\dvtx{\omega}
\mbox{ fulfills
(P2$'$)}\bigr) \nonumber\\
&&\hspace*{41.6pt}\qquad\quad{} \times\int\cQ(d\z)
e^{-s_2(x_{i_2}-x_{i_1}) } P\bigl( {\omega}\in{\Omega}\dvtx{\omega}
\mbox{ fulfills
(P3$'$)}\bigr) .\nonumber
\end{eqnarray}
By simple computations and using that $\cQ= \operatorname{Ren}(\nu,\mu)$,
from the above identity we derive that
%
%e3.10 ###
%
\begin{equation}\label{bellalatorta}
\bbE_{\cQ} \bigl( e^{-s_0 x_0 (t)- s_1 d_1(t)-s_2 d_2 (t)}
\bigr)=\hat L_t (s_0) \hat G_t (s_1) \hat G_t (s_2),
\end{equation}
where
%
%e3.12 ###
%e3.11 ###
%
\begin{eqnarray}
\label{gormito1zac}
\hat L_t (s)&=& \bbP_{\operatorname{Ren} ({\delta}_0,\mu) } \bigl( 0 \in\xi
(t) \bigr)L_0(s)
\sum_{n\geq0}
\bbE_{\otimes_n \mu} \bigl( e^{- s x_n (0) }; \xi(t)= \{ x_n(0)
\}
\bigr) ,\hspace*{-40pt}
\\
\label{gormito2zac}
\hat G_t(s)&=&\sum_{n\geq1} \bbE_{\otimes_n \mu} \bigl( e^{- s x_n
(0) } ; \xi(t)=\{ 0,x_n(0)
\}\bigr) .\hspace*{-40pt}
\end{eqnarray}
Above $L_0(s)$ denotes the characteristic function of $\nu$, while
$\otimes_n \mu$ denotes the law of the SPP given by $n+1$ points
$0=x_0<x_1<\cdots<x_n$ such that the random variables
$d_i=x_{i}-x_{i-1}$, $1\leq i \leq n$, are i.i.d. with common law
$\mu$.

Note that in the derivation of (\ref{gormito1zac}) one has to
keep the contribution of both the first and the second expectation
in the right-hand side of (\ref{evviva}).

By similar arguments, one obtains
%
%e3.13 ###
%
\begin{equation}\qquad
\bbE_{\cQ} \bigl( e^{-( s_0 x_0 (t) + s_1 d_1(t)+\cdots+ s_k
d_k(t)) }\bigr)=\hat L_t (s_0)\prod_{i=1}^k \hat G_t(s_i)\qquad
\forall k \geq0
\end{equation}
with the convention that the last product over $k$ is equal to $1$
if $k=0$. The above formula implies that the random variables $x_0
(t), d_1(t), d_2(t), \ldots$ are all independent, $x_0(t)$ has
characteristic function $\hat L_t$ and $d_k(t)$ has characteristic
function $\hat G_t$
for each $k \geq1$. %We point out that the function $G_t$ depends only
%on $\mu$.
Note that the above arguments remain valid for $s_0,s_1,\ldots, s_k
\geq0$ (and one speaks of Laplace transforms instead of
characteristic functions), but if $\bbE( e^{-s_0 x_0(0) })=\infty$
we get the trivial identities $\infty=\infty$.\looseness=-1

%C
(ii)--(iii) We consider now the case $\cQ= \operatorname{Ren}(\mu)$. Points
are now labeled in
increasing order with the convention that $x_0$ denotes the largest
nonpositive point.
Similarly to the
above proof, one can show that the random variables $d_k (t)$,
$k\not=1$, are i.i.d. and are independent from the random variable
$x_1(t)-x_0(t)$. Moreover, their common law has Laplace transform
(\ref{gormito2zac}). On the other hand, due to the definition of
the dynamics, $\xi_t$ must be a stationary SPP. As a byproduct, we
conclude that the law of $\xi_t$ is $\operatorname{Ren} (\mu_t)$,
$\mu_t$ being a probability measure on $(0, \infty)$ with Laplace
transform (\ref{gormito2zac}). The case $\cQ=\operatorname{Ren}_\bbZ(\mu)$
can be treated analogously.
%C , thus concluding the proof of Theorem \ref{teo1} (i).

%C This concludes the proof of Theorem \ref{teo1} (i).

It is convenient to isolate a technical fact derived in the above
proof, which will be the starting point in the proof of Theorem
\ref{teo1bis}:
\begin{Lemma}\label{argentina}
Recall that $G_t(s)= \int_{[\inf(\cA),\infty)} \hspace*{-0.6pt}e^{-sx}\mu_t(x) $
and
%Cfor $L_t(s)= \int e^{-s x} \nu_t (s) $
$L_t(s)=\break \int_{\cA} e^{-s x} \hspace*{-0.6pt}\times$ $\nu_t (x) $ $(s\in\bbR_+\cup i\bbR)$.
Then
%
%e3.15 ###
%e3.14 ###
%
\begin{eqnarray}
\label{gormito1}
L_t (s)&=& \bbP_{\operatorname{Ren} ({\delta}_0,\mu) } \bigl( 0 \in\xi(t) \bigr)L_0(s)
\sum_{n\geq0}
\bbE_{\otimes_{n}\mu} \bigl( e^{- s x_n (0) } ; \xi(t)= \{ x_n(0)
\}
\bigr) ,\hspace*{-40pt}
\\
\label{gormito2}
G_t(s)&=&\sum_{n\geq1} \bbE_{\otimes_{n}\mu} \bigl( e^{- s x_n (0)
} ; \xi(t)=\{ 0,x_n(0)
\}\bigr) ,\hspace*{-40pt}
\end{eqnarray}
where $\otimes_{n}\mu$ denotes the law of the SPP given
by $n+1$ points $0=x_0<x_1<\cdots<x_n$ such that the random
variables $d_i=x_{i}-x_{i-1}$, $1\leq i \leq n$, are i.i.d. with
common law $\mu$.
\end{Lemma}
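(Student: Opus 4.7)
The plan is to extract Lemma~\ref{argentina} as a direct byproduct of the calculation already carried out in the proof of Theorem~\ref{teo1}(i). There, starting from the universal coupling of Section~\ref{sec:domaindyn} and applying Lemma~\ref{sepeff} (the separation effect), we decomposed
\[
\bbE_\cQ\bigl(e^{-s_0 x_0(t) - s_1 d_1(t) - s_2 d_2(t)}\bigr)
\]
by summing over the labels $i_0<i_1<i_2$ of the points of $\xi(0)$ that end up as the three leftmost points of $\xi(t)$, factorized the coupling probability over the four segments $(-\infty,x_{i_0}]$, $[x_{i_0},x_{i_1}]$, $[x_{i_1},x_{i_2}]$, $[x_{i_2},\infty)$, and used independence of the renewal increments under $\cQ=\text{Ren}(\nu,\mu)$ to arrive at the product form $\hat L_t(s_0)\,\hat G_t(s_1)\,\hat G_t(s_2)$, with $\hat L_t,\hat G_t$ given by the right-hand sides of \eqref{gormito1zac}--\eqref{gormito2zac}.

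To finish, I would appeal to Theorem~\ref{teo1}(i), which asserts that under $\bbP_\cQ$ the configuration $\xi(t)$ is itself $\text{Ren}(\nu_t,\mu_t)$. Consequently the characteristic function of $x_0(t)$ is $L_t(s_0)$ and that of any $d_k(t)$ is $G_t(s_k)$, and they are independent. Matching the two factorizations forces $\hat L_t \equiv L_t$ and $\hat G_t \equiv G_t$, which is exactly \eqref{gormito1}--\eqref{gormito2}. The extension of the identity from $s\in i\bbR$ (characteristic functions) to $s\in \bbR_+$ (Laplace transforms) is read pointwise in the extended space $[0,\infty]$: in \eqref{gormito2} only nonnegative domain lengths in $\cA$ occur, so the series converges in $\bbR_+$ and equals $G_t(s)$ by monotone convergence; in \eqref{gormito1} both sides are possibly infinite exactly when $L_0(s)=\infty$, yielding a consistent identity.

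I do not anticipate any substantive obstacle: the nontrivial work (the separation effect plus the renewal factorization) has already been performed in the proof of Theorem~\ref{teo1}. The only item meriting a line of care is the justification of the countable sums over $n$ in \eqref{gormito1}--\eqref{gormito2}: they are indexed by disjoint events distinguished by how many original points are absorbed, respectively, into the leftmost surviving point or into the first surviving interval, so termwise summation is legitimate and produces the stated $L_t$ and $G_t$ without any further hypothesis on $\mu$ or $\nu$.
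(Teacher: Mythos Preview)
Your proposal is correct and is precisely the paper's own approach: Lemma~\ref{argentina} is explicitly introduced in the text as ``a technical fact derived in the above proof'' of Theorem~\ref{teo1}(i), where the factorization via the separation effect (Lemma~\ref{sepeff}) and the renewal independence under $\cQ=\text{Ren}(\nu,\mu)$ produce the identities \eqref{gormito1zac}--\eqref{gormito2zac}, and the identification $\hat L_t=L_t$, $\hat G_t=G_t$ is immediate since these are by construction the characteristic functions of $\nu_t,\mu_t$. Your remark on the extension from $s\in i\bbR$ to $s\in\bbR_+$ (with the possible $\infty=\infty$ reading for $L_t$) also matches the paper's closing comment in that proof.
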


%s3.3 ###
\subsection{\texorpdfstring{Proof of Theorem \protect\ref{teo1}\textup{(iv)}}{Proof of Theorem 2.13(iv)}} Suppose that $\cQ=
\operatorname{Ren}({\delta}_0,\mu)$. Then we can write
%Writing
%$$ \bbE_{\cQ} (
%e^{-\sum_{j=1} ^k s_j d_j (t) } | 0\in\xi(t) )= \frac{
%)}{ \bbP_{\cQ} ( 0\in\xi(t) )}
%$$
%one can evaluate both the numerator and the denominator by the same
%arguments used in the proof of Theorem \ref{teo1} $(i)$. We sketch
%the proof for $k=1$:
%
%e3.16 ###
%
\begin{eqnarray}\label{risoinbianco}
&&\bbE_{\cQ} \bigl( e^{- s d_1 (t) } ; 0\in\xi(t) \bigr)\nonumber
\\
&&\qquad= \sum_{i
\in\bbN_+} \bbE_{\cQ}\bigl( e^{-s x_i(0) } ; 0 \in\xi(t) ;
x_1(t)= x_i (0) \bigr)\\
&&\qquad=\sum_{i\in\bbN_+} \int\cQ(d \z) e^{-s
x_i} \bbP_\z\bigl( \xi(t)\cap[0, x_i]=\{0,x_i\} \bigr) ,
\nonumber
\end{eqnarray}
where $\z=\{x_k \dvtx k \geq0 \}$.
%We define $\z_-= \z\cap
%(-\infty, x_i]$ and $\z_+:= \z\cap[x_i, \infty)$.
By the separation
effect described in Lemma \ref{sepeff},
% and the independence in the construction of the universal coupling,
we can write the last probability inside the integrand in
(\ref{risoinbianco}) as
%P( {\omega} :\xi(t)[{\omega}, \z\cap[0, x_i]] =\{0, x_i\} ,
%x_i \in
%
\[
\bbP_{\z\cap[0, x_i]} \bigl( \xi(t) =\{0, x_i\} \bigr) \bbP_{ \z
\cap[x_i, \infty)} \bigl( x_i \in\xi(t)\bigr) .
%P( {\omega} : x_i
\]
We observe that the last two factors, as functions of $\z$, are
$\cQ$-independent. Moreover, for all $i\in\bbN_+$, it holds
\[
\int\cQ( d\z) \bbP_{ \z\cap[x_i, \infty)} \bigl( x_i \in\xi
(t)\bigr)
= \bbP_{\cQ} \bigl(0\in\xi(t)\bigr) .
\]
Therefore, coming back to
(\ref{risoinbianco}), using the renewal property of $\cQ$ and
(\ref{gormito2}), we get
\begin{eqnarray*}
&&\bbE_{\cQ} \bigl( e^{- s d_1 (t) } \mid 0\in\xi(t) \bigr)\\
&&\qquad=\sum
_{i \in\bbN_+}\int\cQ(d \z) \bbP_{\z\cap[0, x_i]} \bigl( \xi(t)
=\{0, x_i\} \bigr) = G_t (s) .
\end{eqnarray*}
By similar arguments, one gets
\[
\bbE_\cQ\bigl(
e^{-\sum_{j=1}^k s_j d_j(t) } \mid 0\in\xi(t) \bigr)= \prod_{j=1}^k
G_t (s_j) ,\qquad s_1,\ldots, s_k \in\bbR_+\cup i\bbR,
\]
thus concluding the proof of Theorem \ref{teo1}(ii).

%s3.4 ###
\subsection{\texorpdfstring{Proof of Theorem \protect\ref{teo1}\textup{(v)}}{Proof of Theorem 2.13(v)}}

From (\ref{gormito1}) and (\ref{gormito2}) we get that $L_t(s)$ and
$G_t(s)$ converge to $L_\infty(s)$ and $G_\infty(s)$ as $t \to
\infty$. This implies the weak convergence to $\nu_t$ and $\mu_t$ to
$\nu_\infty$ and $\mu_\infty$.

%s4 ###
\section{\texorpdfstring{Recursive identities in the OCP: Proof of Theorem \protect\ref{teo1bis}}{Recursive identities in the OCP: Proof of Theorem 2.14}}\label{brunatecchio}
The proof is based on the
identities (\ref{gormito1}) and (\ref{gormito2}) in Lemma
\ref{argentina}. We first point out a blocking phenomenon in the
dynamics that will be frequently used in what follows.\vadjust{\goodbreak} Due to
assumption %C (A2')
(A1$'$), a separation point $x$ between two inactive
domains cannot be erased. As simple consequence, we obtain that the
points between two nearest neighbor inactive domains cannot all be
erased: if %C aggiunta
there exists $s\geq0$ s.t. $[a,b]$ and $[c,d]$ are inactive domains
(including the
cases $a=-\infty$, $d=\infty$) with $b\leq c$, then $\xi(\infty)
\cap[b,c]\not= \varnothing$.
%C aggiunta
Indeed the set $[b,c]\cap\xi$ is nonempty (since $b$ and $c$ belongs
to it) and if we assume that all points in this set are killed, then
the last one to be killed is for sure a~separation point between two
inactive domains and a contradiction arises.
We will frequently use this fact below.

By Lemma \ref{argentina} we can write, for $s \in\bbR_+\cup i \bbR$,
%
%e4.1 ###
%
\begin{eqnarray}\label{colazione1}
G_\infty
(s) &=& \sum_{k=0}^\infty A_k (s) ,\nonumber\\[-8pt]\\[-8pt]
A_k(s) &=&\bbE_{\otimes
_{k+1}\mu} \bigl( e^{- s x_{k+1} (0) } ; \xi(\infty)=\{
0,x_{k+1}(0) \}\bigr) .\nonumber
\end{eqnarray}
We explicitly compute $A_k(s)$. To this aim we consider the
one-epoch coalescence process with law
$\bbP_{\otimes_{k+1}\mu} $.
We observe that, due to the blocking phenomenon, the
event $\xi(\infty)=\{ 0, x_{k+1}(0)\}$ implies that (i) $k\geq1$,
and the $k+1$ initial domains are all active, or (ii) $k\geq0$, and
initially there are $k $ active domains and one inactive domain.
Therefore, given $k \geq0$ and $1\leq j \leq k+1$, we introduce the
following events:
\begin{eqnarray*}
F_k &=&\{ d_1 (0),d_2(0),\ldots, d_{k+1} (0) \in\cA\}
\cap\bigl\{ \xi(\infty)=\{0, x_{k+1}(0)\} \bigr\} ,\\
E_{k, j} & =&\bigl\{ d_i(0) \in\cA\mbox{ }\forall i \in\{1,\ldots,
k+1\} \setminus\{j\}\bigr\}
\cap\{ d_j(0)\notin\cA\}\\
&&{}\cap\bigl\{ \xi(\infty)= \{0, x_{k+1}(0)\} \bigr\} .
\end{eqnarray*}
By the above discussion, it holds
%
%e4.2 ###
%
\begin{equation}\label{colazione2}\qquad A_k(s)= \bbE_{\otimes_{k+1}\mu}
\bigl( e^{- s x_{k+1} (0) } ; F_k
\bigr)\mathbh{1}_{k \geq1}+ \sum_{j=1}^{k+1} \bbE_{\otimes
_{k+1}\mu} \bigl( e^{- s x_{k+1} (0) } ; E_{k,j} \bigr) .
\end{equation}
The exact computation of the two addenda in the right-hand side is given in
the following lemmas:
\begin{Lemma}\label{pesante1} For each $k \geq1$, it holds
%
%e4.3 ###
%
\begin{equation} \bbE_{\otimes_{k+1}\mu} \bigl( e^{- s x_{k+1} (0)
} ; F_k \bigr)
= \frac{
[\int\mu(dx) e^{-sx } \mathbh{1}_{x \in\cA}]^{k+1} }{
(k+1)\cdot(k-1)!} .
\end{equation}
\end{Lemma}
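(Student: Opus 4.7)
The plan is to reduce the expectation to a one-dimensional integral by a first-step decomposition, then apply the integral representation $1/T=\int_0^\infty e^{-tT}dt$ with $T:=\sum_i\l(d_i)$, and finally collapse the result using the identity $\l_\ell+\l_r=\l$.

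First I set $q_k(\vec d):=\bbP(\xi(\infty)=\{0,x_{k+1}\}\mid\vec d)$, $c_i:=\l(d_i)$, and condition on which internal point $x_j$ (with $j\in\{1,\dots,k\}$) is erased by the first merger. Applying the separation effect of Lemma~\ref{sepeff} at the two endpoints of the newly created inactive block $[x_{j-1},x_{j+1}]$ makes the left and right sub-dynamics independent, yielding
\[
q_k(\vec d)=\frac{1}{T}\sum_{j=1}^k\bigl(\l_\ell(d_j)+\l_r(d_{j+1})\bigr)L_{j-1}(d_1,\dots,d_{j-1})R_{k-j}(d_{j+2},\dots,d_{k+1}),
\]
where $L_m(\vec d)$ is the probability that $m$ active domains wedged between two inactive blocks all get absorbed into the right inactive block while their leftmost endpoint is preserved, and $R_m$ is the symmetric object. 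An induction on $m$ using the blocking effect---any alternative attempt would freeze two adjacent inactive blocks---shows that the \emph{unique} admissible scenario for $L_m$ is: $\Delta_m$ eats right, then $\Delta_{m-1}$ eats right, \dots, finally $\Delta_1$ eats right, so
\[
L_m=\prod_{i=1}^m\frac{\l_\ell(d_i)}{c_1+\dots+c_i},\qquad R_m=\prod_{i=1}^m\frac{\l_r(d_i)}{c_i+\dots+c_m}.
\]

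Next, with the tilted probability $\tilde\mu(dx):=e^{-sx}\mathds{1}_{x\in\cA}\mu(dx)/I$ where $I:=\int e^{-sx}\mathds{1}_{x\in\cA}\mu(dx)$, the claim reduces to $\bbE_{\tilde\mu^{\otimes(k+1)}}[q_k]=1/((k+1)(k-1)!)$. Inserting $1/T=\int_0^\infty e^{-tT}\,dt$ and applying Fubini, the integrand factorises across the disjoint index blocks $\{1,\dots,j-1\}$, $\{j,j+1\}$, $\{j+2,\dots,k+1\}$. Exchangeability under the product measure combined with the symmetric-function identity
\[
\sum_{\sigma\in S_m}\prod_{i=1}^m\frac{1}{c_{\sigma(1)}+\dots+c_{\sigma(i)}}=\prod_{i=1}^m\frac{1}{c_i}
\]
(proved by an easy induction) gives
\[
\bbE_{\tilde\mu^{\otimes(j-1)}}\!\Bigl[L_{j-1}\prod_{i=1}^{j-1}e^{-tc_i}\Bigr]=\frac{A(t)^{j-1}}{(j-1)!},\quad A(t):=\bbE_{\tilde\mu}\!\Bigl[\tfrac{\l_\ell(d)}{\l(d)}e^{-t\l(d)}\Bigr],
\]
and symmetrically $B(t)^{k-j}/(k-j)!$ with $B(t):=\bbE_{\tilde\mu}[(\l_r/\l)e^{-t\l}]$ for the $R_{k-j}$ factor; the middle piece evaluates to $\phi(t)(\alpha(t)+\beta(t))=-\phi(t)\phi'(t)$ where $\phi(t):=\bbE_{\tilde\mu}[e^{-t\l(d)}]$.

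Finally, summing over $j$ via the binomial formula $\sum_{j=1}^k A^{j-1}B^{k-j}/((j-1)!(k-j)!)=(A+B)^{k-1}/(k-1)!$ and invoking the decisive identity $A(t)+B(t)=\phi(t)$, a direct consequence of $\l_\ell+\l_r=\l$, one obtains
\[
\bbE_{\tilde\mu^{\otimes(k+1)}}[q_k]=\frac{1}{(k-1)!}\int_0^\infty\bigl(-\phi'(t)\bigr)\phi(t)^k\,dt=\frac{\phi(0)^{k+1}-\phi(\infty)^{k+1}}{(k+1)(k-1)!}=\frac{1}{(k+1)(k-1)!}
\]
since $\phi(0)=1$ and $\phi(\infty)=0$ (because $\l>0$ on $\cA$). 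Multiplication by $I^{k+1}$ yields the lemma; the case $s\in i\bbR$ follows by analytic continuation. The main obstacle is the first step: pinning down the unique telescoping form of $L_m$ (and $R_m$) requires a careful inductive case analysis exploiting the blocking effect to rule out all non-monotone attempt orders and directions; once this explicit product form is in hand, the rest of the argument is essentially algebra.
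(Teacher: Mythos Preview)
Your proof is correct and follows a genuinely different route from the paper. The paper proceeds by enumerating \emph{all} admissible jump sequences (``strings'') at once: it shows that the admissible strings leading to $F_k$ are in bijection with $\{\ell,r\}^k$, then uses exchangeability of $\otimes_{k+1}\mu$ to rewrite the sum over strings as a single expectation, and finally invokes its combinatorial Lemma~\ref{lem:exchangeable}(a) to evaluate it. Your argument instead conditions on the \emph{first} erased point, splits the remaining evolution into the two independent pieces $L_{j-1}$ and $R_{k-j}$ (each with an explicit telescoping product form), and then uses the Laplace trick $1/T=\int_0^\infty e^{-tT}dt$ to decouple the denominators; the binomial collapse via $A(t)+B(t)=\phi(t)$ (a direct avatar of $\l_\ell+\l_r=\l$) and the final integral $\int_0^\infty(-\phi'\phi^k)\,dt$ replace the paper's combinatorial lemma. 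Both approaches ultimately rest on the same symmetric-function identity $\sum_{\sigma}\prod_i(c_{\sigma(1)}+\cdots+c_{\sigma(i)})^{-1}=\prod_i c_i^{-1}$ (this is exactly identity~(E.3) in the paper's appendix), but your route makes the recursive/Markov structure more visible, while the paper's string enumeration makes the full combinatorial structure explicit.

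One small remark: your appeal to ``the separation effect of Lemma~\ref{sepeff}'' after the first merger is not literally an application of that lemma, which is formulated as a statement conditional on a fixed point \emph{surviving}. What you actually use is the simpler observation that once the inactive block $[x_{j-1},x_{j+1}]$ is created it never rings, so the future evolutions of $\xi\cap(-\infty,x_{j-1}]$ and $\xi\cap[x_{j+1},\infty)$ are driven by disjoint clocks and are therefore independent by the Markov property. This is correct; just label it accordingly.
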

\begin{Lemma}\label{pesante2} For each $k \geq0$, it holds
%
%e4.4 ###
%
\begin{eqnarray}
&& \sum_{j=1}^{k+1} \bbE_{\otimes_{k+1}\mu} \bigl(
e^{- s x_{k+1} (0) } ; E_{k,j} \bigr)\nonumber\\[-8pt]\\[-8pt]
&&\qquad= \int\mu(dx) e^{-s x}
\mathbh{1}_{x \notin\cA} \frac{ [\int\mu(dx) e^{-sx }
\mathbh{1}_{x \in\cA}]^k }{ k!} .\nonumber
\end{eqnarray}
\end{Lemma}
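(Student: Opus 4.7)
The plan is to analyze $E_{k,j}$ directly in terms of the (independent) coin flips and timers attached to the active original domains, show that the success probability factorizes into independent ``left'' and ``right'' chain contributions, and then evaluate the resulting integrals via a clean symmetry argument.

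First I would identify, for each $j$, the precise event in the coupling that yields $\xi(\infty)=\{0,x_{k+1}(0)\}$ on $E_{k,j}$. Since $\D_j$ is inactive (rate $\l(d_j)=0$) and each active domain can act at most once (its post-merger length is inactive by (A2')), a short induction shows that the directions of the remaining $k$ actions are forced: every $\D_i$ with $i<j$ must go right and every $\D_i$ with $i>j$ must go left, and moreover their action times must satisfy $T_{\D_{j-1}}<\dots<T_{\D_1}$ and $T_{\D_{j+1}}<\dots<T_{\D_{k+1}}$ (each $\D_i$ has to fire before being absorbed by its neighbor closer to $\D_j$). Conversely, any such trajectory produces the desired outcome. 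Because the left-side data (timers and coins of $\D_1,\dots,\D_{j-1}$) are independent of the right-side data, the conditional success probability given the lengths factorizes as $P_L(j-1)\,P_R(k+1-j)$. Using the standard formula for the ordering probability of independent exponentials together with $p(d):=\l_\ell(d)/\l(d)$ and $q(d):=\l_r(d)/\l(d)$ one computes, with $S_i:=\l(d_1)+\dots+\l(d_i)$ and $\tilde S_i:=\l(d_i)+\dots+\l(d_m)$,
\begin{equation*}
P_L(m)=\prod_{i=1}^{m}\frac{\l_\ell(d_i)}{S_i},\qquad P_R(m)=\prod_{i=1}^{m}\frac{\l_r(d_i)}{\tilde S_i}.
\end{equation*}

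Next, to integrate $P_L(m)$ against $\prod_i\mu_\cA(dd_i)$ with $\mu_\cA(dd):=e^{-sd}\mathds{1}_\cA(d)\,\mu(dd)$, I would use the representation $1/S_i=\int_0^\infty e^{-S_iu_i}\,du_i$ together with the triangular substitution $v_j:=u_j+u_{j+1}+\dots+u_m$, under which $\sum_iS_iu_i=\sum_j\l(d_j)v_j$ and the new integration domain is $v_1>v_2>\dots>v_m>0$. Setting $\phi(v):=\int\mu_\cA(dd)\,\l_\ell(d)\,e^{-\l(d)v}$ and applying Fubini, this yields
\begin{equation*}
I_L(m):=\int\prod_{i=1}^{m}\mu_\cA(dd_i)\,P_L(m)=\int_{v_1>\dots>v_m>0}\prod_{j=1}^{m}\phi(v_j)\,dv=\frac{1}{m!}\Big(\int_0^\infty\phi(v)\,dv\Big)^{m}=\frac{A^m}{m!},
\end{equation*}
where the key step is the $m!$-fold symmetry of the integrand on $(0,\infty)^m$ and $A:=\int\mu_\cA(dd)\,\l_\ell(d)/\l(d)$ (obtained by evaluating the inner $v$-integral). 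Symmetrically $I_R(m)=B^m/m!$ with $B:=\int\mu_\cA(dd)\,\l_r(d)/\l(d)$, and clearly $A+B=\int\mu_\cA=H_0(s)$.

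Finally, integrating out $d_j$ against $\mathds{1}_{d_j\notin\cA}\mu(dd_j)e^{-sd_j}$ contributes the factor $\bar H_0(s):=\int\mu(dd)e^{-sd}\mathds{1}_{d\notin\cA}$, and summing over $j$ gives via the binomial theorem
\begin{equation*}
\sum_{j=1}^{k+1}\bbE_{\otimes_{k+1}\mu}\bigl(e^{-sx_{k+1}(0)};E_{k,j}\bigr)=\bar H_0(s)\sum_{\ell=0}^{k}\frac{A^{\ell}}{\ell!}\cdot\frac{B^{k-\ell}}{(k-\ell)!}=\bar H_0(s)\frac{(A+B)^k}{k!}=\bar H_0(s)\frac{H_0(s)^k}{k!},
\end{equation*}
which is the claimed identity. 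The main obstacle I expect is the identity $I_L(m)=A^m/m!$: the cumulative sums $S_i$ in the denominator of $P_L(m)$ are asymmetric and the clean answer only appears after (i) introducing the $1/S_i=\int e^{-S_iu}du$ representation, (ii) performing the triangular change of variables that renders the exponent symmetric in the $v_j$, and (iii) exploiting the symmetry of the integrand on the simplex. The remaining ingredients—the reduction to the factorized form $P_LP_R$ forced by the blocking dynamics, and the binomial summation over $j$—then fall out automatically.
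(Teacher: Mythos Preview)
Your proof is correct and arrives at the same identity, but the route differs from the paper's in an instructive way.

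The paper does not split the trajectory into independent left and right pieces. Instead it records the entire history by a \emph{$j$-admissible string}, i.e.\ a total ordering of all $k$ effective rings together with their directions; the probability of each string is the standard ``competing exponentials'' product $\prod_{i=1}^k \l^*_{L_i}(d_{N_i})\big/\sum_{r\ge i}\l(d_{N_r})$. It then uses the exchangeability of $\otimes_{k+1}\mu$ to relabel the variables, sums over $j$ so that the letter string $(L_1,\dots,L_k)$ ranges freely over $\{\ell,r\}^k$, and finally invokes the combinatorial Lemma~\ref{lem:exchangeable}(b), proved by a short induction, to evaluate $\mathbb E_{\otimes_k\mu}\big[\prod_i f(d_i)\l(d_i)/\sum_{r\ge i}\l(d_r)\big]=\mathbb E_\mu(f)^k/k!$.

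Your approach instead exploits the independence of the left and right timers/coins to factorize the conditional probability as $P_L(j-1)P_R(k+1-j)$ \emph{before} integrating, and then computes $I_L(m)=A^m/m!$ directly via the integral representation $1/S_i=\int_0^\infty e^{-S_i u}\,du$ and the triangular change of variables that symmetrizes the exponent. This is effectively an analytic proof of Lemma~\ref{lem:exchangeable}(b) in the special case of a product measure, bypassing the induction. The final binomial summation over $j$ plays the same role as the paper's free summation over letter strings.

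What each buys: your argument is self-contained and makes the $1/m!$ transparent through the simplex symmetry, but it relies on the product structure of $\otimes_{k+1}\mu$ to factorize the $v$-integrals. The paper's route, via the combinatorial lemma, applies verbatim to any exchangeable law on the domain lengths, which is what allows the extension to exchangeable SPPs in Appendix~\ref{puntini_scambio}.
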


We postpone the proof of the lemmas in order to end the proof of
point~(i) of Theorem \ref{teo1bis}. Due to (\ref{colazione1}),
(\ref{colazione2}), Lemmas \ref{pesante1} and \ref{pesante2}
we obtain
%
%e4.5 ###
%
\begin{eqnarray}\label{cassanata}
G_\infty(s)
& = &
\sum_{k=1}^\infty\frac{ [\int\mu(dx) e^{-sx } \mathbh
{1}_{x \in\cA}]^{k+1} }{ (k+1) \cdot(k-1)!} \nonumber\\
&&{} +
\sum_{k=0}^\infty\int\mu(dx) e^{-s x} \mathbh{1}_{x \notin\cA}
\frac{ [\int\mu(dx) e^{-sx } \mathbh{1}_{x
\in\cA}]^k }{ k!} \nonumber\\
& = &
\sum_{k=1} ^ \infty\frac{ H_0(s) ^{k+1} }{(k+1) \cdot(k-1)!}+ \sum
_{k=0} ^\infty\bigl(
G_0(s)-H_0(s) \bigr) \frac{H_0(s)^k}{ k!}\\
& = &
- H_0(s)- \sum_{j=2}^\infty\biggl[ \frac{1}{(j-1)!}- \frac{1}{j
\cdot(j-2)!}\biggr] H_0(s)^j + G_0(s) e^{H_0(s)}\nonumber\\
&=&
- \sum_{j=1}^\infty\frac{ H_0(s)^j}{j!} + G_0(s) e^{H_0(s)} =
1-e^{H_0(s)}+ G_0(s) e^{H_0(s)}
.\nonumber
\end{eqnarray}
This concludes the proof of (\ref{pietra}) (and hence of
%C ref sbglaita Point (ii) of Theorem \ref{teo1}).
point (i) of Theorem \ref{teo1bis}).

Now we give the proofs of Lemmas \ref{pesante1} and \ref{pesante2}.
\begin{pf*}{Proof of Lemma \ref{pesante1}}
%We first give a simple explanation of the coalescence process with
%conditional law $\bbE_{ \otimes_{i \in\bbZ} \mu} [
%]$. Let $\otimes_{i \in[1,k+1]} \mu$ be the product
%probability measure on $(0,\infty)^{k+1}$ with marginal $\mu$. We
%can identify $\otimes_{i \in[1,k+1]} \mu$ with the law of the SPP
%having $k+1$ points, the most left point $x_0$ located in $0$ and
%such that $d_1, d_2,\ldots, d_k$ are i.i.d. with law $\mu$. Then
From now on we work with the one-epoch coalescence process whose
initial distribution is given by $\otimes_{k+1} \mu$.

Let us suppose that $d_1(0), d_2(0) ,\ldots, d_{k+1}(0) \in\cA$:
we want to understand how the event $F_k$ takes place, that is, how
points $x_1(0),\ldots, x_k(0)$ are erased while $x_0(0) =0$ and
$x_{k+1}(0)$ survive. The event
$F_k$ must be realized as follows:
\begin{longlist}
\item the first erased point must be of the
form $x_{i} (0)$ with $1\leq i \leq k$;
\item after the disappearance of $x_i (0)$, restricting the
observation on
the left of $ x_i(0) $, one sees that $x_{i-1}(0), x_{i-2} (0),\ldots
,x_1(0)$ disappear one after the other, from the rightmost
point to the leftmost point;
\item after the disappearance of $x_i (0)$, restricting the
observation on the right
of $x_i(0)$, one sees that $x_{i+1}(0), x_{i+2} (0),\ldots,x_k(0)$
disappear one after the other, from the leftmost point to the
rightmost point.
\end{longlist}
%
%C aggiunta spiegazione di ii and iii
(ii) and (iii) follow from the blocking phenomenon and the fact that
the disappearance of $x_i(0)$ creates an inactive domain,
$[x_{i-1}(0),x_{i+1}(0)]$. Since the initial configuration has a
finite number of points, the coalescence process can be realized as
follows:
each domain of
%C
initial length $d$ waits independently from the other domains an
exponential time of parameter
${\lambda}(d)$, afterwards
%C
if both the its extremes are still present we say that the ring is
effective and
with probability ${\lambda}_r(d)/{\lambda} (d)$ its left
extreme is erased otherwise the right extreme is erased, and after
this jump the dynamics start afresh. We can therefore describe the
jumps in the coalescence process (disregarding the jump times) by a
string $\s=(\s_1, \s_2,\ldots, \s_m)$, where each entry $\s_i$ is a
couple $\s_i=(N_i,L_i) $ with $N_i \in\{1,2,\ldots, k+1\}$,
%C
$N_i\neq N_j$ for $i\neq j$ and $L_i
\in\{\ell, r\}$ ($N$ stands for ``number'' and $L$ stands for
``letter''). The meaning of $\s_i$ is the following:
%C aggiunto/cambiato per spiegare che i ring su domini che erano gia'
%scomparsi non vanno messi nella sequenza..
the domain which rings at the $i$th effective ring is given by
$[x_{N_i-1}(0), x_{N_i}(0) ]$, while after its
ring the erased extreme is the left one if $L_i=\ell$ or the right
one if $L_i=r$. See Figure \ref{figcascata} for an example. We
say that the number $N_i$ is associated to the letter $L_i$. Given
such a string $\s$ we denote by $\cB(\s)$ the event that the jumps
of the coalescence process are indeed described by the string $\s$
in the sense specified above.

%
%f3 ###
%
\begin{figure}

\includegraphics{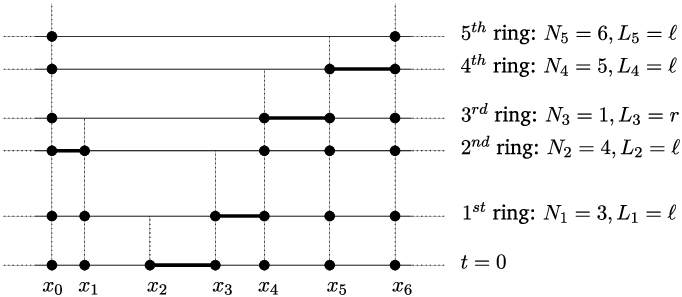}

\caption{Example of a trajectory in $F_k$, with $k=5$.}
\label{figcascata}
\end{figure}

% \begin{figure}[ht]
% % \psfrag{x0}{$x_0$} \psfrag{x1}{$x_1$} \psfrag{x2}{$x_2$}
% % \psfrag{x3}{$x_3$} \psfrag{x4}{$x_4$} \psfrag{x5}{$x_5$}
% % \psfrag{x6}{$x_6$} \psfrag{t0}{$t=0$} \psfrag{1}{$1^{st} \mbox{
% % ring: } N_1=3, L_1=\ell$} \psfrag{2}{$2^{nd} \mbox{ ring: } N_2=4,
% % L_2=\ell$} \psfrag{3}{$3^{rd} \mbox{ ring: } N_3=1, L_3=r$}
% % \psfrag{4}{$4^{th} \mbox{ ring: } N_4=5, L_4=\ell$}
% % \psfrag{5}{$5^{th} \mbox{ ring: } N_5=6, L_5=\ell$}
% \begin{center}
% \hspace{-4,5cm}\includegraphics[width=.60\columnwidth]{cascata}
% \end{center}
% \caption{Example of a trajectory in $F_k$, with $k=5$.}
% \label{figcascata}
% \end{figure}

Due to our previous considerations it holds
\[
F_k = \bigcup_{\s\ \mathrm{admissible}} \cB(\s),
\]
where a string
$\s=(\s_1, \s_2,\ldots, \s_m)$ is called \textit{admissible} if the
following properties are satisfied:
\begin{longlist}[(P2)]
\item[(P1)] if $L_1=\ell$, then $N_1 \in[2,k+1]$;
%C
the numbers $N_i $ associated to the letter
$\ell$ are all the integers in $[N_1,k+1]$, and they appear in the
string in increasing order;
the numbers $N_i $ associated to the letter $r$ are all the positive
integers in $[1,N_1-2]$, and they appear in
the string in decreasing order;
\item[(P2)] if $L_1=r$, then $N_1 \in[1, k]$,
the numbers $N_i $ associated to the letter~$\ell$ are all the integers in $[N_1+2,k+1]$, and they appear in the
string in increasing order; the numbers $N_i $ associated to the
letter $r$ are all the integers in $[1,N_1]$, and they appear in the
string in decreasing order.
\end{longlist}
Observe that an admissible string must have $k$ entries, that is,
$m=k$, and that
the knowledge of $(L_i)_{1\leq i \leq k } $ allows to
determine uniquely the numbers $(N_i)_{1\leq i \leq k}$.%:
%$$
%N_i =
%k+1- \sharp\{ j : i<j \leq k , L_j= \ell\} & \mbox{ if } L_1 =
% \sharp\{ j : i \leq j \leq k , L_j= \ell\} & \mbox{ if } L_1=
%k- \sharp\{ j : i<j \leq k , L_j= \ell\} & \mbox{ if } L_1= r,
%L_i= \ell,\\
%r, L_i = r .
%$$
%parte. le ho scritte solo per convincere che "the knowledge of
%$(L_i)_{1\leq i \leq k } $ allows to determine uniquely the
%numbers $(N_i)_{1\leq i \leq k}$. se pero' la cosa e' chiara da se',
%le formule le possiamo togliere }

%Cfor c'erano vari \lambda^{L} che secondo la nuova notazione ho
%cambiato in $\lambda^*_{L}$
Recall that $ {\lambda}^*_\ell(d)= {\lambda} _r(d)
$, ${\lambda}_r^* (d ) ={\lambda} _\ell(d) $. Writing $d_i(0)$ as
$d_i$ (for
simplicity of notation), if $\s$ is admissible we get
%
%e4.6 ###
%
\begin{equation} \bbE_{\otimes_{k+1} \mu}\bigl[ e^{- s d_1
(\infty) } ; \cB(\s) \bigr]= \bbE_{\otimes_{k+1}
\mu}[ F(d_1, d_2,\ldots, d_{k+1} ,\s)] ,
\end{equation}
where
%
%e4.7 ###
%
\begin{eqnarray}
&&
F(d_1,d_2,\ldots, d_{k+1}, \s)\nonumber\\
&&\qquad=\Biggl(\prod_{i=1}^{k+1} e ^{- s d_i }\mathbh{1}_{ d_i \in\cA}\Biggr)
\frac{{\lambda} (d_{N_1}) }{{\lambda} (d_1)+
\cdots+ {\lambda}(d_{k+1}) }\nonumber\\[-8pt]\\[-8pt]
&&\qquad\quad{}\times\frac{ {\lambda}^*_{L_1}
(d_{N_1})}{{\lambda} (d_{N_1}) }
\prod_{i=2} ^k \frac{ {\lambda}( d_{N_i}) }{ \sum_{j=i}^k {\lambda
}( d_{N_j})
}\frac{ {\lambda} ^*_{L_i}(d_{N_i }) }{{\lambda}( d_{N_i} ) }\nonumber\\
&&\qquad=\Biggl(\prod_{i=1}^{k+1} e ^{- s d_i }\mathbh{1}_{d_i \in\cA}\Biggr)
\frac{{\lambda}^*_{L_1} (d_{N_1}) }{{\lambda} (d_1)+
\cdots+ {\lambda}(d_{k+1}) } \prod_{i=2} ^{k} \frac{ {\lambda
}^*_{L_i}( d_{N_i})
}{ \sum_{j=i}^k {\lambda}( d_{N_j}) }
\nonumber
\end{eqnarray}
(the last factor is defined as $1$ if $k=1$).

Observe that the
law $\otimes_{k+1} \mu$ is exchangeable, that is, it is left
invariant by permutations of $d_1, d_2,\ldots, d_{k+1}$. This
symmetry leads to the identity
\[
\bbE_{\otimes_{k+1} \mu}[ F(d_1, d_2,\ldots, d_{k+1}
,\s)]=\bbE_{\otimes_{k+1} \mu}[ G(d_1, d_2,\ldots, d_{k+1} ,
(L_i)_{1\leq i \leq k} )],
\]
where
\begin{eqnarray*}
&&G(d_1, d_2,\ldots, d_{k+1} , (L_i)_{1\leq i \leq k} )\\
&&\qquad=
\Biggl(\prod_{i=1}^{k+1} e ^{- s d_i }\mathbh{1}_{d_i \in\cA}\Biggr)
\frac{{\lambda}^*_{L_1} (d_1) }{{\lambda} (d_1)+
\cdots+ {\lambda}(d_{k+1}) } \prod_{i=2} ^{k} \frac{ {\lambda
}^*_{L_i}( d_i) }{
\sum_{j=i}^k {\lambda}( d_j) } .
\end{eqnarray*}
Recall that an admissible string $\s$ is uniquely determined by
its letter string $(L_i)_{1\leq i \leq k }$, and observe that
each string in $\{\ell, r \}^{[1, k]}$ is the letter string
$(L_i)_{1\leq i \leq k }$ for some admissible $\s$. Therefore we
have
%
%e4.8 ###
%
\begin{eqnarray}\label{malika}
&&\bbE_{\otimes_{k+1} \mu}\bigl[ e^{- s d_1 (\infty) } ;
F_k \bigr] \nonumber\\
&&\qquad = \sum_{\s\ \mathrm{admissible }} \bbE_{\otimes_{k+1}
\mu}[ F(d_1, d_2,\ldots, d_{k+1}, \s)] \nonumber\\[-8pt]\\[-8pt]
&&\qquad =
\sum_{L_1,\ldots, L_k \in\{\ell, r\} }
\bbE_{\otimes_{k+1} \mu}[ G(d_1, d_2,\ldots, d_{k+1},
(L_i)_{1\leq i \leq k} )] \nonumber\\
&&\qquad = \bbE_{\otimes_{k+1}
\mu}[ H(d_1, d_2,\ldots, d_{k+1} )],\nonumber
\end{eqnarray}
where
\[
H(d_1, d_2,\ldots, d_{k+1}) = \Biggl(\prod_{i=1}^{k+1} e ^{- s d_i
}\mathbh{1}_{ d_i \in\cA} \Biggr)
\frac{{\lambda} (d_1) }{{\lambda} (d_1)+
\cdots+ {\lambda}(d_{k+1}) } \prod_{i=2} ^{k} \frac{ {\lambda}(
d_i) }{ \sum
_{j=i}^k {\lambda}( d_j) } .
%& = e^{- s d_{k+1} } \mathbh{1}_{d_{k+1}\in
% \frac{{\lambda} (d_1) e ^{- s d_1 }\mathbh{1}_{ d_1 \in\cA} }{{
%e ^{- s d_i
%}\mathbh{1}_{d_i \in\cA}}{ \sum_{j=i}^k {\lambda}( d_j) } .
\]
Applying Lemma \ref{lemexchangeable} in Appendix \ref{apppp} with $k+1$
instead of $k$, $m=\mu$, $f(x)=e ^{- s x }\mathbh{1}_{ x \in\cA} $
and $g(x)={\lambda}(x)$, we end up with
%
%e4.9 ###
%
\begin{equation}\label{aiane}
\bbE_{\otimes_{k+1} \mu} [H(d_1,\ldots, d_{k+1})]=
\frac{1}{(k+1)\cdot(k-1)!} \biggl[ \int\mu(dx) e^{-sx
}\mathbh{1}_{ x \in\cA} \biggr]^{k+1}.\hspace*{-35pt}\vspace*{-2pt}
\end{equation}
This ends the proof of Lemma \ref{pesante1}.\vspace*{-3pt}
%
%Observe now that we can write $H(d_1,\ldots, d_{k+1})$ as $H'(d_1,
%where $H'$ is
%symmetric by exchange of $d_k$ and $d_{k+1}$. Since
%$$
%H'(d_1,\ldots,d_k) \frac{{\lambda}(d_k)}{{\lambda}(d_k)+{
%}= H' (d_1,\ldots,
%d_k) ,$$ invoking the exchangeability of $\otimes_{1\leq i \leq
%k+1} \mu$, we get
%$$ \bbE_{\otimes_{1\leq i \leq k+1} \mu} [H(d_1,\ldots,
%d_{k+1})]= \frac{1}{2} \bbE_{\otimes_{1\leq i \leq k+1} \mu}
%[H'(d_1,\ldots, d_{k})] .
%$$
%$H'$ is obtained from $H$ by removing the last factor in the r.h.s.
%of \eqref{sanremo}. The same trick can be repeatedly used getting
%(\prod_{i=1}^{k+1} e ^{- s d_i }\bbI(d_i \in\cA))
% \frac{{\lambda} (d_1) }{{\lambda} (d_1)+
%Similarly, invoking the exchangeability of $\otimes_{i \in[1,
%k+1]} \mu$, we get \begin{eqnarray}\label{vocenasale} \bbE_{\otimes
%_{i \in[1,k+1] } \mu} [ (\prod_{i=1}^{k+1} e ^{- s
%d_i }\bbI(d_i \in\cA))
% \frac{{\lambda} (d_1) }{{\lambda} (d_1)+
%_{\otimes_{i \in[1,k+1] } \mu} [ (\prod_{i=1}^{k+1} e
%^{- s d_i }\bbI(d_i \in\cA))
% \frac{{\lambda} (d_j) }{{\lambda} (d_1)+
%_{i \in
%[1,k+1] } \mu} [ (\prod_{i=1}^{k+1} e ^{- s d_i }\bbI
%(d_i \in\cA)) ]= \frac{1}{k+1}[ \int\mu(dx) e^{-s
%x}\bbI(x \in\cA) ]^{k+1} .
%At this point, the thesis follows immediately from \eqref{malika},
\end{pf*}
\begin{pf*}{Proof of Lemma \ref{pesante2}}
The proof follows the main arguments in the proof of Lemma
\ref{pesante1}, hence we skip some details.
As in the proof of
Lemma~\ref{pesante1} we work with the one-epoch coalescence process
with law $\bbP_{ \otimes_{k+1} \mu}$.

Denoting the jumps of the coalescence process
(disregarding the jump times) with the same rule used in the proof
of Lemma \ref{pesante1}, that is, by means of the string~$\s$, we get
that
%
%e4.10 ###
%
\begin{equation}\label{sonora} E_{k,j}=\bigcup_{\s\
j\mbox{-}\mathrm{admissible} } \cB(\s),\vspace*{-2pt}
\end{equation}
where now \textit{$j$-admissible} means that the numbers $N_i$
associated to the letter~$\ell$ appear in the string $\s$ in
increasing order from $j+1$ to $k+1$, while the numbers $N_i$
associated to the letter $r$ appear in the string $\s$ in decreasing
order from $j-1$ to~$1$. Note that in particular $\s$ contains $j-1$
letters ``$r$'' and ``$k+1-j$'' letters $\ell$, and therefore $\s$ has
length $k$.

As in the previous proof we set $d_r=d_r(0)$. We then compute the
expectation
%
%e4.11 ###
%
\begin{eqnarray}\label{fornaciari}
&&\bbE_{\otimes_{k+1} \mu} \bigl[ e^{-s d_1 (\infty)} ;
\cB(\s) \bigr]\nonumber\\[-3pt]
% \bbE_{\otimes_{i \in[1,k+1]} \mu}[ \prod
%_{i=1}^k \frac{{\lambda} (d_{N_i} ) }{\sum_{r=i}^k {\lambda} (d_{N_r}
%)}\frac{{\lambda}
%^{L_i}(d_{N_i} )}{{\lambda} (d_{N_i})}]=
&&\qquad= \bbE_{\otimes_{k+1} \mu}
\Biggl[ e^{-s d_j }\mathbh{1} _{d_j\notin\cA} \prod_{i=1}^k
\biggl\{ e^{-s d_{N_i}}
\mathbh{1}_{d_{N_i} \in\cA}\frac{{\lambda} ^*_{L_i}(d_{N_i} )
}{\sum_{r=i}^k {\lambda}
(d_{N_r} )}\biggr\}\Biggr] \nonumber\\[-9.5pt]\\[-9.5pt]
&&\qquad=\bbE_{\otimes_{k+1} \mu}
\Biggl[ e^{-s d_{k+1} }\mathbh{1} _{d_{k+1} \notin\cA} \prod
_{i=1}^k \biggl\{ e^{-s d_{i}}
\mathbh{1}_{d_i \in\cA} \frac{{\lambda} ^*_{L_i}(d_i ) }{\sum
_{r=i}^k {\lambda}
(d_r
)}\biggr\}\Biggr]\nonumber\\[-3pt]
&&\qquad= \biggl(\int e^{-s x} \mathbh{1}_{x \notin\cA} \mu(dx)\biggr)
\bbE_{\otimes_{k} \mu}
\Biggl[ \prod_{i=1}^k \biggl\{ e^{-s d_{i}}
\mathbh{1}_{d_i \in\cA}\frac{{\lambda} ^*_{L_i}(d_i ) }{\sum
_{r=i}^k {\lambda}
(d_r )}\biggr\}\Biggr] ,
% [e^{-\sum_{i=1}^{k+1} d_i }\prod_{i=1}^k \frac{{\lambda}
%^{L_i}(d_{i} ) }{\sum_{r=i}^k {\lambda} (d_{r}
%)}]
\nonumber\vspace*{-2pt}
\end{eqnarray}
where in the second identity we have used the exchangeability of
$\otimes_{ k+1} \mu$, and in the third identity we have simply
factorized the probability measure.

Summing over $j$ allows us to remove the constraint that $\s$ must
have
$j-1$ letters ``$r$'' and ``$k+1-j$'' letters $\ell$, hence
%
%e4.12 ###
%
\begin{eqnarray}\qquad
&&\sum_{j=1}^{k+1} \bbE_{\otimes_{k+1} \mu} \bigl[ e^{-s
d_1 (\infty)} ; E_{k,j}\bigr]\nonumber\\[-2pt]
&&\qquad= \sum_{L_1,\ldots, L_k \in\{\ell,
r\} }\mbox{ r.h.s. of  (\ref{fornaciari})}\\[-2pt]
&&\qquad=\biggl(\int e^{-s x}
\mathbh{1}_{x \notin\cA} \mu(dx)\biggr) \bbE_{\otimes_k \mu}
\Biggl[ \prod_{i=1}^k \biggl\{ e^{-s d_{i}}
\mathbh{1}_{d_i \in\cA} \frac{{\lambda} (d_i ) }{\sum_{r=i}^k
{\lambda} (d_r
)}\biggr\}\Biggr] .
\nonumber
\end{eqnarray}
Applying point (b) of Lemma \ref{lemexchangeable} [with $f(x)=
e^{-s x} \mathbh{1}_{x \in\cA}$ and $g(x)={\lambda}(x)$] completes
the proof
of Lemma \ref{pesante2}.
\end{pf*}

%s4.1 ###
\subsection{\texorpdfstring{Proof of Theorem \protect\ref{teo1bis}\textup{(ii)}}{Proof of Theorem 2.14(ii)}}
The proof of point %C (iii--a)
(ii)(a) is trivial, since ${\lambda}_r \equiv0$,
%C asymptotic that
then $x_0(t)=x_0(0)$ for any time $t \geq0$. Indeed the
first point $x_0(t)$ of~$\xi(t)$ cannot be erased from the left due
the infinite domain, and from the right due to the assumption ${\lambda}_r
\equiv0$.

We now concentrate on point
%C (iii--b).
(ii)(b).
Due to (\ref{gormito1}), we can
write (for $s\in\bbR_+ \cup i\bbR$)
%
%e4.13 ###
%
\begin{equation}\label{cena1}
L_\infty(s)= \bbP_{\operatorname{Ren} ({\delta}_0,\nu) } \bigl( 0 \in\xi
(\infty) \bigr) L_0(s) \sum_{k=
0}^\infty
B_k(s) ,
\end{equation}
where
$ B_k(s)
= \bbE_{\otimes_{k}\mu} ( e^{- s x_k (0)
} ; \xi(\infty)= \{ x_k(0) \} )$.
\begin{Lemma}\label{lavatrice}
$B_0(s)= 1$ while, for any $k \geq1$, it holds
%
%e4.14 ###
%
\begin{equation}\label{mostra}
B_k(s)=\mathbb{E}_{\otimes_{k} \mu} \Biggl( \prod_{i=1}^k
e^{-sd_i} \frac{{\lambda}_\ell^*(d_i) \mathbh{1}_{d_i \in
\mathcal{A}}}{\sum_{j=i}^k {\lambda}(d_j)} \Biggr) .
\end{equation}
\end{Lemma}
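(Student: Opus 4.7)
The case $k=0$ is immediate: $\otimes_0 \mu$ is concentrated on the one-point configuration $\{0\}$, so $\xi(\infty) = \{0\} = \{x_0(0)\}$ deterministically and $B_0(s) = e^0 = 1$.

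For $k \geq 1$, the strategy mirrors the one used in the proof of Lemma \ref{pesante1}: identify the unique family of admissible ring sequences realizing $\{\xi(\infty) = \{x_k(0)\}\}$ and then compute its probability in the domain-dynamics representation. Starting from $k+1$ points $0 = x_0 < x_1 < \cdots < x_k$ sampled from $\otimes_k \mu$, I would first observe that all initial lengths $d_1,\dots,d_k$ must belong to $\cA$: if some $d_j \notin \cA$, the inactive domain $[x_{j-1},x_j]$ together with the infinite left inactive domain $(-\infty,0]$ sandwich the points $x_0,\dots,x_{j-1}$, and the blocking phenomenon recalled at the beginning of Section \ref{brunatecchio} forces at least one of them to survive, contradicting $\xi(\infty)=\{x_k(0)\}$.

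Under the assumption $d_1,\dots,d_k \in \cA$, an analogous blocking argument pins down the entire ring sequence. The first effective ring cannot erase $x_j$ for any $j \geq 1$: such an erasure would create a new inactive domain $[x_{j-1},x_{j+1}]$ of length $d_j + d_{j+1} \notin \cA$ (by assumption (A2')), which together with $(-\infty,0]$ would again sandwich $x_0,\dots,x_{j-1}$ and forbid their complete erasure. Hence the first effective ring must be that of the domain $[x_0,x_1]$ with coin value $L_1 = \ell$, so that $x_0$ is erased. Iterating the same argument on the residual configuration $\{x_1,\dots,x_k\}$ with intervals $d_2,\dots,d_k$, one concludes that the $i$-th effective ring must be of the domain $[x_{i-1},x_i]$, firing with $L_i = \ell$, for $i = 1,2,\dots,k$. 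Encoding the sequence as in the proof of Lemma \ref{pesante1}, there is a single admissible string, namely $\s = ((1,\ell),(2,\ell),\dots,(k,\ell))$.

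Finally, using the representation of the dynamics introduced there -- each initial domain $[x_{i-1},x_i]$ carries an independent exponential clock of rate $\l(d_i)$ and, conditionally on firing, erases its left extreme with probability $\l^*_\ell(d_i)/\l(d_i)$ -- the conditional probability (given $d_1,\dots,d_k$) that the $k$ clocks fire in the order $T_1 < T_2 < \cdots < T_k$ and every coin shows $\ell$ equals, by memorylessness of the exponential distribution,
\begin{equation*}
\prod_{i=1}^{k} \frac{\l(d_i)}{\sum_{j=i}^{k} \l(d_j)} \cdot \frac{\l^*_\ell(d_i)}{\l(d_i)} \;=\; \prod_{i=1}^{k} \frac{\l^*_\ell(d_i)\,\mathds{1}_{d_i \in \cA}}{\sum_{j=i}^{k}\l(d_j)},
\end{equation*}
where the indicator makes the constraint $d_i \in \cA$ explicit (and is redundant because $\l^*_\ell(d_i) = \l_r(d_i) = 0$ as soon as $\l(d_i)=0$). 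Multiplying by the weight $e^{-s x_k(0)} = \prod_{i=1}^{k} e^{-s d_i}$ and integrating against $\otimes_k \mu$ yields \eqref{mostra}. The main obstacle is the cascade uniqueness carried out in the third paragraph: it is not a calculation but a careful iterative application of the blocking effect, the only real difference with respect to Lemma \ref{pesante1} being that we do not have to protect the leftmost point from erasure but rather to force the cascade to be initiated precisely there.
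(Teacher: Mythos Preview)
Your proof is correct and follows essentially the same approach as the paper's own proof: both identify via the blocking phenomenon that the unique way to realize $\{\xi(\infty)=\{x_k(0)\}\}$ is the left-to-right cascade erasing $x_0,x_1,\dots,x_{k-1}$ in order (forcing all $d_i\in\cA$), and both compute the resulting conditional probability as $\prod_{i=1}^k \l_\ell^*(d_i)/\sum_{j=i}^k \l(d_j)$. Your write-up is simply more explicit about the inductive blocking argument and the translation into the ordered-clock event $T_1<\cdots<T_k$ with all coins $\ell$, which the paper compresses into a single sentence.
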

\begin{pf}
We work with the one-epoch coalescence process with law
$\bbP_{\otimes_{k}\mu}$. The case $k=0$ is trivial. We
take $k \geq1$. Due to the blocking phenomenon,
there is only one possible way to realize the event $\{\xi(\infty)= x_k
(0)\}$:
only the points $x_0(0),x_1(0),\ldots,x_{k-1}(0)$ must disappear, one
after the other from the left to the right. Setting $d_i= d_i(0)$,
this implies that $d_1,\ldots, d_k$ belong to $\cA$. In this case,
knowing $\xi(0)$, the above event has probability
\[
\frac{{\lambda}_\ell^*(d_1)}{\sum_{j=1}^k {\lambda}(d_j)} \times
\frac{{\lambda}_\ell^*(d_2)}{\sum_{j=2}^k {\lambda}(d_j)} \times
\cdots\times
\frac{{\lambda}_\ell^*(d_k)}{ {\lambda}(d_k)} = \prod_{i=1}^k
\frac{{\lambda}_\ell^*(d_i)}{\sum_{j=i}^k {\lambda}(d_j)} .
\]
Since $x_k(0)=d_1+d_2+\cdots+d_k$, we get (\ref{mostra}).
\end{pf}

Since $\lambda_\ell= \gamma
\lambda_r$ we have ${\lambda}_r^*=\gamma{\lambda}_\ell^*$. In particular
${\lambda}={\lambda}_\ell^*+{\lambda}_r^*=(1+\gamma){\lambda
}_\ell^*$. Hence, due to (\ref{cena1}) and
Lemma \ref{lavatrice}, we get
\[
L_\infty(s) = C L_0(s) \sum_{k=0}^\infty
\frac{1}{(1+\gamma)^k}\bbE_{\otimes_{k} \mu} \Biggl(
\prod_{i=1}^k e^{-sd_i} \frac{\lambda(d_i)\mathbh{1}_{d_i \in
\mathcal{A}}}{\sum_{j=i}^k {\lambda}(d_j)} \Biggr) ,
\]
where $C:= \bbP_{\operatorname{Ren} ({\delta}_0,\nu) } ( 0 \in\xi(\infty
) )$ and
where, in the last series, the addendum with $k=0$ is defined as
$1$. Applying point (b) of Lemma \ref{lemexchangeable} [with
$f(x)= e^{-s x} \mathbh{1}_{x \in\mathcal{A}}$ and $g(x)={\lambda}(x)$],
and recalling that $H_0(s)= \int e^{-sx} \mathbh{1}_{x \in
\mathcal{A}} \mu(dx)$, we end up with
\[
L_\infty(s)
=
C L_0(s) \sum_{k=0}^\infty\frac{H_0(s)^k}{(1+\gamma)^k \cdot k!} =
C L_0(s) \exp\biggl\{ \frac{H_0(s)}{1+\gamma} \biggr\} .
\]
Since $L_0(0)=L_\infty(0)=1$, the latter identity applied to $s=0$
leads to $C=\exp\{- \frac{H_0(0)}{1+\gamma} \}$
which in turn leads to (\ref{pietrabis}).
%C
Then (\ref{troppacocacola}) follows immediately by noticing that
$\bbP_{\operatorname{Ren} (\nu,\mu)}(x_0(0)\in\xi(\infty))=\bbP
_{\operatorname{Ren}(\delta_0,\mu)}(0\in\xi(\infty))$ and from the
above definition
of $C$.

%%%%%%%%%%%%%%%%%%%%%%%%%%%%%%%%%%%%%%%%%%%%%%%%%%%%%%%%%%%%%%%%%%%%%%%%%%%%%%%%%%%%%%%%%%%%%%%%%%%%%%%%%%
%%%%%%%%%%%%%%%%%%%%%%%%%%%%%%%%%%%%%%%%%%%%%%%%%%%%%%%%%%%%%%%%%%%%%%%%%%%%%%%%%%%%%%%%%%%%%%%%%%%%%%%%%%

%s5 ###
\section{\texorpdfstring{Analysis of the recursive identity (\protect\ref{pietra}) in OCP}{Analysis of the recursive identity (2.8) in OCP}} \label{gattomagico}

As mentioned in the \hyperref[intro]{Introduction}, a crucial tool to prove Theorem
\ref{teo2} is given by a special integral representation of certain
Laplace transforms, which makes identities~(\ref{pietra}) and
(\ref{pietrabis}) finally treatable. We first consider
(\ref{pietra}), focusing our attention on the one-epoch
coalescence process in the same setting of Section \ref{annarella}
(i.e., the active domains have length in $[d_{\min},d_{\max})$).
In what
follows, we present an overview of the global scheme, postponing
proofs to the end of the section. It is convenient to work with
rescaled random variables. More precisely, in the same setting of
Theorem \ref{teo1}, we call $X_0, X_\infty$ some generic random
variables with law $\mu, \mu_\infty$, respectively. Then we define
\[
Z_0=X_0/d_{\min} \quad\mbox{and}\quad Z_\infty=X_\infty/d_{\max}
\]
as the rescaled random
variables. Setting for $s>0$
%
%e5.1 ###
%
\begin{eqnarray}\label{ridono}
g_0(s) &=& \bbE( e^{-s Z_0}) ,\qquad g_\infty(s)=\bbE
(e^{- s Z_\infty} ) ,\nonumber\\[-8pt]\\[-8pt]
h_0 (s) &=& \bbE(
e^{-s Z_0 }; Z_0< a ) ,\qquad a = \frac{d_{\max}}{d_{\min}} ,\nonumber
\end{eqnarray}
equation (\ref{pietra}) becomes equivalent to
%
%e5.2 ###
%
\begin{equation}\label{pietrabisbis}
1- g_\infty(a s)=\bigl( 1- g_0(s) \bigr) e^{h_0(s)} .
\end{equation}
By definition, and because of assumption (A2), we have $Z_0\ge1$, $
Z_\infty\ge1$
and $a \in[1,2]$.
These bounds will turn out to be crucial later on.

For later use, we point out some simple identities. We recall the
definition of the \textit{exponential integral} function
$\operatorname{Ei}(s)$, $s>0$,
\[
\operatorname{Ei}(s)= \int_s ^\infty\frac{e^{-t} }{t} \,dt = \int_1
^\infty
\frac{
e^{-sx}}{x} \,dx .
\]
Given a Radon measure $t$ on $[0,\infty)$ (i.e., a Borel
nonnegative measure, giving finite mass to any bounded Borel set),
by Fubini's theorem it is simple to check that
%
%e5.3 ###
%
\begin{equation}\label{pizzette}
\int_0 ^\infty\frac{e^{-s (1+x) }}{1+x} t (dx)= \int_s ^\infty du\,
e^{-u} \int_0 ^\infty e^{-u x} t (dx) .
\end{equation}
Above and in what follows, we will write $\int_c^\infty$ instead of
$\int_{[c,\infty)}$ for $c \geq0$.
If $t(dx)= c_0 \,dx$, the quantity
in (\ref{pizzette}) is simply the exponential integral $
\operatorname{Ei}(s)$ and the right-hand side of (\ref{pizzette}) gives an
alternative integral representation of~$\operatorname{Ei}(s)$. In
particular, the limit points in Theorem \ref{teo2} have Laplace
transform of the form
%
%e5.4 ###
%
\begin{eqnarray}\label{polenta}
g_\infty^{(c_0)} (s)&=& 1- \exp
\biggl\{ -\int_0 ^\infty\frac{e^{-s (1+x) }}{1+x} t (dx) \biggr\}\nonumber\\[-8pt]\\[-8pt]
&=& 1-
\exp\biggl\{- \int_s ^\infty du\, e^{-u} \int_0 ^\infty e^{-u x} t
(dx) \biggr\},\nonumber
\end{eqnarray}
where $ t (dx) = c_0 \,dx$.
% and $w$ denotes the Laplace transform of the measure $t$.

This observation suggests to write the Laplace
transforms $g_0$, $g_\infty$ in the form
(\ref{polenta}) for suitable Radon measures $t_0$ and $t_\infty$.
%(in what follows, by ``measure" we mean ``Radon measure").
The
following result guarantees that such an integral representation
exists.
\begin{Lemma}\label{gola}
Let $Z$ be a random variable such that $Z \geq1$, and define
$g(s)=\bbE[ e^{-sZ} ]$, $s \geq0$. Let $w\dvtx(0, \infty)
\rightarrow\bbR$ be the unique function such that
%
%e5.5 ###
%
\begin{equation}\label{pompiere0}
g(s)=1-\exp\biggl\{ - \int_s ^\infty du\, e^{-u} w(u)\biggr\},\qquad
s>0 ,
\end{equation}
that is,
%
%e5.6 ###
%
\begin{equation}\label{pompiere1}
w (s)=
- \frac{e^s g'(s)}{1-g (s)} ,\qquad s >0 .
\end{equation}
Then the function $w$
is completely monotone. In particular, there exists a~unique Radon
measure $t(dx)$ on $[0, \infty)$ (not
necessarily of finite total mass) such that
%
%e5.7 ###
%
\begin{equation}\label{cotoletta}
w(s)= \int_0 ^\infty e^{-sx} t(dx) ,\qquad s>0 ,
\end{equation}
and therefore
%
%e5.8 ###
%
\begin{equation}\label{animainpena}
g(s)= 1- \exp\biggl\{ -\int_0^\infty\frac{e^{-s(1+x) }}{1+x} t
(dx)\biggr\} ,\qquad s\geq0 .
\end{equation}
Moreover,
%
%e5.9 ###
%
\begin{equation}\label{fuoco}
\limsup_{s \downarrow0}-\frac{s g'(s)}{1-g (s)}\in[0 ,1] .
\end{equation}
\end{Lemma}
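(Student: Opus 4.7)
The plan is to establish the complete monotonicity of $w$ by a product decomposition, then invoke Bernstein's theorem to obtain the measure $t$, and finally use a convexity argument for the bound \eqref{fuoco}.

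First I would set up the basic facts: since $Z\geq 1$, one has $g(s)\leq e^{-s}<1$ for $s>0$, so $1-g(s)>0$ on $(0,\infty)$, and $w$ defined by \eqref{pompiere1} is well-defined and smooth on $(0,\infty)$. The equivalence between \eqref{pompiere0} and \eqref{pompiere1} is a direct computation: differentiating $\log(1-g(s)) = -\int_s^\infty e^{-u}w(u)\,du$ yields $-g'(s)/(1-g(s)) = e^{-s}w(s)$.

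The heart of the proof is to show that $w$ is completely monotone by writing it as a product of two completely monotone functions:
\begin{equation*}
w(s) \;=\; \bigl(-e^s g'(s)\bigr)\cdot\frac{1}{1-g(s)}.
\end{equation*}
For the first factor, I would use $-e^s g'(s) = \bbE[Z e^{-s(Z-1)}] = \int_0^\infty (y+1)e^{-sy}\,\mu_1(dy)$, where $\mu_1$ is the law of $Z-1$ on $[0,\infty)$; this is the Laplace transform of a positive Radon measure, hence completely monotone. For the second factor, I would expand $1/(1-g(s)) = \sum_{k\geq 0} g(s)^k$ as a renewal series: each $g(s)^k = \int e^{-sx}\,\mu^{*k}(dx)$ is completely monotone, and crucially, because $Z\geq 1$ the convolution $\mu^{*k}$ is supported on $[k,\infty)$, so the renewal measure $\sigma := \sum_{k\geq 0}\mu^{*k}$ is locally finite and $1/(1-g(s)) = \int e^{-sx}\sigma(dx)$ is a genuine Laplace transform on $(0,\infty)$. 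I expect this step to be the main obstacle, since naively $g(0)=1$ would prevent inverting $1-g$; the support condition $Z\geq 1$ is what saves us. The product of two completely monotone functions is completely monotone, so $w$ is CM.

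By Bernstein's theorem there exists a unique Radon measure $t$ on $[0,\infty)$ such that $w(s) = \int_0^\infty e^{-sx}\,t(dx)$ for $s>0$. Plugging this into \eqref{pompiere0} and applying the identity \eqref{pizzette} (with roles reversed) yields $\int_s^\infty e^{-u}w(u)\,du = \int_0^\infty \frac{e^{-s(1+x)}}{1+x}\,t(dx)$, which gives \eqref{animainpena}.

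Finally, for \eqref{fuoco}, nonnegativity is immediate since $-g'(s)\geq 0$ and $1-g(s)>0$ for $s>0$. For the upper bound, I would use that $g$, being a Laplace transform of a positive measure, is convex on $[0,\infty)$; the tangent line inequality $g(0)\geq g(s)+g'(s)(0-s)$ rearranges to $1-g(s)\geq -sg'(s)$, so the ratio $-sg'(s)/(1-g(s))$ is bounded above by $1$ for every $s>0$. Taking the limsup as $s\downarrow 0$ gives \eqref{fuoco}.
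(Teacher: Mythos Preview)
Your proof is correct and follows essentially the same approach as the paper: both factor $w=(-e^{s}g')\cdot(1-g)^{-1}$, show each factor is completely monotone (the paper handles the first factor by a direct Leibniz-rule computation of $D^{n}(-e^{s}g')=e^{s}\bbE[e^{-sZ}Z(1-Z)^{n}]$ rather than your Laplace-transform rewriting, and uses the same geometric series for the second), invoke Bernstein's theorem, and then bound the ratio in \eqref{fuoco} (the paper uses the pointwise inequality $ye^{-y}\le 1-e^{-y}$, which is equivalent to your convexity/tangent-line argument). Your explicit check that the renewal measure $\sigma=\sum_{k\ge 0}\mu^{*k}$ is locally finite is a helpful detail that the paper leaves implicit.
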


We recall that a function $f \dvtx(0,\infty)\rightarrow\bbR$ is
called \textit{completely monotone} if it possesses derivatives $D^n f
$ of all orders and
\[
(-1)^n D^n f (x) \geq0 \qquad \forall x >0 .
\]
Due to the above lemma, there exist two uniquely determined
Radon measures $t_0$ and $t_\infty$ on $[0,\infty)$, such that
$g_0$ and $g_\infty$ admit the integral representation
(\ref{animainpena}) with $t$ replaced by $t_0$ and $t_\infty$,
respectively.

In order to rewrite (\ref{pietrabisbis}) as identity in
terms of $t_0$ and $t_\infty$, we need to express the function
$h_0$ in terms of $t_0$. The following result gives us the
solution:
\begin{Lemma}\label{limonata}
Let $Z$ be a random variable such that $Z \geq1$, and let $g(s)$ be
its Laplace transform. Let $t$ be the unique Radon measure on
$[0,\infty)$ satisfying~(\ref{animainpena}) and call $m(dx)$ the
Radon measure with support in $[1,\infty)$ such that\looseness=-1
%
%e5.10 ###
%
\begin{equation}\label{bilancia}
m (A)= \int_0 ^\infty\frac{\mathbh{1}_{1+x \in A} }{1+x} t(dx) .
\end{equation}\looseness=0
For each $k \geq1$, consider the convolution measure $ m ^{(k)}$
with support in $[k,\infty)$ defined as
%
%e5.11 ###
%
\begin{equation}
m^{(k)} (A)= \int_1^\infty
m(dx_1)\int_1^\infty m(dx_2)\cdots\int_1^\infty m(dx_k)
\mathbh{1}_{x_1 +x_2+ \cdots+ x_k\in A} .\hspace*{-35pt}
\end{equation}
Then the law of $Z$ is given by
%
%e5.12 ###
%
\begin{equation}\label{barrocciaio}
\sum_{k=1}^\infty\frac{(-1)^{k+1} }{k!} m^{(k)} .
\end{equation}
In particular
%
%e5.13 ###
%
\begin{equation} \label{brucogiallo} \bbE[ e^{-sZ}; Z< a
]= \int_{[0,a-1) } \frac{e^{-s(1+x)}}{1+x} t(dx)
,\qquad s \geq0 .
\end{equation}
\end{Lemma}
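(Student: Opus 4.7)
The plan is to recognise the inner integral of the representation \eqref{animainpena} as itself the Laplace transform of $m$, then Taylor-expand the exponential $e^{-F(s)}$ into a power series whose $k$-th term is a Laplace transform of the $k$-fold convolution $m^{(k)}$, and finally to invert term by term; the support property $\supp m^{(k)}\subset[k,\infty)$ makes the resulting series into a locally finite sum on each bounded interval.

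First I would observe that since $Z\ge 1$, for every $s>0$ one has $0<g(s)\le e^{-s}<1$, so writing
\begin{equation*}
F(s):=\int_0^\infty \frac{e^{-s(1+x)}}{1+x}\,t(dx),
\end{equation*}
the representation \eqref{animainpena} yields $F(s)=-\log\bigl(1-g(s)\bigr)\in(0,\infty)$, in particular $F$ is finite on $(0,\infty)$. A change of variables $y=1+x$ together with the definition \eqref{bilancia} of $m$ gives
\begin{equation*}
F(s)=\int_1^\infty e^{-sy}\,m(dy),
\end{equation*}
which says that $F$ is exactly the Laplace transform of the Radon measure $m$, and in particular that $m$ is locally finite on $[1,\infty)$.

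Next I would Taylor-expand
\begin{equation*}
g(s)=1-e^{-F(s)}=\sum_{k=1}^\infty\frac{(-1)^{k+1}}{k!}\,F(s)^k.
\end{equation*}
By Fubini's theorem, $F(s)^k=\int_k^\infty e^{-sy}\,m^{(k)}(dy)$ is the Laplace transform of the $k$-fold convolution $m^{(k)}$ which, by definition, is supported in $[k,\infty)$. Because of this support property, for any bounded Borel set $A\subset[1,\infty)$ the series $\sum_{k\ge 1}\frac{(-1)^{k+1}}{k!}m^{(k)}(A)$ involves only finitely many nonzero terms (the terms with $k>\sup A$ vanish), so it defines a locally finite signed measure $\nu$ on $[1,\infty)$. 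The absolute majorant $\sum_{k\ge 1}F(s)^k/k!=e^{F(s)}-1<\infty$ justifies interchanging summation and integration against $e^{-sy}$, giving $g(s)=\int e^{-sy}\,\nu(dy)$ for all $s>0$. By uniqueness of the Laplace transform for locally finite signed measures on $[1,\infty)$, the law of $Z$ must coincide with $\nu$, which is exactly \eqref{barrocciaio}.

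Finally, for \eqref{brucogiallo} I would use the crucial fact that $a=d_{\rm max}/d_{\rm min}\in[1,2]$ by (A2), so $[1,a)\subset[1,2)$; since $m^{(k)}$ is supported in $[k,\infty)\subset[2,\infty)$ for every $k\ge 2$, all those terms vanish on $[1,a)$, and only the $k=1$ contribution $m^{(1)}=m$ survives. Therefore
\begin{equation*}
\bbE\bigl[e^{-sZ};\,Z<a\bigr]=\int_{[1,a)}e^{-sy}\,m(dy)=\int_{[0,a-1)}\frac{e^{-s(1+x)}}{1+x}\,t(dx).
\end{equation*}
The only delicate point in the whole argument is to make sense of $\nu=\sum_k\frac{(-1)^{k+1}}{k!}m^{(k)}$ as a genuine (signed) measure rather than just a formal object, and to legitimate the term-by-term Laplace inversion; both issues are settled by the support property $\supp m^{(k)}\subset[k,\infty)$, which reduces the problem on each bounded interval to a finite sum.
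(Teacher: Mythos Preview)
Your proposal is correct and follows essentially the same approach as the paper: recognise $F(s)$ as the Laplace transform of $m$, Taylor-expand $1-e^{-F(s)}$, identify $F(s)^k$ as the Laplace transform of $m^{(k)}$, and use the support property $\supp m^{(k)}\subset[k,\infty)$ both to make the series locally finite and to isolate the $k=1$ term on $[1,a)$. The only cosmetic difference is that the paper, instead of invoking uniqueness of the Laplace transform for signed measures directly, splits the alternating series into its odd and even parts $\nu_+=\sum_{k\text{ odd}}\frac{1}{k!}m^{(k)}$ and $\nu_-=\sum_{k\text{ even}}\frac{1}{k!}m^{(k)}$, so as to compare the Laplace transforms of the two \emph{positive} Radon measures $p_Z+\nu_-$ and $\nu_+$ and cite Feller's inversion theorem for nonnegative measures; your formulation packages the same step more compactly.
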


We point out that, given a bounded Borel set $A$, the series
\[
m_*(A)=\sum_{k=1}^\infty\frac{(-1)^{k+1} }{k!} m^{(k)}(A)
\]
is a finite sum, since $m^{(k)}$ has support in $[k,\infty)$. The
thesis includes that this sum is a nonnegative number and that the
set-function $A\mapsto m_*(A)$, defined on bounded Borel sets, extend
uniquely to a Radon measure on all Borel sets.

Equation (\ref{brucogiallo}) above allows us to write $h_0(s)$ in terms
of $t_0$. Collecting the above observations we get for $s \geq0$
\begin{eqnarray*}
g_0(s)&=& 1- \exp\biggl\{ -\int_0^\infty\frac{e^{-s(1+x) }}{1+x} t_0
(dx)\biggr\} ,\\
g_\infty(s)&=& 1- \exp\biggl\{ -\int_0^\infty\frac{e^{-s(1+x)
}}{1+x} t_\infty
(dx)\biggr\} ,\\
h_0(s)&=&\int_{[0,a-1) } \frac{e^{-s(1+x)}}{1+x} t_0(dx) .
\end{eqnarray*}
Due to the above identities,
(\ref{pietrabisbis}) is equivalent to
%
%e5.14 ###
%
\begin{equation}\label{florida}\qquad
\int
_0^\infty\frac{e^{-a s(1+x) }}{1+x}t_{\infty}(dx) =
\int_{[a-1,\infty) } \frac{e^{- s(1+x) }}{1+x}t_0(dx) ,\qquad
s\geq0.
\end{equation}

It is convenient now to introduce the following notation.
Given
an increasing function $\phi\dvtx[0,\infty) \rightarrow[0,\infty)$ and
a Radon measure $\mathfrak{m}$ on $[0,\infty)$, we denote by
$\mathfrak{m}\circ\phi$ the new Radon measure on $[0,\infty)$
defined by
%
%e5.15 ###
%
\begin{equation}\label{gianni}
\mathfrak{m} \circ\phi(A)= \mathfrak{m}( \phi(A)) ,\qquad
A\subset\bbR\mbox{ Borel} .
\end{equation}
Note that $\mathfrak{m}\circ\phi$ is indeed a measure, due to the
injectivity of $\phi$. Moreover, it holds
%
%e5.16 ###
%
\begin{equation}\label{regoletta}
\int_0^\infty f(x) \mathfrak{m} \circ\phi(dx)= \int
_{[\phi(0),\infty)} f( \phi^{-1} (x) ) \mathfrak{m}(dx) .
\end{equation}
%
%Finally, we write $\phi_n$ for the composition of $n$ times the
%function $\phi$ ({\it i.e.} $\phi_0= \bbI$, $\phi_1= \phi$, $\phi_2 =

We are finally able to give a simple characterization of
(\ref{florida}), which we know to be equivalent to
(\ref{pietrabisbis}):
\begin{Theorem}\label{cannone}
Consider the linear function $\phi\dvtx[0,\infty) \rightarrow
[0,\infty)$ defined as $\phi(x)= a(1+x)-1$. Then, equation
(\ref{florida}) [and therefore also (\ref{pietrabisbis})] is
equivalent to the relation
%
%e5.17 ###
%
\begin{equation}\label{bianchini}
t_\infty= (1/a) t_0 \circ\phi.
\end{equation}
\end{Theorem}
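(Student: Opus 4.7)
The plan is to prove the equivalence by establishing each direction separately. The implication \eqref{bianchini} $\Rightarrow$ \eqref{florida} reduces to a direct change of variables via \eqref{regoletta}, while the converse uses uniqueness of the Laplace transform of Radon measures after rewriting both sides of \eqref{florida} as Laplace integrals on $[a,\infty)$. The only subtle point is that $t_0,t_\infty$ need not have finite total mass, so I must stay in the Radon setting.

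For the forward direction, $\phi$ is a bijection from $[0,\infty)$ onto $[a-1,\infty)$ with $\phi^{-1}(y) = (y+1)/a - 1$, hence $1+\phi^{-1}(y) = (y+1)/a$ and $as(1+\phi^{-1}(y)) = s(y+1)$. Assuming $t_\infty = (1/a)\, t_0 \circ \phi$ and using \eqref{regoletta} with $f(x) = e^{-as(1+x)}/(1+x)$ I would obtain
\begin{equation*}
\int_0^\infty \frac{e^{-as(1+x)}}{1+x}\, t_\infty(dx) = \frac{1}{a}\int_{[a-1,\infty)} \frac{a\, e^{-s(y+1)}}{y+1}\, t_0(dy),
\end{equation*}
and the factor $a$ cancels with the prefactor $1/a$, producing exactly the RHS of \eqref{florida}.

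For the converse, I would introduce two Radon measures on $[a,\infty)$: define $\mu_\infty$ as the image of the Radon measure $(1+x)^{-1}\, t_\infty(dx)$ under $x \mapsto a(1+x)$, and $\mu_0$ as the image of $\mathds{1}_{[a-1,\infty)}(x)(1+x)^{-1}\, t_0(dx)$ under $x \mapsto 1+x$. A direct substitution shows that \eqref{florida} becomes $\int_a^\infty e^{-su}\, \mu_\infty(du) = \int_a^\infty e^{-su}\, \mu_0(du)$ for every $s \geq 0$, and by the representation of Lemma \ref{gola} the common value equals $-\log(1 - g_\infty(as))$, which is finite for all $s > 0$. The classical uniqueness theorem for the Laplace transform on Radon measures then gives $\mu_\infty = \mu_0$, and undoing the two pushforwards (both linear bijections) identifies $t_\infty$ with $(1/a)\, t_0 \circ \phi$ as Radon measures on $[0,\infty)$, establishing \eqref{bianchini}. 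The main technical point to check is this last step, i.e.\ that the equality of pushforwards under different linear maps can be pulled back to an identity between $t_\infty$ and $(1/a)\, t_0 \circ \phi$ on arbitrary Borel sets of $[0,\infty)$; this is a routine verification once one notes that both pushforward maps are bijective on the relevant supports.
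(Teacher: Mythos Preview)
Your proof is correct and follows essentially the same strategy as the paper: a change of variables via $\phi$ for one direction, and uniqueness of the Laplace transform for Radon measures for the converse. The only organizational difference is that the paper introduces $\rho := (1/a)\, t_0\circ\phi$ at the outset, shows by your same change of variables that the right-hand side of \eqref{florida} equals $\int_0^\infty e^{-as(1+x)}(1+x)^{-1}\,\rho(dx)$, and then compares $t_\infty$ and $\rho$ through the \emph{same} pushforward $x\mapsto 1+x$ (producing measures $m_\infty$ and $m'$ on $[1,\infty)$ with identical Laplace transforms). This makes the final step immediate: $m_\infty=m'$ directly yields $t_\infty=\rho$ since $t\mapsto (1+x)^{-1}t(dx)$ composed with a shift is clearly injective. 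Your route through two different linear maps $x\mapsto a(1+x)$ and $x\mapsto 1+x$ is equivalent but forces you to check that the composite $\psi_2^{-1}\circ\psi_1=\phi$ and then divide out the weight $(1+x)^{-1}$; this is exactly the ``routine verification'' you flag, and it does go through.
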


%s5.1 ###
\subsection{\texorpdfstring{Proof of Lemma \protect\ref{gola}}{Proof of Lemma 5.1}}
First we prove that $w$ is a
completely monotone function. Since $g(s)<1$ for $s>0$, we can
write $w= f \sum_{k=0}^\infty g^k$ where $f(s)= -e^s g '(s) $.
Trivially, $g$ is a completely monotone function. Since the product
of completely monotone functions is again a completely monotone
function (see Criterion 1 in Section XIII.4 of~\cite{Fe2}), we
conclude that $g^k$ is a completely monotone function. Since the sum
of completely monotone functions is trivially completely monotone,
we conclude that $\sum_{k=0}^\infty g^k$ is completely monotone. It
remains to prove that $f$ is completely monotone. To this aim we
observe that, by the Leibniz rule,
\begin{eqnarray*}
D^n f (s)
& = &
- \sum_{k=0}^n \pmatrix{n\cr k} D^{n-k} (e^s) D^k(g'(s))
=
- e^s \sum_{k=0}^n \pmatrix{n\cr k} D^{k+1} g (s) \\
& = &-e^s\sum_{k=0}^n \pmatrix{n\cr k} (-1)^{k+1} \bbE( e^{-s Z}
Z^{k+1} )
=
e^s \bbE\Biggl(e^{-s Z} Z \sum_{k=0}^n \pmatrix{n\cr k}(-Z)^k\Biggr) \\
& = &e^s \bbE\bigl( e^{-s Z} Z(1-Z)^n\bigr).
\end{eqnarray*}
Since $1-Z \leq0$, the sign of the $n$th derivative $D^n f $ is\vadjust{\goodbreak}
$(-1)^n$.

At this point, we can apply Theorem 1a in Section XIII.4 of
\cite{Fe2} to get that there exists a Radon measure $t(dx)$ on
$[0, \infty)$ (not necessarily of finite total mass) satisfying
(\ref{cotoletta}). Moreover, the above measure $t$ is uniquely
determined due to the \textit{inversion formula} given in Theorem 2, Section
XIII.4 of~\cite{Fe2}. Finally, we derive (\ref{animainpena}) for $s
>0$ from
(\ref{pizzette}), (\ref{pompiere0}) and (\ref{cotoletta}). The
extension to $s=0$ follows from the monotone convergence theorem.

In order to prove (\ref{fuoco}) we observe that $y e^{-y} \leq1-
e^{-y}$ for all $y \geq0$, thus implying that
\[
- s g'(s)= \bbE( sZ e^{-sZ}) \leq1-\bbE( e^{-sZ} ) =1- g(s) \qquad
\forall s>0 .
\]
In particular, the ratio in (\ref{fuoco}) is bounded by $1$. On
the other hand $-s g'(s)= \bbE( sZ e^{-sZ})>0$ while $1- g(s)
>0$, thus implying that the ratio in (\ref{fuoco}) is positive.

%s5.2 ###
\subsection{\texorpdfstring{Proof of Lemma \protect\ref{limonata}}{Proof of Lemma 5.2}}
Due to the definition of $m(dx)$, we can write
%
%e5.18 ###
%
\begin{equation}\label{succhetto}
\int_0^\infty\frac{e^{-s(1+x) }}{1+x} t (dx) = \int_0^\infty
e^{-sx} m(dx) .
\end{equation}
By (\ref{animainpena}), since $g(s)<1$ for $s>0$, we get that the
above quantities are finite as $s>0$. Using the series expansion
of the exponential function we can write
%
%e5.19 ###
%
\begin{equation}\label{preparocolazione}\qquad
1- \exp\biggl\{ -\int_0^\infty\frac{e^{-s(1+x) }}{1+x} t (dx)
\biggr\}= \sum_{k=1}^\infty\frac{(-1)^{k+1}}{k!}\biggl(\int_0^\infty
e^{-sx} m(dx) \biggr)^k .
\end{equation}
%
%kukabis
Since
%
%e5.20 ###
%
\begin{equation} \label{kindercolazione}\biggl( \int_0^\infty
e^{-sx} m(dx) \biggr)^k = \int_0^\infty e^{-sx } m^{(k)} (dx)
,
\end{equation}
we can rewrite (\ref{preparocolazione}) as
%
%e5.21 ###
%
\begin{equation}\label{tacchi}
\sum_{k=1}^\infty\frac{(-1)^{k+1}}{k!} \int_0^\infty e^{-s x} m^{(k)}
(dx)= \sum_{k=1}^\infty\Biggl(\sum_{j=k}^\infty a_{k,j}\Biggr),
\end{equation}
where
\[
a_{k,j} = \frac{(-1)^{k+1}}{k!} \int_{I_j} e^{-s x} m^{(k)}
(dx) ,\qquad I_j=[j,j+1) \qquad\mbox{for } j \geq1 .
\]
Using again the series expansion of the exponential function and
also (\ref{kindercolazione}), we conclude that
%
%e5.22 ###
%
\begin{eqnarray}
\sum_{k=1}^\infty\sum_{j=1 } ^\infty|a_{j,k}|&=&\sum_{k=1}^\infty
\frac{1}{k!}\int_0^\infty e^{-s x} m^{(k)} (dx)\nonumber\\[-8pt]\\[-8pt]
&=&\exp
\biggl\{\int_0^\infty e^{-sx} m(dx) \biggr\}-1<\infty.\nonumber
\end{eqnarray}
In particular, we can arrange arbitrarily the terms in the series
given by the right-hand side of (\ref{tacchi}), getting always the same
limit. This fact
%together with the definition of $\nu_-,\nu_+$
implies that
%
%e5.23 ###
%
\begin{eqnarray} \label{radio}\qquad
\mbox{r.h.s. of } (\ref{tacchi}) & = & \sum_{j=1}^\infty
\Biggl(\sum_{k=1}^j a_{k,j} \mathbh{1}_{k \ \mathrm{odd}} \Biggr)
+ \sum_{j=1}^\infty\Biggl(\sum_{k=1}^j a_{k,j} \mathbh{1}_{k\
\mathrm{even}} \Biggr)\nonumber\\[-8pt]\\[-8pt]
& = &\int_0^\infty e^{-sx} \nu_+(dx)-\int_0^\infty e^{-sx}
\nu_-(dx) ,\nonumber
\end{eqnarray}
where the Radon measures $\nu_+$
and $\nu_-$ on $[0, \infty)$ are defined as follows:
\begin{eqnarray*}
\nu_+(A)&=& \sum_{k=1}^\infty\frac{\mathbh{1}_{k \ \mathrm{odd}}}{k!}
m^{(k)} (A) ,\\
\nu_-(A)&=& \sum_{k=1} ^\infty\frac{\mathbh{1}_{k\ \mathrm{even}}}{k!} m^{(k)} (A) .
\end{eqnarray*}
We point out that for any bounded Borel subset $A \subset
[0,\infty)$ the above series are indeed finite sums since each
$m^{(k)}$ has support in $[k,\infty)$. In addition, $\nu_+$ and
$\nu_-$ have support contained in $[1, \infty)$ and $[2,\infty)$,
respectively.

Collecting (\ref{animainpena}), (\ref{preparocolazione}),
(\ref{tacchi}) and (\ref{radio}), we obtain that
\[
g(s)= \int
_0^\infty e^{-sx} \nu_+(dx)-\int_0^\infty e^{-sx} \nu_-(dx)
\]
for
all $s>0$. Writing $p_Z$ for the law of $Z$, the above identity
implies that the Laplace transforms of the measures $p_Z+\nu_-$ and
$\nu_+$ coincide on $(0,\infty)$. Due to Theorem 2 in Section XIII.4
of~\cite{Fe2}, this implies that $p_Z+\nu_-=\nu_+$. It follows that
\[
p_Z(A)= \nu_+(A)-\nu_-(A)\qquad \forall A \subset\bbR\mbox{
bounded and
Borel} .
\]
Since for $A$ as above we can write $\nu_+(A)-\nu_-(A)=\sum
_{k=1}^\infty\frac{(-1)^{k+1} }{k!} m^{(k)}(A) $, we get that the
law $p_Z$ coincides with (\ref{barrocciaio}).

It remains now to prove (\ref{brucogiallo}). To this aim we observe
that, since $m^{(k)}$ has support contained in $[k, \infty)$, measure
(\ref{barrocciaio}) equals $m$ on $[1, 2)$. Since $a\leq2$,
and using the definition of the measure $m$ given by
(\ref{bilancia}), we obtain that
\begin{eqnarray*}
\bbE[
e^{-sZ} ; Z< a]&=& \int_{ [1,a) } e^{-sx } p_Z(dx)= \int_{
[1,a) } e^{-sx } m(dx)\\
&=& \int_{[0,a-1)} \frac{e^{-s(1+x)}}{1+x}
t(dx) .
\end{eqnarray*}
This concludes the proof of (\ref{brucogiallo}).

%s5.3 ###
\subsection{\texorpdfstring{Proof of Theorem \protect\ref{cannone}}{Proof of Theorem 5.3}}
We write $\rho(dx)$ for the measure in the right-hand side of
(\ref{bianchini}). Using that $a[\phi^{-1}(x)+1]= 1+ x $, we obtain
for $s\geq0$ that
\begin{eqnarray*}
\int_0^\infty\frac{e^{-a s(1+x) }}{1+x}\rho(dx)
&=&
a^{-1} \int_{[\phi(0), \infty)}\frac{e^{-s
(1+x)}}{a^{-1}(1+x)}t_0(dx)\\
&=&
\int_{[a-1,\infty)}\frac{e^{-s (1+x)}}{1+x} t_0 (dx) .
\end{eqnarray*}
The above identity implies that (\ref{florida}) holds if and only
if
%
%e5.24 ###
%
\begin{equation}\label{petrolio}
\int_0^\infty\frac{e^{-as(1+x) }}{1+x}t_{\infty}(dx)
=
\int_0^\infty\frac{e^{-a s(1+x)}}{1+x}\rho(dx) \qquad \forall s
\geq0 .
\end{equation}
We write $m_\infty$ and $m'$ for the measures on $[1,\infty)$
such that
\[
m_\infty(A)= \int_0 ^\infty\frac{\mathbh{1}_{1+x \in A} }{1+x}
t_{\infty}(dx) ,\qquad m'
(A)= \int_0^\infty\frac{\mathbh{1}_{1+x \in A} }{1+x} \rho(dx)
\]
for bounded Borel subsets $A \subset[1,\infty)$. Then, by
(\ref{petrolio}), we get that (\ref{florida}) holds if and only if
the Laplace transforms of the measures $m_\infty$ and $m' $ coincide
on $(0,\infty)$. By Theorem 2 in Section XIII.4 of~\cite{Fe2}, this
last property is equivalent to the identity $m_\infty=m'$, which is
equivalent to $t_\infty= \rho$.

%s6 ###
\section{Hierarchical Coalescence Process: Proofs}
%s6.1 ###
\subsection{\texorpdfstring{Application of the recursive identity (\protect\ref{pietra}) to the HCP}{Application of the recursive identity (2.8) to the HCP}}\label{secpower}

We begin by collecting some useful formulae for the hierarchical
coalescence process that we derive from results obtained for the
one-epoch coalescence process in the previous section. These
formula will be used throughout the whole section.

We use notation and definitions of Theorem \ref{teo2}. In
particular $\mu$ and $\nu$ are probability measures on $[1,\infty)$
and $\mathbb{R}$, respectively. We define here $X^{(n)}$, $n \in
\bbN_+$, as the length of the leftmost domain inside $(0,\infty)$
at the beginning of the $n$th epoch, that is, $X^{(n)}= x^{(n)} _2
(0)- x^{(n)}_1 (0)$. Moreover we set $Z^{(n)}=X^{(n)}/d^{(n)}$.
Note that $X^{(n)}$ has law $\mu^{(n)}$.
% is a random variable with law $\mu^{(n)}$, $n\in\bbN_+$, and
Also, $\mathbb{E}$ stands for the expectation with respect to the
hierarchical coalescent process starting indifferently from
$\cQ=\operatorname{Ren}(\nu,\mu)$, $\cQ=\operatorname{Ren}(\mu)$ or
$\cQ=\operatorname{Ren}_{\mathbb{Z}}(\mu)$. For any $n \in\mathbb{N}_+$ and
any $s \geq0$ let
%
%e6.1 ###
%
\begin{equation}\label{sciopero} g^{(n)}(s)=\mathbb{E}
\bigl(e^{-sZ^{(n)}}\bigr) ,\qquad
h^{(n)}(s)=\mathbb{E}\bigl(e^{-sZ^{(n)}} \mathbh{1}_{1 \leq Z^{(n)}
<a_n}\bigr),
\end{equation}
where
$a_n=d^{(n+1)}/d^{(n)}$. Thanks to Theorem \ref{teo1}, [see also
(\ref{pietrabisbis})], we get a~system of recursive identities
%
%e6.2 ###
%
\begin{equation} \label{hofreddo1}
1-g^{(n)}(s a_{n-1})=\bigl(1-g_{n-1}(s)\bigr)e^{h^{(n-1)}(s)} \qquad\forall n
\geq2 .
\end{equation}
These recursive identities will be essential in the subsequent
computations. Since $Z^{(n)} \geq1$, by Lemma \ref{gola} there exists a
unique measure $t^{(n)}$ on $[0,\infty)$ such that
% Now by Theorem \ref{cannone}, it holds
%
%e6.3 ###
%
\begin{equation}\label{hofreddo2bis}
g^{(n)} (s) = 1- \exp\biggl\{ - \int_0 ^\infty\frac{e^{-s (1+x) }
}{1+x} t^{(n)} (dx ) \biggr\},\qquad n \geq1.
\end{equation}
Invoking now Theorem \ref{cannone} we conclude that
%
%e6.4 ###
%
\begin{equation}\label{ventoburlonebis}
t^{(n)} = (1/a_{n-1}) t^{(n-1)} \circ\phi_{n-1},\qquad n \geq2,
\end{equation}
where
$\phi_n(x)= a_n (1+x)-1$.

Up to now we have only moved from the
system of recursive identities~(\ref{hofreddo1}) to the new system
%C \eqref{hofreddo2bis}
(\ref{ventoburlonebis}). But while the former is highly nonlinear and
complex, the latter is solvable. Indeed if we define
%
%e6.5 ###
%
\begin{equation}\label{vacanzina}
\psi_n (x):= \phi_1 \circ\phi_2 \circ\cdots\circ\phi_n (x),
\end{equation}
then $\psi_n(x)=d^{(n+1)} (1+x)-1$ and (\ref{gianni}) together with
(\ref{ventoburlonebis}) imply
%
%e6.6 ###
%
\begin{equation}\label{brumsporco}
t^{(n)} = \frac{1}{d^{(n)}} t^{(1)} \circ\psi_{n-1} ,\qquad n
\geq2 .
\end{equation}
%
%which is equivalent to
%t^{(n)} (\{0\})=0 , t^{(n)} (0,x]=\frac{1}{c_n} t^{(1)} (
Finally, using (\ref{brucogiallo}) and (\ref{brumsporco}), it is
simple to check that
%
%e6.7 ###
%
\begin{equation}\label{nuvoletta}
h^{(n)}(s) =
% \int_{[0,c_2-1]} \frac{ e^{-s (1+x)
%}}{1+x} t^{(1)} (dx) , & \mbox{ if } n=1 ,\\
\int_{[d^{(n)}-1, d^{(n+1)}-1)} e^{-s(1+x)/d^{(n)} } (1+x)^{-1} t^{(1)}
(dx) ,\qquad n \geq1,\hspace*{-35pt}
\end{equation}
%
%C
where we used the identity $(1+\psi^{-1}_{n-1}(x))=(1+x)/d^{(n)}$.
%%%%%%%%%%%%%%%%%%%%%%%%%%%%%%%%%%%%%%%%%%%%%%%%%%%%%%%%%%%%%%%%%%%%
%%%%%%%%%%%%%%%%%%%%%%%%%%%%%%%%%%%%%%%%%%%%%%%%%%%%%%%%%%%%%%%%%%%
%s6.2 ###
\subsection{\texorpdfstring{Asymptotic of the interval law in the HCP: Proof of Theorem \protect\ref{teo2}}{Asymptotic of the interval law in the HCP: Proof of Theorem 2.19}}
\label{secproofteo2}

Section \ref{secpower} provides us with most of the tools necessary
for the
proof of Theorem~\ref{teo2}. In particular our starting point is
identity (\ref{hofreddo2bis}):
%
%e6.8 ###
%
\begin{equation} \label{hofreddo2}
g^{(n)} (s) = 1- \exp\biggl\{ - \int_0 ^\infty\frac{e^{-s (1+x) }
}{1+x} t^{(n)} (dx ) \biggr\},\qquad n \geq1 .
\end{equation}
Defining
%
%e6.9 ###
%
\begin{equation}\label{boltino}
U^{(n)}(x)= \cases{
t^{(n)} ([0,x]), &\quad if $x \geq0$,\cr
0, &\quad otherwise,}
\end{equation}
we get that $U^{(n)}$ is a c\`{a}dl\`{a}g function, $dU^{(n)} =
t^{(n)}$ and
$U^{(n)}(x)=0$ for $x<0$. By (\ref{brumsporco}) it holds that
%
%e6.10 ###
%
\begin{eqnarray}\label{pescefritto}
U^{(n)}(x) &=& \frac{1}{d^{(n)}}\bigl[ U^{(1)}(\psi_{n-1} (x))-
U^{(1)}(\psi_{n-1} (0)-)\bigr] ,\nonumber\hspace*{-40pt}\\[-8pt]\\[-8pt]
&=& \frac{1}{d^{(n)}} \bigl[ U^{(1)} \bigl(
d^{(n)} (1+x)-1\bigr)- U^{(1)} \bigl(\bigl(
d^{(n)}-1\bigr)-\bigr)\bigr] ,\qquad n\geq1 .\nonumber\hspace*{-40pt}
\end{eqnarray}
If we fix $n\geq2$, integrate by parts and use
$U^{(n)}(0-)=0$, we can rewrite the integral in (\ref{hofreddo2}) as
%
%e6.11 ###
%
\begin{eqnarray}\label{ananas}
\int_0^\infty\frac{e^{-s(1+x) }}{1+x} t^{(n)}(dx)
&=&
\int_0^\infty\frac{e^{-s(1+x) }}{1+x} \,dU^{(n)}(x) \nonumber\\
&=& \lim_{y \uparrow\infty} \frac{e^{-s(1+y) }}{1+y} U^{(n)}(y)\\
&&{} -
\int
_0^\infty\biggl( \frac{d}{dx} \biggl( \frac{e^{-s(1+x) }}{1+x}\biggr)
\biggr)
U^{(n)}(x) \,dx .\nonumber
\end{eqnarray}
We now use (\ref{limarrosto}), the key hypothesis. Since
$g^{(1)}(s)= g(s)$ because $d^{(1)}=1$, if $w^{(1)}$ denotes the
Laplace transform of $t^{(1)}$ [i.e., $w^{(1)}(s) =
\int_0^\infty e^{-s x} t^{(1)}(dx) $], then (\ref{limarrosto})
together with (\ref{pompiere1}) implies
%
%e6.12 ###
%
\begin{equation}\label{brio2}
\lim_{s\downarrow0} s w^{(1)}(s)= c_0 .
\end{equation}
Finally, Tauberian Theorem 2 in Section XIII.5 of~\cite{Fe2} shows
that (\ref{brio2}) gives
%
%e6.13 ###
%
\begin{equation}\label{marina}
\lim_{y\uparrow\infty} \frac{ U^{(1)}(y)}{y}= c_0 .
\end{equation}
The above limit together with (\ref{pescefritto}) implies that there
exists a
suitable constant $C>0$ such that
%
%e6.14 ###
%
\begin{equation}\label{bound} U^{(n)}(x)
\leq C (1+x) ,\qquad n\geq1 , x \geq0 .
\end{equation}
In
particular, the limit in the right-hand side of (\ref{ananas}) is zero and
%
%e6.15 ###
%
\begin{eqnarray}
\label{miserve}
\int_0^\infty\frac{e^{-s(1+x) }}{1+x} t^{(n)}(dx)=- \int_0^\infty
\biggl( \frac{d}{dx} \biggl( \frac{e^{-s(1+x) }}{1+x}\biggr) \biggr)
U^{(n)}(x) \,dx ,\nonumber\\[-8pt]\\[-8pt]
&&\eqntext{n\geq2.}
\end{eqnarray}
By (\ref{pescefritto}), (\ref{marina}) and the fact that $c_n \to
\infty$, $\lim_{n\to\infty}U^{(n)}(x) \to c_0x$ for all \mbox{$x \geq0$}.
This limit together with
(\ref{bound}) allows us to apply the dominated convergence theorem,
to get that
\begin{eqnarray*}
\lim_{n\to\infty} \int_0^\infty\frac{e^{-s(1+x) }}{1+x}
t^{(n)}(dx)
&=&
-c_0 \int_0 ^\infty\biggl( \frac{d}{dx} \biggl( \frac{e^{-s(1+x)
}}{1+x}\biggr) \biggr) x \,dx \\
&=& c_0 \int_0 ^\infty\frac{e^{-s (1+x)
}}{1+x} \,dx
\end{eqnarray*}
(in the last identity we have simply integrated by parts). In
conclusion we have shown that $g^{(n)}$ converges
point-wise to the function $g^{(\infty)}_{c_0}$ defined as in~(\ref{macedonia}).
Since in addition $\lim_{s\downarrow0 }
g^{(\infty)}_{c_0}(s)=1$, by Theorem 2 in Section~XIII.1 of
\cite{Fe2}, we conclude that $g^{(\infty)}_{c_0}$ is the Laplace
transform of some nonnegative random variable $Z _{c_0}^{(\infty)}$
and that
$Z^{(n)}$ weakly converges to $Z _{c_0}^{(\infty)}$.

Finally, Lemma \ref{limonata} allows us to determine the law of
$Z _{c_0}^{(\infty)}$. Indeed, the measure $m$ associated to
$t(dx):=c_0 \,dx$
by means of (\ref{bilancia}) is simply of the form $m(dx)=
(c_0/x)\mathbh{1}_{x\geq1}\,dx$. In particular $m^{(k) }(dx)= c_0^k
\rho_k(x) \mathbh{1}_{x\geq k} \,dx $ with~$\rho_k$ defined in
(\ref{vonnegut}). It remains then to apply (\ref{barrocciaio}).
\begin{remark}
It is useful to observe that if the initial scale $d^{(1)}$
was different from one than necessarily $g(s)\neq
g^{(1)}(s)$. However, and that is the reason why we could fix
$d^{(1)}=1$, the limit (\ref{limarrosto}) is invariant under rescaling
the variable $s$ by a constant, that is, (\ref{limarrosto}) for $g$
implies the same limit for~$g^{(1)}$.
\end{remark}

%%%%%%%%%%%%%%%%%%%%%%%%%%%%%%%%%%%%%%%%%%%%%%%%%%%%%%%%%%%%%%%%%%%%%%%%%%%%%%%%%%%%%%%%%%%%%%%%%%%%%%%%%%
%%%%%%%%%%%%%%%%%%%%%%%%%%%%%%%%%%%%%%%%%%%%%%%%%%%%%%%%%%%%%%%%%%%%%%%%%%%%%%%%%%%%%%%%%%%%%%%%%%%%%%%%%%

%%%%%%%%%%%%%%%%%%%%%%%%%%%%%%%%%%%%%%%%%%%%%%%%%%%%%%%%%%%%%%%%%%%%%%%%%%%%%%%%%%%
%%%%%%%%%%%%%%%%%%%%%%%%%%%%%%%%%%%%%%%%%%%%%%%%%%%%%%%%%%%%%%%%%%%%%%%%%%%%%%%%%%%
%s6.3 ###
\subsection{\texorpdfstring{Asymptotic of the first point law in the HCP: Proof of Theorem \protect\ref{teo3}}{Asymptotic of the first point law in the HCP: Proof of Theorem 2.24}}\label{secteo3}
We first prove the result for the special case $\nu={\delta}_0$.
We set
\[
\ell^{(n)}(s)=\bbE\bigl[ \exp\bigl\{ -s X^{(n)}_0/ d^{(n)}\bigr\}
\bigr] ,\qquad s\in\bbR_+ .
\]
%
% given by the rescaled position of the
%first point on the left of the configuration at the beginning of the
%$n$-th epoch.
Recall the notation of Section \ref{secpower} and in
particular the definition of the constants $a_n=d^{(n+1)}/d^{(n)}$.
By applying to each epoch the key
identity (\ref{pietrabis}), we get the recursive system,
\[
\ell^{(n)}(s) = \ell^{(n-1)}( s/a_{n-1}) \exp\biggl\{ \frac{
1}{1+\gamma}
\bigl[ h^{(n-1)} (s/a_{n-1} ) - h^{(n-1)}(0)\bigr]\biggr\} ,\qquad n
\geq2 .
\]
Since $a_j a_{j+1} \cdots a_{n-1} = d^{(n)}/d^{(j)}$, by combining the
above recursive identities we get
%
%e6.16 ###
%
\begin{eqnarray}\label{camoscio}\quad
\ell^{(n)}(s)= \ell^{(1)}\bigl( s /d^{(n)}\bigr) \exp\biggl\{ \frac
{1}{1+\gamma} \sum
_{j=1}^{n-1}
\bigl[ h^{(j)}\bigl(s d^{(j)}/d^{(n)}\bigr) -h^{(j)}(0) \bigr]
\biggr\},\nonumber\\[-8pt]\\[-8pt]
&&\eqntext{n \geq2 .}
\end{eqnarray}
We now use the integral representation (\ref{nuvoletta}) to get
%
%e6.17 ###
%
\begin{eqnarray}\label{nuvolettabis}\qquad
h^{(j)}\bigl( s d^{(j)}/d^{(n)}\bigr) = \int_{[d^{(j)}-1, d^{(j+1)}-1)} (1+x)^{-1}
e^{-({s}/{d^{(n)}})(1+x) } t^{(1)} (dx) ,\nonumber\\[-8pt]\\[-8pt]
&&\eqntext{j\geq1 .}
\end{eqnarray}
This allows us to write
%
%e6.18 ###
%
\begin{eqnarray}\label{secchio}
F^{(n)}(s):\!&=& \sum_{j=1}^{n-1}
h^{(j)}\bigl(s d^{(j)}/d^{(n)}\bigr) \nonumber\\[-8pt]\\[-8pt]
&=&
\int_{[0, d^{(n)}-1)} (1+x)^{-1}
e^{-s(1+x)/d^{(n)}} t^{(1)}(dx)
.\nonumber
\end{eqnarray}
Setting $U^{(1)}(x)= t^{(1)}( [0,x])$, we can use
integration by parts and the change of variable
$y=(1+x)/d^{(n)}$ to conclude that
\[
F^{(n)}(s)= [e^{-s}] \frac{U^{(1)}(d^{(n)}-1)}{d^{(n)}}+
\int_{[1/d^{(n)},1)
}e^{-sy }\biggl[ \frac{s}{y}+ \frac{1}{y^2} \biggr] \frac{ U^{(1)}( d^{(n)}
y -1 ) }{ d^{(n)} }\,dy .
\]
In particular, we can write
\begin{eqnarray*}
F^{(n)}(s)-F^{(n)}(0)&=&(e^{-s}-1) \frac
{U^{(1)}(d^{(n)}-1)}{d^{(n)}}\\
&&{}+
\int_{[1/d^{(n)},1) }s e^{-sy } \frac{ U^{(1)}( d^{(n)} y -1 ) }{
d^{(n)} y }\,dy\\
&&{}+
\int_{[1/d^{(n)},1) }\frac{e^{-sy } -1 }{y} \frac{ U^{(1)}( d^{(n)}
y -1 ) }{
d^{(n)} y }\,dy .
\end{eqnarray*}
We have already observed that (\ref{limarrosto}) together with a
Tauberian theorem implies the limit (\ref{marina}). Since $d^{(n)}
\to\infty$, we can then apply the dominated convergence theorem to
conclude that
%
%e6.19 ###
%
\begin{eqnarray}\label{rumorimobili}
&&\lim_{n \to\infty}
F^{(n)}(s)-F^{(n)}(0)\nonumber\\[-8pt]\\[-8pt]
&&\qquad= c_0 \biggl( e^{-s}-1 +
\int_{(0,1)}s e^{-sy } \,dy+ \int_{(0,1) }\frac{e^{-sy } -1 }{y}
\,dy\biggr) .\nonumber
\end{eqnarray}
Collecting (\ref{camoscio}),
(\ref{secchio}) and (\ref{rumorimobili}), we conclude that for any
$s \in\bbR_+ $ the sequence $(\ell^{(n)}(s))_{n\geq1}$
converges to
\[
\exp\biggl\{-
\frac{c_0}{1+\gamma} \int_{(0,1)} \frac{1-e^{-sy} }{y} \,dy \biggr\}
.
\]
Since the latter is continuous at $s=0$ we get the desired
weak convergence (cf. Theorem 3.3.6 in~\cite{D}).

Now we prove the result for a general $\nu$. By translation
invariance, for any $x\in\bbR$,
$\mathbb{P}_{\operatorname{Ren}(\delta_x,\mu)} (X^{(n+1)}_0= x
)=\mathbb{P}_{\operatorname{Ren}(\delta_0,\mu)}
(X^{(n+1)}_0=0)$. Hence, for any bounded continuous
function $f$,
\begin{eqnarray*}
\mathbb{E}_{\operatorname{Ren}(\nu,\mu)} \bigl( f
\bigl(X^{(n)}_0/d^{(n)}\bigr)\bigr) & = &
\int\nu(dx) \mathbb{E}_{\operatorname{Ren}(\delta_x,\mu)} \bigl( f
\bigl(X^{(n)}_0/d^{(n)}\bigr)\bigr) \\
& = &\int\nu(dx) \mathbb{E}_{\operatorname{Ren}(\delta_0,\mu)} \bigl(
f\bigl(\bigl(X^{(n)}_0-x\bigr)/d^{(n)}\bigr)\bigr),
\end{eqnarray*}
and the result follows from the case $\nu=\delta_0$ considered
above once we use the assumption $\lim_{n\to\infty}d^{(n)}=+\infty
$. This completes the proof.

%%%%%%%%%%%%%%%%%%%%%%%%%%%%%%%%%%%%%%%%%%%%%%%%%%%%%%%%%%%%%%%%%%%%%%%%%%%%%%%%%%%%%%%%%%%%%%%%%%%%%%%%%%
%%%%%%%%%%%%%%%%%%%%%%%%%%%%%%%%%%%%%%%%%%%%%%%%%%%%%%%%%%%%%%%%%%%%%%%%%%%%%%%%%%%%%%%%%%%%%%%%%%%%%%%%%%
%s6.4 ###
\subsection{\texorpdfstring{Asymptotic of the survival probability: Proof of Theorem \protect\ref{teo4}}{Asymptotic of the survival probability: Proof of Theorem 2.25}} \label{secteo4}

This section is dedicated to the proof of Theorem \ref{teo4}. We use
the notation and definitions of Section \ref{secpower}. We start
with point (i).\vadjust{\goodbreak}

%s6.4.1 ###
\subsubsection{Proof of \textup{(i)}}

As in the proof of Theorem \ref{teo3} it is enough to consider the
case $\nu={\delta}_0$.
Recall the definition of $\mu^{(n)}$ introduced before Theorem
\ref{teo2}. By a simple induction argument based on Theorem
\ref{teo1}(ii), if the initial law $\cQ$ is
$\operatorname{Ren}(\delta_0,\mu)$, then the law of $\xi^{(j)}(0)$, that
is, the SPP at the
beginning of the $j$th epoch, conditional to the event $\{0 \in
\xi^{(j)}(0) \}$ is $\operatorname{Ren} (\delta_0,\mu^{(j)})$. Hence, by
conditioning and by using the Markov property, we get
\begin{eqnarray*}
\mathbb{P}_{\mathcal{Q}} \bigl( X^{(n+1)}_0=0 \bigr) & = &
\mathbb{P}_{\mathcal{Q}} \bigl( X^{(1)}_0=0\bigr) \prod_{j=1}^{n}
\mathbb{P}_{\mathcal{Q}} \bigl( X^{(j+1)}_0=0 \tc X^{(j)}_0=0
\bigr) \\
& = &\prod_{j=1}^{n} \mathbb{P}_{\operatorname{Ren} (\delta_0,\mu^{(j)})}
\bigl(X^{(j+1)}_0=0 \bigr) .
\end{eqnarray*}
In the last line, we also used the trivial equality
$\mathbb{P}_{\mathcal{Q}} ( X^{(1)}_0=0 )=1$. Theorem~\ref{teo1bis}(ii) ensures that
\[
\mathbb{P}_{\operatorname{Ren} (\delta_0,\mu^{(j)})} \bigl(X^{(j+1)}_0=0
\bigr)= e^{-{h^{(j)}(0)}/({1+\gamma})}\qquad \forall j \geq1,
\]
where, thanks to (\ref{nuvoletta}),
\[
h^{(j)}(0)=\mu^{(j)}\bigl(\bigl[d^{(j)},d^{(j+1)}\bigr)\bigr)=
\int_{[d^{(j)}-1, d^{(j+1)}-1)} (1+x)^{-1}t^{(1)}(dx) .
\]
It follows that
%
%e6.20 ###
%
\begin{eqnarray} \label{eqmizzica}
\mathbb{P}_{\mathcal{Q}} \bigl(X^{(n+1)}_0= 0 \bigr) &=& \exp\Biggl\{
- \frac{1}{1+\gamma} \sum_{j=1}^n h^{(j)}(0) \Biggr\} \nonumber\\[-8pt]\\[-8pt]
&=& \exp\biggl\{ -
\frac{1}{1+\gamma} \int_{[0, d^{(n+1)}-1)} \frac{t^{(1)}(dx)}{1+x}
\biggr\} .\nonumber
\end{eqnarray}
If
$U^{(1)}(x)=t^{(1)}([0,x])$, and using integration by parts one gets
\[
\int_{[0, d^{(n+1)}-1)} \frac{t^{(1)}(dx)}{1+x} =
\frac{U^{(1)}(d^{(n+1)}-1)}{d^{(n+1)}} + \int_{[0, d^{(n+1)}-1)}
\frac{U^{(1)}(x)}{(1+x)^2} \,dx .
\]
As in (\ref{marina}) our assumption implies that $\lim_{y \to
\infty} \frac{U^{(1)}(y)}{y} = c_0$. Since $d^{(n)} \to\infty$ we
get immediately that $\frac{U^{(1)}(d^{(n+1)}-1)}{d^{(n+1)}} =
c_0+o(1)$. On the other hand, if $A = \sqrt{\ln(d^{(n+1)})}$ and
using again that $\lim_{y \to\infty} \frac{U^{(1)}(y)}{y} = c_0$,
we have
\begin{eqnarray*}
\int_{[0, d^{(n+1)}-1)} \frac{U^{(1)}(x)}{(1+x)^2} \,dx
& = &\int_{[0,A)} \frac{U^{(1)}(x)}{(1+x)^2} \,dx + \int_{[A,
d^{(n+1)}-1)} \frac{U^{(1)}(x)}{c_0(1+x)} \frac{c_0}{1+x} \,dx \\
& \leq &
U^{(1)}(A) + \bigl(1+o(1)\bigr) \int_{[A, d^{(n+1)}-1)} \frac{c_0}{1+x} \,dx \\
& = &\bigl(1+o(1)\bigr)c_0 \ln\bigl(d^{(n+1)}\bigr) .
\end{eqnarray*}
Similarly,
\begin{eqnarray*}
\int_{[0, d^{(n+1)}-1)} \frac{U^{(1)}(x)}{(1+x)^2} \,dx &\geq&
\int_{[A, d^{(n+1)}-1)} \frac{U^{(1)}(x)}{c_0(1+x)} \frac{c_0}{1+x}
\,dx \\
&=&
\bigl(1+o(1)\bigr)c_0 \ln\bigl(d^{(n+1)}\bigr).
\end{eqnarray*}
In conclusion\vspace*{1pt} $\int_{[0, d^{(n+1)}-1)} \frac{t^{(1)}(dx)}{1+x} = (1+o(1))c_0
\ln(d^{(n+1)})$. Result (i) of Theorem~\ref{teo4} follows from
(\ref{eqmizzica}).

%s6.4.2 ###
\subsubsection{Proof of \textup{(ii)}}

The second part of Theorem \ref{teo4} follows from part~(i) using the universal coupling introduced in Section \ref{secdomaindyn}.

We distinguish between two cases.
Assume first that $\gamma=0$. This implies ${\lambda}_\ell=0$. In turn,
site $0$ cannot be erased from any ring of its left domain. Hence,
the event $\{0 \in\xi^{(n)}(\infty)\}$ depends only on the rings of the
domains on the right of $0$. Therefore
\[
\mathbb{P}_{\operatorname{Ren}(\mu\tc0)} \bigl( 0 \in\xi^{(n)}(\infty)
\bigr) =
\mathbb{P}_{\operatorname{Ren}(\delta_0,\mu)} \bigl( X^{(n+1)}_0=0
\bigr)
\]
and the expected result follows at once from point (i)
(with $\gamma=0$).

Now assume that $\gamma>0$. Then, by Lemma \ref{sepeff}
%independence of the rings on
%the left and on the right of $0$
we can write
\[
\mathbb{P}_{\operatorname{Ren}(\mu\tc0)} \bigl( 0 \in\xi^{(n)}(\infty)
\bigr)
= \mathbb{P}^*_{\operatorname{Ren}(\delta_0,\mu)} \bigl( X^{(n+1)}_0=0
\bigr) \times
\mathbb{P}_{\operatorname{Ren}(\delta_0,\mu)} \bigl( 0 \in
\xi^{(n)}(\infty)
\bigr),
\]
where $\mathbb{P}^*$ denotes the probability measure with respect to
the hierarchical coalescent process built with ${\lambda
}^{(n,*)}_r={\lambda}^{(n)}_\ell$ and
${\lambda}^{(n,*)}_\ell={\lambda}^{(n)}_r$ (i.e., the mirror with
respect to the origin
of the hierarchical coalescence process built with~${\lambda}^{(n)}_r$ and
${\lambda}^{(n)}_\ell$). The identity ${\lambda}^{(n)}_\ell= \gamma
{\lambda}^{(n)}_r$ implies $
{\lambda}^{(n,*)}_\ell= \frac{1}{\gamma} {\lambda}^{(n,*)}_r$.
Hence, by applying
twice the result of part (i), we get
\begin{eqnarray*}
\mathbb{P}_{\operatorname{Ren}(\mu\tc0)} \bigl( 0 \in\xi^{(n)}(\infty)
\bigr) &=&
\bigl(1/d^{(n+1)}\bigr)^{({c_0}/({1+{1}/{\gamma}}))(1+o(1))}
\bigl(1/d^{(n+1)}\bigr)^{({c_0}/({1+\gamma}))(1+o(1))}\\
&=&
\bigl(1/d^{(n+1)}\bigr)^{c_0(1+o(1))},
\end{eqnarray*}
and the proof is complete.

%s6.5 ###
\subsection{\texorpdfstring{Convergence of moments in the HCP: Proof of Proposition \protect\ref{propkmoment}}{Convergence of moments in the HCP: Proof of Proposition 2.22}} \label{seckmoment}

The proof of Proposition \ref{propkmoment} will be divided in
various steps.
First we will prove the result for $f(x)=x^k$, and then for a generic
function $f$ satisfying $|f(x)| \leq c(1+x^k)$.
The parameter $k \geq1$ is fixed once for all.
%Without loss of generality we take $c=1$.
%We need first to recall some notation and to derive some basic
%properties.

In what follows, we will use the notation and the definitions of
Theorem~\ref{teo2}. In particular $\mu$ and $\nu$ are probability
measures on $[1,\infty)$ and~$\mathbb{R}$, respectively, $X^{(n)}$ is
a random variable with law $\mu^{(n)}$ chosen here as $X^{(n)}=
x^{(n)} _2
(0)- x^{(n)}_1 (0)$, $Z^{(n)}=X^{(n)}/a^{n-1}$ and,
$Z^{(\infty)}=Z_1^{(\infty)}$ is the weak limit of~$Z^{(n)}$ proven
in Theorem~\ref{teo2}. Recall that $d^{(n)}=a^{n-1}$ and in
particular $d^{(1)}=1$. Also, $\mathbb{E}$ stands for the
expectation with respect to the hierarchical coalescent process
starting indifferently from $\cQ=\operatorname{Ren}(\nu,\mu)$,
$\cQ=\operatorname{Ren}(\mu)$ or $\cQ=\operatorname{Ren}_{\mathbb
{Z}}(\mu)$.
Following Section~\ref{secproofteo2}, for any $n \geq1$ and any $s
\geq0$ we introduce
$g^{(n)}(s)=\mathbb{E}(e^{-sZ^{(n)}})$, the Laplace
transform of $Z^{(n)}$, and
$h^{(n)}(s)=\mathbb{E}(e^{-sZ^{(n)}} \mathbh{1}_{1 \leq Z^{(n)}
< a})$. Thanks to Theorem \ref{teo1} [see also
(\ref{hofreddo1})] we have
%
%e6.21 ###
%
\begin{equation} \label{eqrecursione}
1-g^{(n)}(as)=\bigl(1-g^{(n-1)}(s)\bigr)e^{h^{(n-1)}(s)} \qquad\forall s \geq0,
\forall n \geq2 .
\end{equation}

\vspace*{6pt}

\textit{Notation warning.} In the sequel for any pair of $C^\infty$
functions $f,g$
the symbol $D^k f(x)$ will stand for the $k$th derivative of $f$
computed at the point~$x$ while the symbol $D_x^k f(g(x))$ will denote
the $k$th derivative w.r.t~$x$ of~$f(g(x))$.\looseness=1

The above recursive identity will be very useful in our computations.
Note that by Lebesgue's theorem, for any $n$ and any $k$,
%$\mathbb{E}(Z_n^k) < \infty$ if and only if $\lim_{s \to0}
%|g^{(n,k)}(s)| < \infty$ and in this case
$\mathbb{E}([Z^{(n)} ]^k) = \lim_{s \to0} (-1)^kD^k\times g^{(n)}(s) \in
[0,\infty]$. We shall write, for simplicity,
$D^k g^{(n)}(0):=\lim_{s \to0} D^k g^{(n)}(s)$. It is not difficult to
prove by induction on $n$ that $|D^k g^{(n)}(0)| < \infty$, by taking
the $k$th derivative of both sides of (\ref{eqrecursione}), using the
Leibniz rule and the fact that $\bbE( [Z^{(1)} ]^k)<\infty$. In turn
%
%e6.22 ###
%
\begin{equation} \label{eqkmom}
\mathbb{E}\bigl(\bigl[Z^{(n)} \bigr]^k\bigr) < \infty\qquad\forall n \geq1 .
\end{equation}
As a technical preliminary we prove that the above bound holds
uniformly in $n$.
\begin{Lemma} \label{lemtech1}
Assume that $\mu$ as finite $k$th moment, that is,
\mbox{$\mathbb{E}([Z^{(1)} ]^k) < \infty$}. Then
\[
\sup_{n \geq1} \mathbb{E}\bigl(\bigl[Z^{(n)} \bigr]^k\bigr) < \infty.
\]
\end{Lemma}

\begin{pf}%[Proof of Lemma \ref{lemtech1}]
It is not restrictive to take $n\geq2$. By (\ref{eqkmom}),
$\mathbb{E}([Z^{(n)} ]^k)$ is well defined. Moreover, $\mathbb
{E}([Z^{(n)} ]^k) =
(-1)^k D^k g^{(n)}(0)$. Hence, since $x^k e^{-x} \leq B:=k^k e^{-k}$
for $x \geq0$, we have
\begin{eqnarray*}
\mathbb{E}\bigl(\bigl[Z^{(n)} \bigr]^k\bigr) &=& \mathbb{E} \bigl(\bigl[Z^{(n)} \bigr]^k
e^{-Z^{(n)}} \bigr) + (-1)^k
\bigl( D^k g^{(n)}(0) - D^k g^{(n)}(1) \bigr) \\
&\leq& B + \int_0^1
\bigl|
D^{k+1}g^{(n)}(u)\bigr| \,du .
\end{eqnarray*}
The above bound and Lemma \ref{lemtech2} below imply that
\[
\mathbb{E}\bigl(\bigl[Z^{(n)} \bigr]^k\bigr) \leq\frac{3}{2}A+B + \frac
{2ea}{(a-1)a^{n-1}} \int_0^1
\biggl|D^{k+1}g^{(1)}\biggl( \frac{u}{a^{n-1}} \biggr) \biggr|\, du
\]
for some constant $A$ that depends on $k$ and on
$\mathbb{E}([Z^{(1)} ]^k)$. Now by definition of $g^{(1)}$ and Fubini's
theorem, we get that
\begin{eqnarray*}
\int_0^1 \biggl|D^{k+1}g^{(1)}\biggl( \frac{u}{a^{n-1}} \biggr)
\biggr|\,
du &=& \mathbb{E} \biggl( \int_0^1 [Z^{(1)} ]^{k+1} \exp\biggl\{-
\frac{uZ^{(1)}}{a^{n-1}} \biggr\} \,du \biggr)
\\
&=&
a^{n-1} \mathbb{E} \biggl( \bigl[Z^{(1)} \bigr]^k \biggl( 1-\exp\biggl\{-
\frac{Z^{(1)}}{a^{n-1}} \biggr\} \biggr) \biggr)\\
&\leq&
a^{n-1} \mathbb{E} \bigl( \bigl[Z^{(1)} \bigr]^k \bigr) .
\end{eqnarray*}
Therefore,
\[
\mathbb{E}\bigl(\bigl[Z^{(n)} \bigr]^k\bigr) \leq\frac{3}{2}A+B + \frac{2ea}{a-1}
\mathbb{E} \bigl( \bigl[Z^{(1)} \bigr]^k
\bigr),
\]
and the expected result follows.
%from the fact that $\limsup_{n \to\infty} ( A a^n ) <
\end{pf}
\begin{Lemma} \label{lemtech2}
Assume that $\mu$ has finite $k$th moment. Then there exists a~positive constant $A$ (that depends on $k$ and
$\mathbb{E}([Z^{(1)} ]^k)$ but does not depend on $n$) such that
\[
\bigl|D^{k+1}g^{(n+1)}(u) \bigr| \leq A(1+u) + \frac{2ea}{(a-1)a^n}
\biggl|D^{k+1}g^{(1)}\biggl( \frac{u}{a^n} \biggr) \biggr|
\qquad\forall n \geq1, \forall u > 0 .
\]
\end{Lemma}
\begin{pf}%[Proof of Lemma \ref{lemtech2}]
Iterating (\ref{eqrecursione}) we get
%
%e6.23 ###
%
\begin{eqnarray} \label{eqiteration}
&&1-g^{(n+1)}(s)\nonumber\hspace*{-35pt}\\[-8pt]\\[-8pt]
&&\qquad=\biggl(1-g^{(1)}\biggl( \frac{s}{a^{n}} \biggr)
\biggr) \exp
\Biggl\{ \sum_{j=0}^{n-1} h^{(j+1)} \biggl( \frac{s}{a^{n-j}} \biggr)
\Biggr\}\qquad \forall s \geq0, \forall n\geq1 .\nonumber\hspace*{-35pt}
\end{eqnarray}
Hence, by the Leibniz formula,
%
%e6.24 ###
%
\begin{eqnarray}
\label{damminome}
&&D^{k+1}g^{(n+1)}(s) \nonumber\\
&&\qquad = \sum_{\ell= 0}^{k+1}
\pmatrix{k+1\cr\ell} \biggl[D_s^{k+1-\ell}\biggl(g^{(1)}\biggl( \frac
{s}{a^n} \biggr)
-1 \biggr)\biggr]
\nonumber\\
&&\qquad\quad\hspace*{15.1pt}{}\times\Biggl[D_s^\ell\Biggl(\exp\Biggl\{ \sum_{j=0}^{n-1} h^{(j+1)} \biggl( \frac
{s}{a^{n-j}} \biggr) \Biggr\} \Biggr) \Biggr]\nonumber\\
&&\qquad= \frac{1}{a^{n(k+1)}}D^{k+1}g^{(1)}\biggl( \frac{s}{a^n} \biggr)
\exp\Biggl\{ \sum_{j=0}^{n-1} h^{(j+1)} \biggl( \frac{s}{a^{n-j}}
\biggr) \Biggr\} \\
&&\qquad\quad{} +
\sum_{\ell= 1}^{k} \pmatrix{k+1\cr\ell}
\frac{1}{a^{n(k+1-\ell)}} \biggl[D^{k+1-\ell} g^{(1)} \biggl(
\frac{s}{a^n} \biggr)\biggr]\nonumber\\
&&\hspace*{27.8pt}\qquad\quad{}\times\Biggl[ D_s^{\ell}
\Biggl(\exp\Biggl\{ \sum_{j=0}^{n-1} h^{(j+1)} \biggl( \frac
{s}{a^{n-j}} \biggr) \Biggr\}\Biggr)\Biggr] \nonumber\\
&&\qquad\quad{} + \biggl(g^{(1)}\biggl( \frac{s}{a^n} \biggr) -1 \biggr)
D_s^{k+1}\Biggl(\exp
\Biggl\{ \sum_{j=0}^{n-1} h^{(j+1)}
\biggl( \frac{s}{a^{n-j}} \biggr) \Biggr\}\Biggr) .\nonumber
\end{eqnarray}
%
%We deal with each term separately. Our first task is to bound the
%derivative of the exponential term.
%By an easy computation left to the reader we have
%C
In order to bound $D_s^\ell(\exp
\{ \sum_{j=0}^{n-1} h^{(j+1)} ( \frac{s}{a^{n-j}} )
\})$, one has to deal with
\[
\sum_{j=0}^{n-1} D_s^\ell h^{(j+1)} \biggl( \frac{s}{a^{n-j}}
\biggr) =
\sum_{j=0}^{n-1} \frac{1}{a^{\ell(n-j)}}D^\ell h^{(j+1)} \biggl(
\frac{s}{a^{n-j}} \biggr).
\]
%
%Cfor ho tolto la vecchia notaz^{(l)} secondo giusto suggerimento di
%cyril
By definition of $h^{(j+1)}$, we have
\[
\bigl|D^\ell h^{(j+1)}(s)\bigr| =
\mathbb{E} \bigl( \bigl[Z^{(j+1)} \bigr]^\ell e^{-sZ^{(j+1)}} \mathbh{1}_{1
\leq
Z^{(j+1)} < a} \bigr) \leq a^\ell.
\]
Therefore
%$$
%| \sum_{j=0}^{n-1} h^{(j+1)} ( \frac{s}{a^{n-j}}
%)^{(\ell)} | \leq\sum_{j=0}^{n-1} ( \frac{a}{a^{n-j}}
%)^{\ell} = \sum_{j=0}^{n-1} ( a^{-\ell} )^j \leq
%$$
%
\[
\Biggl| \sum_{j=0}^{n-1} D_s^{\ell} h^{(j+1)} \biggl( \frac{s}{a^{n-j}}
\biggr) \Biggr| \leq\sum_{j=0}^{n-1} \biggl( \frac{a}{a^{n-j}}
\biggr)^{\ell} = \sum_{j=0}^{n-1} ( a^{-\ell} )^j \leq
\frac{a^\ell}{a^\ell-1} \qquad\forall\ell\geq1.
\]
In turn, for any $\ell=1,2,\ldots,k+1$, since $h^{(j+1)}(u) \leq
h^{(j+1)}(0)$ for any $u$ and any~$j$,
%
%e6.25 ###
%
\begin{eqnarray} \label{eqexp}
\Biggl|D_s^{(\ell)} \Biggl(\exp\Biggl\{ \sum_{j=0}^{n-1} h^{(j+1)}
\biggl( \frac{s}{a^{n-j}} \biggr) \Biggr\}\Biggr) \Biggr| &\leq& C \exp
\Biggl\{
\sum_{j=0}^{n-1} h^{(j+1)} \biggl( \frac{s}{a^{n-j}} \biggr) \Biggr\}
\nonumber\hspace*{-35pt}\\[-8pt]\\[-8pt]
&\leq& C \exp\Biggl\{ \sum_{j=0}^{n-1} h^{(j+1)}( 0) \Biggr\} \nonumber\hspace*{-35pt}
\end{eqnarray}
for some constant $C$ depending only on $k$, where we used the fact
that for any $F$ smooth enough and any $\ell\geq1$, it holds
\[
D_x^\ell e^{F(x)} = P_\ell
(DF(x),D^2 F(x),\ldots,D^\ell F(x)) e^{F(x)},
\]
where $P_\ell$ is a polynomial in the variables $X_1,\ldots,X_\ell$
of total degree $\ell$, whose coefficients belong to
$\{0,1,\ldots,(\ell+1)!\}$.

Now observe that, for any $\ell=1,2,\ldots,k$, by definition of
$g^{(1)}$ and since \mbox{$Z^{(1)} \geq1$},
%and since $\prod_{i=1}^n a_1 \geq1$,
%
%e6.26 ###
%
\begin{equation}\label{cic}
\bigl| D^{k+1-\ell}g^{(1)} (u) \bigr| = \mathbb{E} \bigl(
\bigl[Z^{(1)} \bigr]^{k+1-\ell} e^{-uZ^{(1)}} \bigr) \leq\mathbb{E}
\bigl(\bigl[Z^{(1)} \bigr]^k\bigr) \qquad\forall u \geq0.\hspace*{-35pt}
\end{equation}
On the other hand, since $1-e^{-x} \leq x$ for $x \geq0$ and since
$Z^{(1)}\geq1$, one has
%
%e6.27 ###
%
\begin{equation}\label{ciac}
\biggl| g^{(1)}\biggl( \frac{s}{a^n} \biggr) -1 \biggr| \leq
\frac{s}{a^n} \mathbb{E}\bigl(Z^{(1)}\bigr) .
\end{equation}
Hence, by (\ref{damminome}), (\ref{eqexp}), (\ref{cic}), (\ref{ciac})
and using the facts that $a>1$ and $h^{(j+1)}(u) \leq
h^{(j+1)}(0)$ for any $u$ and any $j$,
we end up with
%It then follows from the previous considerations (using that $a>1$)
%that
%$$
%|II| \leq\frac{C}{a^n} \exp\{ \sum_{j=0}^{n-1} h^{(j)} (
%$$
%for some constant $C$ depending only on $k$ and $\mathbb{E} (Z_0^k
%)$.
%
%Hence, again by the previous considerations, we end up with
%$$
%|III| \leq\frac{Cs}{a^n} \exp\{ \sum_{j=0}^{n-1} h^{(j)} (
%$$
%for some constant $C$ depending only on $k$ and $\mathbb{E} (Z_0^k
%)$. In turn,
%
\[
\bigl| D^{k+1}g^{(n+1)}(s) \bigr| \leq\biggl( C'(1+s) + \frac{1}{a^{n}}
\biggl|D^{k+1}g^{(1)}\biggl( \frac{s}{a^n} \biggr) \biggr| \biggr) \frac{1}{a^n}
\exp\Biggl\{ \sum_{j=0}^{n-1} h^{(j+1)} ( 0 ) \Biggr\}
\]
for some constant $C'$ depending only on $k$ and
$\mathbb{E}([Z^{(1)} ]^k)$. The expected result of Lemma
\ref{lemtech2} follows from Claim \ref{claim1} below.
\begin{claim} \label{claim1}
For any $n \geq1$ it holds
\[
\frac{1}{a^n} \exp\Biggl\{ \sum_{j=0}^{n-1} h^{(j+1)} ( 0 ) \Biggr\}
\leq
\frac{2ea}{a-1} .
\]
\end{claim}
\begin{pf}%[Proof of Claim \ref{claim1}]
Fix $s \in[0,1]$. Since $Z^{(1)} \geq1$ and $s \in[0,1]$, we have
\[
1-g^{(1)}\biggl( \frac{s}{a^n} \biggr) = 1 - \mathbb{E} \bigl(
e^{-({s}/{a^n}) Z^{(1)}} \bigr) \geq1 - e^{-{s}/{a^n}} \geq
\frac{s}{2a^n} ,
\]
where in the last line we used that $1-e^{-x} \geq\frac{x}{2}$ for
$x \in[0,1]$. Hence, we deduce from (\ref{eqiteration}) that
\begin{eqnarray*}
1 &\geq&1-g^{(n+1)}(s)=\biggl(1-g^{(1)}\biggl( \frac{s}{a^n} \biggr)
\biggr) \exp
\Biggl\{ \sum_{j=0}^{n-1} h_{j+1} \biggl( \frac{s}{a^{n-j}} \biggr)
\Biggr\}\\[-2pt]
&\geq&\frac{s}{2 a^n} \exp\Biggl\{ \sum_{j=0}^{n-1} h^{(j+1)} \biggl(
\frac{s}{a^{n-j}} \biggr) \Biggr\} .
\end{eqnarray*}
Now, by definition of $h^{(j)}$ and since $e^{-x}-1 \geq-x$ for any $x
\geq0$,
\begin{eqnarray*}
&&\sum_{j=0}^{n-1} h^{(j+1)} \biggl( \frac{s}{a^{n-j}} \biggr) \\[-2pt]
&&\qquad =
\sum_{j=0}^{n-1} \mathbb{E} \bigl( \bigl[e^{-({s}/{a^{n-j}})
Z^{(j+1)}}-1\bigr] \mathbh{1}_{1\leq Z^{(j+1)} < a} \bigr)
+ \sum_{j=0}^{n-1} h^{(j+1)} (0) \\[-2pt]
&&\qquad \geq -\sum_{j=0}^{n-1} \frac{s}{a^{n-j}}\mathbb{E}
\bigl(
Z^{(j+1)} \mathbh{1}_{1\leq Z^{(j+1)} < a} \bigr) +
\sum_{j=0}^{n-1} h^{(j+1)} (0) \\[-2pt]
&&\qquad \geq - \sum_{j=0}^{n-1} \frac{s}{a^{n-j-1}} + \sum_{j=0}^{n-1}
h^{(j+1)} (0) \\[-2pt]
&&\qquad= - \frac{s(a^n-1)}{a^{n-1}(a-1)} + \sum_{j=0}^{n-1}
h^{(j+1)} (0) .
\end{eqnarray*}
We deduce that
\[
\frac{1}{a^n} \exp\Biggl\{ \sum_{j=0}^{n-1} h^{(j+1)} ( 0 ) \Biggr\}
\leq
\frac{2}{s} \exp\biggl\{ \frac{s(a^n-1)}{a^{n-1}(a-1)} \biggr\}.
\]
Optimizing over $s \in[0,1]$ (choose
$s=\frac{a^{n-1}(a-1)}{a^n-1}$) finally leads to
\[
\frac{1}{a^n} \exp\Biggl\{ \sum_{j=0}^{n-1} h^{(j+1)} ( 0) \Biggr\}
\leq
\frac{2e(a^n-1)}{a^{n-1}(a-1)} \leq\frac{2ea}{a-1} .
\]
The claim follows.
\end{pf}

The proof of Lemma \ref{lemtech2} is complete.
\end{pf}

We can now prove Proposition \ref{propkmoment} for the special
choice $f(x)=x^k$.
\begin{Proposition} \label{propkmoment1}
\hspace*{5pt}Assume that $\mu$ as finite $k$th moment, that is,\break
$\mathbb{E}([Z^{(1)} ]^k)<\infty$. Then, in the same setting of
Proposition \ref{propkmoment}, $Z^{(\infty)}$ has also finite
$k$th moment. Moreover,
\[
\lim_{n \to\infty} \mathbb{E}\bigl(\bigl[Z^{(n)} \bigr]^k\bigr) =
\mathbb{E}\bigl(\bigl[Z^{(\infty)} \bigr]^k\bigr) .
\]
\end{Proposition}
\begin{pf}
Thanks to Lemma \ref{lemtech1} $\sup_{n \in\mathbb{N}_+}
\mathbb{E}([Z^{(n)} ]^k)<\infty$. Fix a decreasing sequence of
positive numbers $(s_m)_{m \in\mathbb{N}}$ that converges to $0$.
Since $x^k e^{-s_mx}$ is a~continuous bounded function on
$\mathbb{R}_+$, Theorem \ref{teo2} implies that\break $\lim_{n\to
\infty}\mathbb{E} ( [Z^{(n)} ]^k\times e^{-s_mZ_n} )
=\mathbb{E} ( [Z^{(\infty)} ]^k e^{-s_mZ^{(\infty)}} )$.
Hence, by Levi's theorem,
\[
\mathbb{E}\bigl(\bigl[Z^{(\infty)} \bigr]^k\bigr) = \lim_{m \to\infty}
\mathbb{E}
\bigl( \bigl[Z^{(\infty)} \bigr]^k e^{-s_mZ^{(\infty)}} \bigr) = \lim
_{m \to
\infty} \lim_{n\to\infty}\mathbb{E} \bigl( \bigl[Z^{(n)} \bigr]^k
e^{-s_mZ^{(n)}}
\bigr) < \infty.
\]
Hence $Z^{(\infty)}$ has finite $k$th moment.

Next, for any $s>0$, we write
\begin{eqnarray*}
\mathbb{E}\bigl(\bigl[Z^{(n)} \bigr]^k\bigr) & = &
\mathbb{E}\bigl( \bigl[Z^{(n)} \bigr]^k e^{-sZ^{(n)}} \bigr) + (-1)^k
\bigl( D^k g^{(n)}(0) - D^k g^{(n)}(s) \bigr) \\
& = &\mathbb{E} \bigl( \bigl[Z^{(n)} \bigr]^k e^{-sZ^{(n)}} \bigr) + (-1)^{k+1}
\int_0^{s} D^{k+1}g^{(n)}(u)\, du .
\end{eqnarray*}
Hence, thanks to Lemma \ref{lemtech2},
\begin{eqnarray*}
&&\bigl| \mathbb{E}\bigl(\bigl[Z^{(n)} \bigr]^k\bigr) - \mathbb{E}\bigl(\bigl[Z^{(\infty)} \bigr]^k\bigr)
\bigr| \\
&&\qquad \leq
\bigl| \mathbb{E} \bigl( \bigl[Z^{(n)} \bigr]^k e^{-sZ^{(n)}} \bigr) -
\mathbb{E}\bigl(\bigl[Z^{(\infty)} \bigr]^k\bigr) \bigr| + \int_0^s
\bigl|D^{k+1}g^{(n)}(u)\bigr|\, du \\
&&\qquad \leq \bigl| \mathbb{E} \bigl( \bigl[Z^{(n)} \bigr]^k e^{-sZ^{(n)}} \bigr) -
\mathbb{E}\bigl(\bigl[Z^{(\infty)} \bigr]^k\bigr) \bigr| + As(1+s) \\
&&\qquad\quad{} + \frac{2ea}{(a-1)a^{n-1}}
\int_0^s \biggl|D^{k+1}g^{(1)}\biggl( \frac{u}{a^{n-1}}
\biggr)\biggr|\,
du,
\end{eqnarray*}
where $A$ is a positive constant that depends on $k$ and
$\mathbb{E}([Z^{(1)} ]^k)$ but does not depend on $n$. Note that,
by definition of $g^{(1)}$, Fubini's theorem and then the dominated
convergence theorem,
\begin{eqnarray*}
&&\frac{1}{a^{n-1}}\int_0^s \biggl| D^{k+1}g^{(1)}\biggl(
\frac{u}{a^{n-1}} \biggr)\biggr| \,du \\
&&\qquad =
\frac{1}{a^{n-1}} \mathbb{E} \biggl( \int_0^s \bigl[Z^{(1)} \bigr]^{k+1}
e^{-(uZ^{(1)})/a^{n-1}}\,du \biggr) \\
&&\qquad = \mathbb{E} \bigl( \bigl[Z^{(1)} \bigr]^{k} \bigl[1-
e^{-(sZ^{(1)})/a^{n-1}} \bigr]
\bigr)
%Cfor \xrightarrow[n\to\infty]{0} 0
\mathop{\hbox to 1cm{\rightarrowfill}}_{n\to\infty} 0.
\end{eqnarray*}
By applying Theorem \ref{teo2} $\mathbb{E} ( [Z^{(n)} ]^k
e^{-sZ^{(n)}} )
\to\mathbb{E} ( [Z^{(\infty)} ]^k e^{-sZ^{(\infty)}}
)$ when $n$
tends to infinity. Therefore,
\begin{eqnarray*}
&&\lim_{n \to\infty} \bigl| \mathbb{E}\bigl(\bigl[Z^{(n)} \bigr]^k\bigr) -
\mathbb{E}\bigl(\bigl[Z^{(\infty)} \bigr]^k\bigr) \bigr| \\
&&\qquad\leq\bigl| \mathbb{E} \bigl(
\bigl[Z^{(\infty)} \bigr]^k e^{-sZ^{(\infty)}} \bigr) - \mathbb
{E}\bigl(\bigl[Z^{(\infty)} \bigr]^k\bigr) \bigr| +
As(1+s) \qquad\forall s>0 .
\end{eqnarray*}
The proof is completed by taking the limit as $s\downarrow0$.
\end{pf}
\begin{pf*}{Proof of Proposition \ref{propkmoment}}
Let $f$ be such that $|f(x)| \leq C+Cx^k$. For any $L \geq0$ we
define $f_L(x)=f(x)$ if $|x| \leq L$, and $f_L(x)=f(L)$ if $|x| \geq
L$. Note that by Proposition \ref{propkmoment1} $\mathbb{E} (
f(Z^{(\infty)}) ) < \infty$. Also, $|f(x) - f_L(x)| \leq
2C(1+x^k)\mathbh{1}_{|x| \geq L}$, and $f_L$ is bounded by
construction. It follows that
\begin{eqnarray*}
&&
\bigl| \mathbb{E}\bigl(f\bigl(Z^{(n)}\bigr)\bigr) - \mathbb{E}\bigl(f\bigl(Z^{(\infty)}\bigr)\bigr)
\bigr|\\
&&\qquad \leq
\bigl| \mathbb{E}\bigl(f\bigl(Z^{(n)}\bigr)\bigr) - \mathbb{E}\bigl(f_L\bigl(Z^{(n)}\bigr)\bigr) \bigr| +
\bigl| \mathbb{E}\bigl(f_L\bigl(Z^{(n)}\bigr)\bigr) - \mathbb{E}\bigl(f_L\bigl(Z^{(\infty)}\bigr)\bigr)
\bigr| \\
&&\qquad\quad{} +
\bigl| \mathbb{E}\bigl(f_L\bigl(Z^{(\infty)}\bigr)\bigr) - \mathbb{E}\bigl(f\bigl(Z^{(\infty)}\bigr)\bigr)
\bigr| \\
&&\qquad \leq
2C \mathbb{E}\bigl(\bigl(1 + \bigl[Z^{(n)} \bigr]^k\bigr) \mathbh{1}_{Z^{(n)} \geq L}
\bigr)
+ \bigl| \mathbb{E}\bigl(f_L\bigl(Z^{(n)}\bigr)\bigr) - \mathbb{E}\bigl(f_L\bigl(Z^{(\infty)}\bigr)\bigr)
\bigr| \\
&&\qquad\quad{} + 2C \mathbb{E}\bigl(\bigl(1 + \bigl[Z^{(\infty)} \bigr]^k\bigr) \mathbh
{1}_{Z^{(\infty)}
\geq L} \bigr) .
\end{eqnarray*}
Since $f_L$ is bounded and continuous, $\lim_{n \to\infty}
| \mathbb{E}(f_L(Z^{(n)})) - \mathbb{E}(f_L(Z^{(\infty)}))
| =0$. On the other hand, taking $L$ among the points of
continuity of the distribution function of $Z^{(\infty)}$, using
that $x \mapsto x^k \mathbh{1}_{x < L}$ and $x \mapsto\mathbh{1}_{x
< L}$ are bounded and Proposition~\ref{propkmoment1}, we have
\begin{eqnarray*}
&&\lim_{n \to\infty} \mathbb{E}\bigl(\bigl(1 + \bigl[Z^{(n)} \bigr]^k\bigr)
\mathbh{1}_{Z^{(n)} \geq L} \bigr) \\
&&\qquad = \lim_{n \to
\infty}\bigl\{ 1 - \mathbb{E}\bigl(\mathbh{1}_{Z^{(n)} < L} \bigr)
+
\mathbb{E}\bigl(\bigl[Z^{(n)} \bigr]^k\bigr) - \mathbb{E}\bigl(\bigl[Z^{(n)}
\bigr]^k \mathbh{1}_{Z^{(n)} < L} \bigr)\bigr\} \\
&&\qquad = 1 - \mathbb{E}\bigl(\mathbh{1}_{Z^{(\infty)} < L} \bigr) +
\mathbb{E}\bigl(\bigl[Z^{(\infty)} \bigr]^k\bigr) - \mathbb{E}
\bigl(\bigl[Z^{(\infty)} \bigr]^k \mathbh{1}_{Z^{(\infty)} < L} \bigr)\\
&&\qquad = \mathbb{E}\bigl(\bigl(1 + \bigl[Z^{(\infty)} \bigr]^k\bigr) \mathbh{1}_{Z^{(\infty
)} \geq L}
\bigr) .
\end{eqnarray*}
Therefore,
\[
\lim_{n \to\infty} \bigl| \mathbb{E}\bigl(f\bigl(Z^{(n)}\bigr)\bigr) -
\mathbb{E}\bigl(f\bigl(Z^{(\infty)}\bigr)\bigr) \bigr| \leq4C \mathbb{E}\bigl(\bigl(1 +
\bigl[Z^{(\infty)} \bigr]^k\bigr) \mathbh{1}_{Z^{(\infty)} \geq L} \bigr) .
\]
Now, since $\mathbb{E}([Z^{(\infty)} ]^k) < \infty$
and by
Lebesgue's theorem, the right-hand side of the latter tends to $0$
when $L$ tends to infinity. This achieves the proof.~%
\end{pf*}

\begin{appendix}

%s7 ###
\section{\texorpdfstring{Proof of Lemma \lowercase{\protect\ref{prelimmainth}}}{Proof of Lemma 2.17}}
\label{appA}

%Finally, the proof of (iii) follows from (ii) observing that the
%bound $\bbE(X_0)<\infty$ implies the limit \eqref{insalata} with
%$c_0=1$.
We provide the proof of Lemma \ref{prelimmainth}.
Part (i) follows immediately from Lem\-ma~\ref{gola}.

As far as part (ii) is concerned, if the mean of $\mu$ is finite, it
is trivial to check that limit (\ref{limarrosto}) holds with
$c_0=1$. Indeed, by the Dominated Convergence theorem, both $-g'(s)$
and $(1-g(s))/s$ converge to the mean as $s\downarrow0$. Let us now
assume that the mean is infinite and that for some ${\alpha}\in[0,1]$
$\bar F(x):= \mu((x,\infty))=x^{-{\alpha}} L(x)$ for
some slowly
varying function $L$. Notice that $\bar F(1-)=1$. Let
\begin{eqnarray*}
Z_A(s)&=& \int_{[As,\infty)} ( e^{-y}-y e^{-y} ) y^{-{\alpha}} L
(y/s) \,dy ,\\
W_A(s) &=& \int_{[A s,\infty)} e^{-y} y^{-{\alpha}} L (y/s) \,dy .
\end{eqnarray*}
Using integration by parts and the change of variables $y=sx$, given
$A>1$ we can write
%
%e7.1 ###
%
\begin{eqnarray}\label{polipo1}
-s g'(s)& = &-\int_{[1,\infty)} sx e^{-s x} \,d \bar F (x) \nonumber\\
& = &-\int_{[1,A)} sx e^{-s x} \,d \bar F (x)
-s x e^{-s x} \bar
F(x) \bigg|_{A-}^\infty\nonumber\\
&&{}+ \int_{[A,\infty)} (e^{-sx}-s x e^{-sx} ) \bar
F(x) s \,dx\\
%= sd_0 e^{-sd_0} +\int_{[d_0/s,\infty)} (e^{-y}-y
%e^{-y} ) \bar F(y/s ) dy \\ =
& = &-\int_{[1,A)} sx e^{-s x} \,d \bar F (x)+ s A e^{-s A } \bar F
(A-) +s^{\alpha} Z_A(s) \nonumber\\
&=& \cE+s^{\alpha} Z_A(s) ,\nonumber
\end{eqnarray}
where the error term $\cE$ satisfies $|\cE| \leq sA $.
Similarly, we can
write
%
%e7.2 ###
%
\begin{eqnarray}\label{polipo2}
1 -g(s)&=&1+\int_{[1,\infty)} e^{-s x} \,d \bar F (x) \nonumber\\
&=&
1+\int_{[1,A)} e^{-s x} \,d \bar F (x)-e^{-s A}\bar F (A-) + s^{\alpha}
W_A(s)\\
&=&\cE'+ s^{\alpha} W_A (s) \nonumber
\end{eqnarray}
%
%where, by taking some
and via a Taylor expansion we get $|\cE'|\leq C(A) s $ for a suitable
positive constant $C(A)$ depending only on $A$. Since the mean is
infinite, the monotone convergence theorem and De l'Hopital rule
imply that
%
%e7.3 ###
%
\begin{equation}\lim_{s \downarrow0} \bigl(1-g(s)\bigr)/s= \lim_{s
\downarrow0} - g'(s)= \infty.
\end{equation}
Comparing the above limits with (\ref{polipo1}) and (\ref{polipo2})
we deduce that both~$s^{\alpha} Z_A(s)$ and $s^{\alpha} W_A(s)$ must
diverge as $s$
goes to zero. In particular, limit~(\ref{limarrosto}) is equivalent to the limit
%
%e7.4 ###
%
\begin{equation}\label{polentaX}
\lim_{A \uparrow\infty} \lim_{s\downarrow0 }
\frac{Z_A(s)}{W_A(s) }={\alpha} .
\end{equation}

As proved in
\cite{Fe2} (see Section VIII.9 there), $L$ is slowly varying at
$\infty$ if and
only if it is of the form
%
%e7.5 ###
%
\begin{equation}\label{grufalo}
L(x)= a(x) \exp\biggl\{ \int_{1} ^x \frac{ \e(y)}{y} \,dy \biggr\},
\end{equation}
where $\e(x)\to0$ and $a(x) \to c < \infty$ as $x \to\infty$.
In particular, given ${\delta}>0$,
for any $x$ large enough $ x^{-{\delta}} \leq L(x) \leq x^{{\delta}}$.
Since in (\ref{grufalo})
$\e(x) \to0$ and $a(x) \to c < \infty$, for any $ {\delta}>0$ there
exists $A>0$ such that $c/2 \leq a(x) \leq2c$ and $|\e(x)| \leq
{\delta}
$ for $x \geq A$. Thus, for any $s<1/A$ the integral representation
(\ref{grufalo}) implies that
%
%e7.6 ###
%
\begin{equation}\label{ranaevolpe}
\frac{1}{4} ( y^{-{\delta}} \wedge y^{{\delta}} ) \leq\frac{
L(y/s)}{L (1/s) } \leq4 ( y^{-{\delta}} \lor y ^{\delta}),\qquad y
\geq
As .
\end{equation}

We now distinguish two cases:\vspace*{8pt}

$\bullet$ Case ${\alpha} \in[0,1)$. Choose ${\delta}>0$
such that
${\alpha}+{\delta}<1$, $A>1$ and $s \geq1/A$. The Dominated
Convergence theorem
together with (\ref{ranaevolpe}) implies that
%
%e7.8 ###
%e7.7 ###
%
\begin{eqnarray}
\label{maradona1}
\lim_{s \downarrow0} Z_A (s) / L(1/s)&=& \int_0^\infty( e^{-y}-y
e^{-y} ) y^{-{\alpha}} dy ,\\
\label{maradona2}
\lim_{s \downarrow0} W_A (s) / L(1/s)&=& \int_0^\infty e^{-y}
y^{-{\alpha}} dy .
\end{eqnarray}
At this point, (\ref{polentaX}) follows from (\ref{maradona1}),
(\ref{maradona2}) and a trivial calculation.

$\bullet$ Case ${\alpha} =1$.
It is convenient to write
\[
Z_A(s)= W_A(s)-T_A(s) ,\qquad T_A(s):=\int_{[A s,\infty)} e^{-y}
L (y/s)\,
dy .
\]
Then, (\ref{polentaX}) follows if we can prove that
%
%e7.9 ###
%
\begin{equation}\label{polentabis}
\lim_{A \uparrow\infty} \limsup_{s\downarrow0 }
\frac{T_A(s)}{W_A(s) }=0 .
\end{equation}
Given ${\delta}>0$ we take $A>1$ and $s \leq1/A$ assuring
(\ref{ranaevolpe}). Then we can bound
\[
\frac{T_A(s)}{W_A(s)}\leq\frac{ 4 \int_0^\infty e^{-y} (y^{{\delta}}
\lor y^{-{\delta}} ) \,dy }{ (1/4e) \int_{As}^1 ({1 }/{y})
(y^{-{\delta}}
\wedge y^{{\delta}} )}= \frac{C}{ \int_{As}^1 y^{{\delta}-1} \,dy }=
\frac{{\delta}{C}}{(1-(As)^{\delta})} .
\]
The above bound trivially implies (\ref{polentabis}).

%s8 ###
\section{\texorpdfstring{An example of interval law not satisfying~(\lowercase{\protect\ref{limarrosto}})}{An example of interval law not satisfying (2.14)}}\label{controcampo}

%C mancava un paragrafo per fare il legame con la numerica..
We provide here an example of a law which does not satisfy (\ref
{limarrosto}) and therefore does not fulfill the hypothesis under which
our main Theorem \ref{teo2} holds. Furthermore we
have numerically analyzed the
set of identities (\ref{brumsporco}) with $t^{(1)}$ corresponding to
this choice
for the initial distribution. The results for the corresponding
function $U^{(n)}$, displayed in Figure \ref{figsimulazione},
%
%f4 ###
%
\begin{figure}

\includegraphics{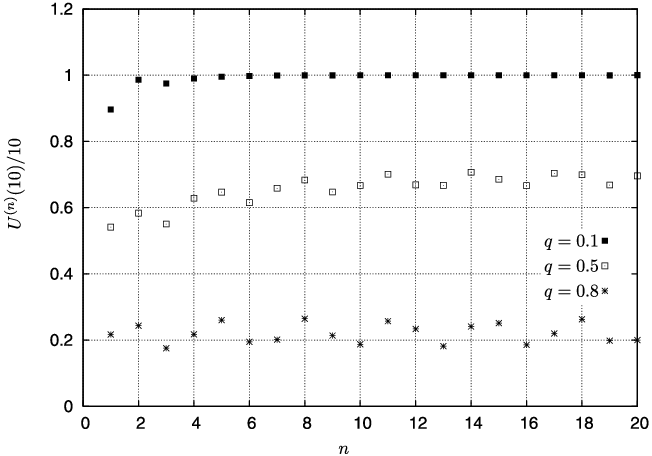}

\caption{We consider an HCP with $d^{(n)}=2^{n-1}$ (the relevant
choice to describe East model) with initial law specified in
Proposition \protect\ref{B1} and parameter $q:=1-p=0.1, 0.5,0.8$. In the first
case the limit (\protect\ref{limarrosto}) exists with $c_0=1$. Instead, for $q=0.5, 0.8$
as proven in Proposition \protect\ref{B1} the limit (\protect\ref{limarrosto}) does not exists. We
plot here $U^{(n)}(x)/x$ for $x=10$ as a~function of $n$. The data
indicate clearly that for $p=0.1$ $U^{(n)}(10)/10$ converges to $1$
as we have proven [see Theorem \protect\ref{teo2}
and especially the comment below formula (\protect\ref{miserve})].
Instead of the other two choices of the parameters,
$U^{(n)} (10)/10$ has an oscillating behavior which strongly indicates
the nonexistence of the
limit for $U^{(n)}(x)$, hence for $g^{(n)}(s)$. We have checked that an
analogous behavior
occurs for different choices of $x$. Note that if we were instead
considering for the same initial distribution but a different choice of
$d^{(n)}$ (satisfying the basic hypothesis $d^{(n)}\to\infty$ for
$n\to\infty$) we would get the same behavior. Indeed if we consider for
example the choice relevant for the Paste-all model, $d^{(n)}=n$, then
the plot of $U^{(n)}(10)/10$ would exactly be the same as above but
with $n$ replaced by $\log_2(n)$ in the $x$-axis (and in this case our
data would cover $2^{20}$ epochs).} \label{figsimulazione}
\end{figure}
strongly suggest that in this case the measure $\mu^{(n)}$ does not
have a well-defined limiting behavior as $n\to\infty$.
\begin{Proposition}
\label{B1}
Let $G$ be a geometric random variable with parameter $p=1-e^{-\lambda
}$, $\lambda\in(0,1)$. Define $X = e^G$
and $g(s)= \mathbb{E} ( e^{-sX} )$, $s \geq0$.
Then, $\lim_{s \to0} \frac{sg'(s)}{1-g(s)}$ does not exist. More precisely,
for any $\alpha\in[0,1)$ and any $n \in\mathbb{N}$, set
$s_n=e^{-n-\alpha}$.
Then $\lim_{n \to\infty} \frac{s_ng'(s_n)}{1-g(s_n)} =: L_\alpha$ exists,
and ${\alpha} \rightarrow L_\alpha$ is a nonconstant function.
\end{Proposition}

Note that the constraint ${\lambda} \in(0,1)$ is equivalent to the fact
that $X$ has infinite mean.
\begin{pf*}{Proof of Proposition \ref{B1}}
Fix $\alpha\in[0,1)$ and set $s_n=e^{-n-\alpha}$.
Since $\mathbb{P}(G=k)=p(1-p)^{k-1}$ for $k\geq1$, we have
$\bar{F}(x)= \mathbb{P}(X \geq x) = e^{-\lambda\lceil\ln x \rceil+
\lambda}
= x^{-\lambda} e^{ \lambda\{ \ln x \}}$
where $\lceil z \rceil= z + 1-\{z\}$ is the ceiling function of $z$
(i.e., the smallest integer greater than or equal to $z$), and $\{z\}
\in(0,1]$ is the fractional part of $z$. Note that $\bar{F}(e)=1$.

Then, using an integration by parts and the change of variables $u=sx$,
we have
\begin{eqnarray*}
-g'(s)
& = &
\mathbb{E} ( Xe^{-sX} ) = - \int_{[e,\infty)} xe^{-sx}
\,d\bar{F}(x)\\
&=&
e^{1-se} + \int_{[e,\infty)} (1-sx)e^{-sx} \bar{F}(x) \,dx \\
& = &
e^{1-se} + \frac{1}{s}\int_{[es,\infty)} (1-u)e^{-u} \bar{F}
\biggl(\frac{u}{s} \biggr) \,du \\
& = &
e^{1-se} + s^{\lambda-1} \int_{[es,\infty)} (1-u)u^{-\lambda}
e^{-u} e^{\lambda\{ \ln u - \ln s \}} \,du .
\end{eqnarray*}

Similarly
\[
1-g(s)= 1 + \int_{[e,\infty)} e^{-sx} \,d\bar{F}(x)
= 1-e^{-se} +s^\lambda\int_{[es,\infty)} u^{-\lambda} e^{-u}
e^{\lambda\{ \ln u - \ln s \}} \,du .
\]
Since $\{\ln u - \ln s_n \} = \{\ln u + \alpha\}$ for any $n$, it
follows that [recall that $\lambda\in(0,1)$]
\begin{eqnarray*}
\lim_{n \to\infty} \frac{-s_ng'(s_n)}{1-g(s_n)}
& = &
\lim_{n \to\infty}
\frac{s_ne^{1-s_ne} + s_n^{\lambda} \int_{[es_n,\infty)}
(1-u)u^{-\lambda} e^{-u} e^{\lambda\{ \ln u + \alpha\}}\,
du}{1-e^{-s_ne} + s_n^\lambda\int_{[es_n,\infty)} u^{-\lambda}
e^{-u} e^{\lambda\{ \ln u + \alpha\}} \,du} \\
& = &
\frac{\int_{(0,\infty)} (1-u)u^{-\lambda} e^{-u} e^{\lambda\{ \ln
u + \alpha\}} \,du}
{\int_{(0,\infty)} u^{-\lambda} e^{-u} e^{\lambda\{ \ln u + \alpha
\}} \,du}=:L_{\alpha} .
\end{eqnarray*}

Suppose $L_{\alpha}$ to be equal to $1-C$ for all ${\alpha}$. Then,
by the change of variable $v= u/{\beta}$ where $ {\beta}=e^{-{\alpha
}}$, we can write
%
%e8.1 ###
%
\begin{eqnarray}\label{ciuccio}
1- L_{\alpha} &=& \frac{{\beta} \int
_0^\infty v^{-{\lambda}+1}e^{-{\beta}{v}} e^{{\lambda} \{\ln v\}
}\,dv}{ \int_0^\infty v^{-{\lambda}}e^{-{\beta}{v}} e^{{\lambda} \{
\ln v\} }\,dv}\nonumber\\[-8pt]\\[-8pt]
&=&
\frac{(B+1) \int_0^\infty v^{-{\lambda}+1}e^{-v} e^{-B v}
e^{{\lambda} \{\ln v\}
}\,dv}{ \int_0^\infty v^{-{\lambda}}e^{-B v}e^{-v} e^{{\lambda} \{\ln
v\} }\,dv}=C,\nonumber
\end{eqnarray}
where $B={\beta}-1$. Consider now the functions $f$ and $g$ on $\bbD
=\{z
\in\bbC\dvtx|z|<1\}$ defined as
\begin{eqnarray*}
f(z)&=&\int_0^\infty v^{-{\lambda}+1}e^{-v} e^{-z v} e^{{\lambda} \{
\ln v\}
}\,dv ,\\
g(z)&=&\int_0^\infty v^{-{\lambda}}e^{-z v}e^{-v} e^{{\lambda} \{\ln
v\} }\,dv .
\end{eqnarray*}
By Fubini's theorem and the series expansion of the exponential
function, one gets that $f$ and $g$ are holomorphic functions on
$\bbD$. Hence, the same holds for the function
$ H (z)= (1+z) f(z)- C g(z)$. Due to the last identity in (\ref{ciuccio})
we get that $H$ is zero on a subinterval of
the real line. Due to a theorem of complex analysis, the zeros of a
nonconstant holomorphic function are isolated points. As a
byproduct, we get that $H(z)= 0$ for all $z\in\bbD$.

Writing the power expansion of $H(z)$ around $z=0$ and using that
$H\equiv0 $, we get
%$$
% \int_0^\infty[v^{-{\lambda}+1+n} +v^{-{\lambda}+n}] e^{-v} e^{{
% C \int_0^\infty v^{-{\lambda}+n} e^{-v} e^{{\lambda} \{\ln v\} }\,dv
% ,
% \forall n \geq0 ,
% $$
% that is
%
\[
\int_0^\infty v^{-{\lambda}+1+n} e^{-v} e^{{\lambda} \{\ln v\} }\,dv=
(C-1) \int_0^\infty v^{-{\lambda}+n} e^{-v} e^{{\lambda} \{\ln v\}
}\,dv \qquad
\forall n \geq0 .
\]
Note that it must be $C>1$.
By iteration we get
%
%e8.2 ###
%
\begin{equation}\label{daichecelafai}\int_0^\infty v^{-{\lambda}+n}
e^{-v} e^{{\lambda} \{\ln v\} }\,dv \leq
(C-1)^n,\qquad n\geq0.
\end{equation}
On the other hand the above left-hand side is larger than
$ \G(n+1-{\lambda}):=\int_0^\infty v^{-{\lambda}+n} e^{-v} \,dv$.
% \leq
Iterating the identity $\G(z+1)=z \G(z) $ we get that
\[
\G(n+1-{\lambda}) = (n-{\lambda})(n-{\lambda}-1) \cdots(1-{\lambda
}) \G(1-{\lambda}) ,
\]
which leads to a contradiction with
(\ref{daichecelafai}).
\end{pf*}

%s9 ###
\section{$\bbZ$-stationary SPPs}\label{puntini}
$\bbZ$-stationary SPPs and stationary SPPs have many common
features. In this Appendix we point out some properties of
$\bbZ$-stationary SPPs, whose proof (only sketched here) follows by
suitably adapting the arguments used in the continuous case.

Let us suppose that $\cQ$ is the law of a $\bbZ$-stationary SPP,
nonempty a.s. We derive here\vadjust{\goodbreak} some identities relating $\cQ$ to the
conditional probability measure $\cQ_0:=\cQ(\cdot|0\in\xi)$. These
identities are similar to the ones relating the law of a stationary
SPP to its Palm distribution~\cite{DV,FKAS}. Since all
random sets are included in $\bbZ$ it is more natural to work with
the subspaces of $\cN$ defined as
%
%e9.2 ###
%e9.1 ###
%
\begin{eqnarray}
\cN^{\bbZ}&=&\{ \xi\in\cN\dvtx\xi\subset\bbZ\} , \\
\cN^{\bbZ}_0&=&\{ \xi\in\cN^\bbZ\dvtx0\in\xi\} .
\end{eqnarray}
Moreover, we prefer to write ${\tau}_x \xi$ instead of $\xi-x$.

Similarly to (\ref{scimmiazimbo}) we get a simple relation
characterizing $\cQ$ by means of~$\cQ_0$:
\begin{Lemma} Given a nonnegative measurable function $f$ on $\cN
^\bbZ$ it holds
%
%e9.3 ###
%
\begin{equation}\label{zimbozeta}
\int\cQ(d\xi) f(\xi) = \cQ(0\in\xi) \int\cQ_0(d \xi) \sum
_{x=0} ^{x_1(\xi)-1 } f( {\tau}_x\xi) .
\end{equation}
\end{Lemma}
\begin{pf}
From the $\bbZ$-stationarity of $\cQ$ it is simple to
derive for all measurable functions $g \dvtx\cN^{\bbZ}_0\to[0,\infty)$
and $t\in\bbN_+$ that
%
%e9.4 ###
%
\begin{equation}\label{fkas128}
\int\cQ_0(d\xi) g(\xi) =\frac{1}{t \cQ(0\in\xi) }\int\cQ(d\xi)
\sum_{y\in\xi\cap(0, t]} g({\tau}_y \xi).
\end{equation}
Given a measurable map $v\dvtx\bbZ\times\cN^\bbZ_0\to[0,\infty)$,
setting $g(\xi)= \sum_{x\in\bbZ} v(x,\xi)$ in the above identity,
we get
%
%e9.5 ###
%
\begin{equation}\label{wordpress2010}\qquad
\sum_{x\in\bbZ}\int\cQ_0 (d \xi) v(x, \xi) = \frac{1}{t \cQ
(0\in
\xi) } \int\cQ(d\xi) \sum_{x\in\bbZ}\sum_{y\in\xi\cap(0,t]}
v(x, {\tau}_y \xi) .
\end{equation}
Reasoning as in the proof of (1.2.10) in~\cite{FKAS}, for any
measurable function $w\dvtx\bbZ\times\bbZ\times\cN^\bbZ\to
[0,\infty)$ we get
%
%e9.6 ###
%
\begin{eqnarray}\label{fkas1210}
&&\sum_{x\in\bbZ}\sum_{y\in\bbZ}\int\cQ(d\xi) w (x,y, {\tau}_y
\xi)
\mathbh{1}(y\in\xi) \nonumber\\[-8pt]\\[-8pt]
&&\qquad=\sum_{x\in\bbZ}\sum_{y\in\bbZ}\int
\cQ(d\xi) w(y,x, {\tau}_y \xi) \mathbh{1}(y\in\xi) .\nonumber
\end{eqnarray}
Combining (\ref{wordpress2010}) with (\ref{fkas1210}) where
$w(x,y,\xi)= v(x,\xi) \mathbh{1}(y\in(0,t]) /t $ we get
%
%e9.7 ###
%
\begin{equation}\label{fkas129}\quad
\sum_{x\in\bbZ} \int\cQ_0(d\xi)v(x,\xi)= \frac{1}{\cQ(0\in
\xi)}\sum_{x\in\bbZ} \int\cQ(d\xi)
v(x,{\tau}_x \xi)\mathbh{1}(x\in\xi) .
\end{equation}
At this point, we take
\[
v(x, \xi) := \mathbh{1} \bigl( x= x_0 ({\tau}_{-x}
\xi) \bigr) f({\tau}_{-x}\xi)
\]
[if $\xi= \varnothing$ we set $v(x, \xi)=0$].
Note that $v(x, {\tau}_x \xi) =\mathbh{1} ( x= x_0 ( \xi) ) f(\xi) $,
thus implying together with (\ref{fkas129}) that
%
%e9.8 ###
%
\begin{eqnarray}\label{rita1}
\int\cQ(d\xi) f(\xi)&=&\sum_{x\in\bbZ} \int\cQ(d\xi)
v(x,{\tau}_x \xi)\mathbh{1}(x\in\xi)\nonumber\\[-8pt]\\[-8pt]
&=& \cQ(0\in\xi)\sum_{x\in
\bbZ} \int
\cQ_0(d\xi)v(x,\xi) .\nonumber
\end{eqnarray}
In order to understand the last integral, take $\xi\in
\cN^{\bbZ}_0$. Then $ x= x_0 ({\tau}_{-x} \xi)$ if and only if
$0\leq-x
< x_1(\xi)$. Therefore, changing at the end $x$ into $-x$, we get
%
%e9.9 ###
%
\begin{equation}\label{rita2}\qquad
\sum_{x\in\bbZ} v(x,\xi)=
\mathop{\sum_{x\in\bbZ:}}_{0 \leq-x< x_1 (\xi)}
f({\tau}_{-x} \xi)= \sum_{x=0}^{x_1 (\xi)-1} f({\tau}_x\xi) \qquad
\forall\xi\in\cN^{\bbZ}_0 .
\end{equation}
Combining (\ref{rita1}) and (\ref{rita2}) we get the thesis.
\end{pf}

Taking $f=1$ in (\ref{zimbozeta}) we deduce
that $x_1 (\xi)$ must have finite mean w.r.t.~$\cQ_0 $. In
particular, if $\cQ_0$ is the law of the renewal SPP on $\bbZ$
containing the origin and with domain length $\mu$ [i.e.,
$\cQ_0$ is the law of $\operatorname{Ren}_0 (\mu)$], then $\mu$ must have
finite mean. On the other hand, given $\mu$ probability measure on
$\bbN_+$ with finite mean, identity (\ref{zimbozeta})
uniquely determines the probability measure~$\cQ$ if~$\cQ_0$ is
defined as the law of $\operatorname{Ren}(\mu\tc0)$. One can then prove
that the so-defined $\cQ$ is the law of a $\bbZ$-stationary SPP
and that $\cQ_0= \cQ(\cdot|0\in\xi)$. Finally, as in the continuous
case, relation (\ref{zimbozeta}) allows us to derive a simple
description of $\bbZ$-stationary renewal SPPs similar to the one
mentioned after Definition~\ref{fataturchina}. We leave the details
to the interested reader.

%s10 ###
\section{Exchangeable SPPs}\label{puntiniscambio}

We endow the
space ${\Omega}=(0,\infty)^{\bbN_+}$ of
%doubled--sided
sequences of positive numbers with the product topology, and we denote
by $\cB$ its Borel
$\s$-field. We write a~generic element of ${\Omega}$ as ${\omega
}=({\omega}_n \dvtx n \in
\bbN_+)$. Let $\cE_n$ be the $\s$-subfield generated by the events
that are invariant under permutations of $\bbZ$ fixing all points $x
\in\bbN_+$ with $x>n$. Let $\cE:= \bigcap_{n=1}^\infty\cE_n$ be the
\textit{exchangeable} $\s$-field. Since~${\Omega}$ is a standard Borel set,
given a~probability measure $Q$ on ${\Omega}$ there exists a~regular
conditional probability associated to $\cE$, that is, a measurable map
$\rho_Q\dvtx{\Omega} \times\cB\to[0,1]$ satisfying the following
properties:
\begin{longlist}
\item for each $A\in\cB$, $\rho_Q(\cdot,A)$ is a version of
$P(A|\cE)$;

\item for $Q$-a.e. ${\omega}\in{\Omega}$, $\rho_Q({\omega},
\cdot)$ is a
probability measure on $({\Omega}, \cB)$.
\end{longlist}

Due to de Finetti's theorem, if $Q$ is an exchangeable probability
measure on~${\Omega}$, then for $Q$-a.e. ${\omega}$ the\vadjust{\goodbreak}
measure $\rho_Q ({\omega}, \cdot)$ is a product probability measure on
${\Omega}$. The inverse implication is trivially true; hence de
Finetti's theorem provides a characterization of the exchangeable
probability measures on ${\Omega}$.

Suppose that $\cQ$ is a left-bounded exchangeable SPP containing
the
origin (see Definition \ref{baratto}). By definition, $\cQ$ has
support on the subspace
$\Xi\subset\cN$ given by the configurations $\xi\in\cN$ empty on
$(-\infty,0)$, containing the origin and given by a sequence of
points $x_k(\xi)$, $k\in\bbN$, diverging to $+\infty$. We can
define the measurable injective map $\Psi\dvtx\Xi\to{\Omega}$, with
$\Psi
(\xi)= {\omega}$ and ${\omega}_n= x_n(\xi)- x_{n-1}(\xi)$. We call
$Q$ the
measure $\cQ\circ\Psi^{-1}$. Trivially, $Q$ is an exchangeable
measure on ${\Omega}$; hence we can apply de Finetti's theorem and get
$ Q(A) = \int_{\Omega} Q(d{\omega}) \rho_Q({\omega}, A)$ for all
\mbox{$A\in\cB$}, where
$\rho_Q({\omega},\cdot)$ is a product probability measure. Since
trivially $\rho_Q({\omega},\cdot)$ has support on $\Psi(\Xi)$, the
pull-back of $\rho_Q({\omega}, \cdot)$ is a well-defined probability
measure on $\Xi$ corresponding to the law of $\operatorname{Ren}({\delta}_0,
\mu_{\omega})$. As
byproduct, we get
%
%e10.1 ###
%
\begin{equation}\label{decomposizione} \cQ(\cA)= \int_{\Omega}
Q(d{\omega})
\operatorname{Ren}({\delta}_0,
\mu_{\omega}) [\cA] ,\qquad \cA\subset\cN\mbox{ measurable} .
\end{equation}

The above decomposition allows us to extend our results stated in
Section~\ref{annarella} to right exchangeable SPPs containing the
origin. We give only some comments, leaving the details to the
interested reader. Consider, for example, the HCP starting from $\cQ$,
that is, $\xi^{(1)}(0)$ has law $\cQ$.
%Given $n\geq1$ and $t\in
%[0,\infty]$, recall the definition of $\mu^{(n)}_t$ given before
%Theorem \ref{teo2}.
By applying inductively Theorem \ref{teo1} we get that, given $n\geq
1$ and $t\in[0,\infty]$,
the law of~$\xi^{(n)}(t)$ conditioned to the fact that $0\in\xi^{(n)}(t)$
has the integral representation\looseness=-1
\[
\int_{{\Omega}} Q(d{\omega}) \operatorname{Ren}\bigl({\delta}_0,[\mu_{\omega
}]^{(n)}_t \bigr)
\]\looseness=-1
for a suitable probability measure $[\mu_{\omega}]^{(n)}_t$ on
$(0,\infty)$.

In particular, if each $\mu_{\omega}$ satisfies the limit
(\ref{limarrosto}) for some constant $c_0({\omega})$, we get the
following: fixed $k\geq1$, the rescaled random variable
$[x_k^{(n)}(0)- x_{k-1}^{(n) } (0)]/ d_{\min} ^{(n)}$, defined
for the HCP starting with law $\cQ$ and conditioned to the event
$\{0\in\xi^{(n)}(0)\}$, weakly converges to a random variable whose
Laplace transform $g^{(\infty)}$ is given by
\[
g^{(\infty)} (s)= \int_{{\Omega}} Q(d{\omega}) g^\infty
_{c_0({\omega}) } (s) ,
\]
where $g^\infty_{(c_0)}$ has been defined in (\ref{macedonia}). Note
that new limit laws emerge in this way.

%Recall the definition of the map $\Psi$ given after the definition
%probability measure on ${\Omega}$ is shift--invariant and has support
%on
%$\Psi(\cN^\infty_0)$. Moreover, the map $\Psi: \cN^\infty_0 \to
%above can be restated in a natural way in terms of Palm
%distributions on SPPs (.

Let us now pass to stationary exchangeable SPPs. One can formulate
de Finetti's theorem also for exchangeable laws on the space
${\Omega}'=(0,\infty)^\bbZ$ of two-sided sequences of positive numbers.
At the end we get that a stationary SPP, nonempty a.s. and with
finite intensity, is exchangeable if and only if its Palm
distribution $\cQ_0$ satisfies
%
%e10.2 ###
%
\begin{equation}\label{putrefatto} \cQ_0(\cA)= \int_{{\Omega}'}
Q(d{\omega})
\operatorname{Ren}_0( \mu_{\omega}) [\cA] ,\qquad \cA\subset\cN\mbox{
measurable} ,
\end{equation}
where (i) $\mu_{\omega}$ is a probability measure on $(0,\infty)$; (ii)
for any $\cA\subset\cN$ measurable the map ${\Omega}'\ni{\omega
}\to
\operatorname{Ren}_0( \mu_{\omega}) [\cA]$ is measurable (thus implying
that the
map ${\omega}\to\mu_{\omega}$ is measurable); (iii) $Q$ is the
image of the
law $\cQ$ of the SPP under the map $ \cN^\infty_0 \to
(0,\infty)^\bbZ$, mapping $\xi$ in $( x_{k}(\xi)-x_{k-1}(\xi)\dvtx
k\in
\bbZ)$ [recall~(\ref{camilloha3figlie})].

Using (\ref{putrefatto}) and (\ref{scimmiazimbo}) we conclude that
%
%e10.3 ###
%
\begin{equation}\label{eccoloarriva!} \cQ(\cA)= \int_{{\Omega}'}
Q(d{\omega})
\operatorname{Ren}( \mu_{\omega}) [\cA] ,\qquad \cA\subset\cN\mbox{
measurable} .
\end{equation}
The above decomposition of $\cQ$ allows us to extend our limit theorems
to the HCP starting with law $\cQ$, that is, from a stationary
exchangeable SPPs.
In particular,
$\xi^{(n)}(t)$ will be a stationary exchangeable SPP for all $n\geq
1$ and all $t\in[0,\infty]$. In addition, for $k\not=1$, as $n\to
\infty$ the rescaled random variable $[x_k^{(n)}(0)- x_{k-1}^{(n)
} (0)]/ d_{\min} ^{(n)}$ weakly converges to the random variable~$Z^{(\infty)} _1$ introduced in Theorem \ref{teo2}.

%s11 ###
\section{A combinatorial lemma on exchangeable probability measures}\label{apppp}
The next combinatorial lemma has been used in Section
\ref{fondamenta}.

\begin{Lemma}\label{lemexchangeable}
Let $m_k$ be an exchangeable probability measure on $S^k$,
$S=(0,\infty)$; that is, $m_k$ is left invariant by any permutation of
the coordinates $(s_1,\ldots,s_k) \in S^k$. Call $m$ the marginal of
$m_k$ along a coordinate (it does not depend on the coordinate).
Then, for any
bounded function $f\dvtx S\to\mathbb{R}$, and any bounded function
$g\dvtx S\to
(0,\infty)$, it holds
\begin{eqnarray*}
&&\mbox{\textup{(a)}}\quad\mathbb{E}_{m_k} \Biggl( \frac{g (s_1) }{g (s_1) + \cdots+
g(s_{k}) } \prod_{i=2}^{k-1} \frac{ g( s_i) }{ \sum_{j=i}^{k-1} g(
s_j) } \prod_{i=1}^{k} f(s_i)\Biggr) = \frac{ \mathbb{E}_m
(f)^k}{k \cdot(k-2)!},
\\
&&\mbox{\textup{(b)}}\quad\hspace*{130.3pt}\mathbb{E}_{m_k} \Biggl( \prod_{i=1}^{k} \frac{ g( s_i) f(s_i)}{
\sum_{j=i}^{k} g( s_j) } \Biggr) = \frac{ \mathbb{E}_m
(f)^k}{k!} .
\end{eqnarray*}
\end{Lemma}
\begin{pf}%[Proof of Lemma \ref{lemexchangeable}]
We will give only the proof of point (a) which is a bit harder.
The proof of point (b) follows essentially the same lines; details
are left to the reader.

Since the law $m_k$ is left invariant by any permutations of the
coordinates $(s_1,\ldots,s_k) \in S^k$, we have
\begin{eqnarray*}
&&\mathbb{E}_{m_k} \Biggl( \frac{g (s_1) }{g (s_1) + \cdots+
g(s_{k}) }
\prod_{i=2}^{k-1} \frac{ g( s_i) }{ \sum_{j=i}^{k-1} g( s_j) } \prod
_{i=1}^{k} f(s_i)\Biggr) \\
&&\qquad = \frac{1}{k!} \sum_{\sigma\in\mathcal{S}_k}
\mathbb{E}_{m_k} \Biggl( \frac{g (s_{\sigma(1)}) }{g (s_{\sigma(1)})
+ \cdots+ g(s_{\sigma(k)}) }
\prod_{i=2}^{k-1} \frac{ g( s_{\sigma(i)}) }{ \sum_{j=i}^{k-1} g(
s_{\sigma(j)}) } \prod_{i=1}^{k} f\bigl(s_{\sigma(i)}\bigr) \Biggr) \\
&&\qquad = \frac{1}{k!} \mathbb{E}_{m_k} \Biggl( f(s_1)\cdots
f(s_k)\sum_{\sigma\in\mathcal{S}_k} \frac{g (s_{\sigma(1)}) }{g
(s_1) + \cdots+ g(s_{k}) } \prod_{i=2}^{k-1} \frac{ g(
s_{\sigma(i)}) }{ \sum_{j=i}^{k-1} g( s_{\sigma(j)}) } \Biggr),
\end{eqnarray*}
where $\mathcal{S}_k$ stands for the symmetric group of
$\{1,\ldots,k\}$.
Hence
the result will follow from the identity
%
%e11.1 ###
%
\begin{equation} \label{eqidentity}
\sum_{\sigma\in\mathcal{S}_k} \frac{g (s_{\sigma(1)}) }{g (s_1) +
\cdots+ g(s_{k}) } \prod_{i=2}^{k-1} \frac{ g( s_{\sigma(i)})}{
\sum_{j=i}^{k-1} g( s_{\sigma(j)}) } = k-1
\end{equation}
and the product structure of $m_k$. Now we prove
(\ref{eqidentity}). Divide the sum in~(\ref{eqidentity})
depending on the value of $\sigma(1)$ and $\sigma(k)$
\[
\mbox{l.h.s. of  (\ref{eqidentity})} = \sum_{i_1=1}^k
\frac{g (s_{i_1}) }{g (s_1) + \cdots+ g(s_{k}) }
\mathop{\sum_{i_k=1}}_{i_k \neq i_1}^k
\mathop{\mathop{\sum_{\sigma\in\mathcal{S}_k:}}_{
\sigma(1)=i_1 }}_{\sigma(k)=i_k} \prod_{i=2}^{k-1} \frac{ g(
s_{\sigma(i)})}{ \sum_{j=i}^{k-1} g( s_{\sigma(j)}) } .
\]
The thesis will follow from the fact that, for any $i_1, i_k$, the
last sum in the latter is equal to 1. Equivalently, we need to prove
that, for any $n$,
%
%e11.2 ###
%
\begin{equation} \label{eqidentity2}
\sum_{\sigma\in\mathcal{S}_n} \prod_{i=1}^{n}
\frac{g(s_{\sigma(i)})}{ \sum_{j=i}^{n} g( s_{\sigma(j)} ) } = 1 .
\end{equation}
This is done by induction. Indeed, the thesis is trivial for $n=1$.
Assume that (\ref{eqidentity2}) holds at rank $n-1$. Then,
\begin{eqnarray*}
&&\sum_{\sigma\in\mathcal{S}_n} \prod_{i=1}^{n}
\frac{g(s_{\sigma(i)})}{ \sum_{j=i}^{n} g( s_{\sigma(j)} ) } \\
&&\qquad =
\sum_{i_1=1}^n \mathop{\sum_{\sigma\in\mathcal{S}_n:}}_{ \sigma
(1)=i_1} \prod_{i=1}^{n} \frac{g(s_{\sigma(i)})}{ \sum_{j=i}^{n} g(
s_{\sigma(j)} ) } \\
&&\qquad = \sum_{i_1=1}^n \frac{g(s_{i_1})}{g(s_1)+g(s_2)+\cdots+g(s_n)}
\mathop{\sum_{\sigma\in\mathcal{S}_n:}}_{
\sigma(1)=i_1} \prod_{i=2}^{n}
\frac{g(s_{\sigma(i)})}{ \sum_{j=i}^{n} g( s_{\sigma(j)} ) } .
\end{eqnarray*}
Note that,
% the second sum does not depend on the value of $i_1$.Moreover,
by the induction hypothesis, the second sum is equal to $1$ (for
any $i_1$). Hence,
\[
\sum_{\sigma\in\mathcal{S}_n} \prod_{i=1}^{n}
\frac{g(s_{\sigma(i)})}{ \sum_{j=i}^{n} g( s_{\sigma(j)} ) } =
\sum_{i_1=1}^n \frac{g(s_{i_1})}{g(s_1)+g(s_2)+\cdots+g(s_n)} =1 .
\]
This ends the proof of (\ref{eqidentity2}) and thus of point (a).
As already mentioned the proof of point (b) is easier [only
(\ref{eqidentity2}) has to be used]; details are left to the
reader.
\end{pf}
\end{appendix}

\makeatletter\write@toc@ignorecontentsline\makeatother
\section*{Acknowledgments}
We thank Marco Ribezzi Crivellari and Fran\c{c}ois Si\-menhaus for
useful discussions and the Laboratoire de Probabilit\'{e}s et
Mod\`{e}\-les Al\'{e}atoires of the University Paris VII and the
Department of Mathematics of the University of Roma Tre for the support and
the kind hospitality.
\makeatletter\write@toc@restorecontentsline\makeatother

%suskaldyti doi

% imsref loaded by lrinkeviciute, 2011-05-03 13:18:31
%
% imsref loaded by lrinkeviciute, 2011-05-05 10:22:27

%
\printaddresses

\end{document}